\numberwithin{equation}{section}
\newcommand{\Real}{\mathbb R}
\newcommand{\T}{\mathbb T}
\newcommand{\Integers}{\mathbb Z}
\newcommand{\Integer}{\mathbb Z}
\newcommand{\norm}[1]{\left\lVert #1 \right\rVert}
\newcommand{\abs}[1]{\left\vert#1\right\vert}
\newcommand{\set}[1]{\left\{#1\right\}}
\newcommand{\grad}{\nabla}
\newcommand{\G}{\mathcal{G}}
\newcommand{\R}{\mathcal{R}}
\newcommand{\I}{\mathbf{I}}
\newcommand{\Naturals}{\mathbb N}
\newcommand{\jap}[1]{\left\langle #1 \right\rangle} 
\newcommand{\Torus}{\mathbb T}
\newcommand{\RE}{\textbf{Re}}
\newtheorem{theorem}{Theorem}
\theoremstyle{definition}
\newtheorem{remark}{Remark}
\theoremstyle{lemma}
\newtheorem{proposition}{Proposition}[section]
\theoremstyle{definition}
\theoremstyle{lemma}
\newtheorem{lemma}{Lemma}[section]
\begin{document}

\title{Enhanced dissipation and inviscid damping in the inviscid limit of the Navier-Stokes equations near the 2D Couette flow} 
\author{Jacob Bedrossian\footnote{\textit{jacob@cscamm.umd.edu}, University of Maryland, College Park} \, and Nader Masmoudi\footnote{\textit{masmoudi@cims.nyu.edu}, Courant Institute of Mathematical Sciences} \, and Vlad Vicol\footnote{\textit{vvicol@math.princeton.edu}, Princeton University}}

\date{\today}
\maketitle

\section*{Abstract} 
In this work we study the long time, inviscid limit of the 2D Navier-Stokes equations near the periodic Couette flow, and in particular, we confirm at the nonlinear level the qualitative behavior predicted by Kelvin's 1887 linear analysis.
At high Reynolds number $\textbf{Re}$, we prove that the solution behaves qualitatively like 2D Euler for times $t \lesssim \textbf{Re}^{1/3}$, and in particular exhibits inviscid damping (e.g. the vorticity weakly approaches a shear flow). 
For times $t \gtrsim \textbf{Re}^{1/3}$, {which is sooner than the natural dissipative time scale $O(\textbf{Re})$}, the viscosity becomes dominant and the streamwise dependence 
of the vorticity is rapidly eliminated by an enhanced dissipation effect.
 Afterward, the remaining shear flow decays on very long time scales $t \gtrsim \textbf{Re}$ back to the Couette flow. 
When properly defined, the dissipative length-scale in this setting is $\ell_D \sim \textbf{Re}^{-1/3}$, larger than the scale $\ell_D \sim \textbf{Re}^{-1/2}$ predicted in classical Batchelor-Kraichnan 2D turbulence theory.
The class of initial data we study is the sum of a sufficiently smooth function and a small (with respect to $\RE^{-1}$) $L^2$ function. 

\bigskip

\setcounter{tocdepth}{1}
{\small\tableofcontents}

\section{Introduction} 

Consider the 2D incompressible Navier-Stokes equations in 
the vicinity of the Couette flow:
\begin{equation} \label{def:NSE}
\left\{
\begin{array}{l}
  \omega_t + y\partial_x\omega + U \cdot \grad \omega = \nu\Delta \omega, \\ 
  U  = \grad^{\perp}\Delta^{-1} \omega,  \\\omega(t=0) =\omega_{in}. 
\end{array}
\right. 
\end{equation}
Here, $t\in \Real_+$, $(x,y) \in \mathbb \Torus \times \Real$, $\grad^\perp = (-\partial_y,\partial_x)$,  $U= (U^x,U^y)$ is the velocity, and $\omega$ is the vorticity of the {\em perturbation} to the Couette flow. 
The physical velocity is $(y,0) + U$ and the total vorticity of the flow is $-1 + \omega$. 
We denote the streamfunction of the perturbation by $\psi = \Delta^{-1}\omega$. 
We have already non-dimensionalized \eqref{def:NSE} by normalizing the Couette flow to $(y,0)$ and the side-length of the torus to $2\pi$, which together with the kinematic viscosity of the fluid, sets the Reynolds number $\RE$. We use the notation $\nu = \textbf{Re}^{-1}$. 
If $\nu = 0$, equivalently $\textbf{Re} = \infty$, the fluid is called \emph{inviscid} and \eqref{def:NSE} corresponds to the incompressible 2D Euler equations.
The velocity satisfies the momentum equation: 
\begin{equation} \label{def:NSEMomentum}
\left\{
\begin{array}{l}
  U_t + y\partial_x U + (U^y,0) + U \cdot \grad U = -\grad P + \nu \Delta U, \\ 
  \grad \cdot U = 0, 
\end{array}
\right. 
\end{equation}
where $P$ denotes the pressure. As we are interested in the behavior of solutions to \eqref{def:NSE} for high Reynolds number, without loss of generality we assume throughout the paper that $\nu \in (0,1]$.

The study of \eqref{def:NSE} for small perturbations is an old problem in hydrodynamic stability, 
considered by both Rayleigh \cite{Rayleigh80} and Kelvin \cite{Kelvin87}, as well as by many modern authors with new perspectives (see e.g. the classical texts \cite{DrazinReid81,Yaglom12} and the references therein). 
Rayleigh and Kelvin both studied the linearization of \eqref{def:NSE}, which is simply  
\begin{align} \label{def:2DNSE_Linear} 
\left\{
\begin{array}{l}
\partial_t \omega + y\partial_x \omega = \nu \Delta \omega, \\ 
\Delta \psi = \omega, \\ 
\omega(0) = \omega_{in}. 
\end{array} 
\right. 
\end{align}
Rayleigh first proved spectral stability (the absence of unstable eigenmodes) and Kelvin went further to explicitly solve \eqref{def:2DNSE_Linear}. 
Indeed, if we denote by $\hat{\omega}(t,k,\eta)$ the Fourier transform of $\omega(t,x,y)$ (note $k \in \Integer$, $\eta \in \Real$), then it is straightforward to verify Kelvin's solution
\begin{subequations} \label{eq:KelvinSoln}
\begin{align} 
\hat{\omega}(t,k,\eta) & = \hat{\omega}_{in}(k,\eta+kt)\exp\left[-\nu\int_0^t\abs{k}^2 + \abs{\eta - k\tau + kt}^2 d\tau \right], \\
\hat{\psi}(t,k,\eta) & = -\frac{\hat{\omega}_{in}(k,\eta+kt)}{k^2 + \eta^2}\exp\left[-\nu\int_0^t\abs{k}^2 + \abs{\eta - k\tau + kt}^2 d\tau \right]. \label{eq:psiKelvin}
\end{align}  
\end{subequations}
From \eqref{eq:KelvinSoln} we see directly that if $k\neq 0$ (that is, the mode depends on $x$), 
the mode is strongly damped by the viscosity by the time $t \gtrsim \nu^{-1/3}$, 
whereas for $k = 0$ the evolution is that of the 1D heat equation with a slower $t \gtrsim \nu^{-1}$ dissipative time-scale; hence the $k \neq 0$ modes experience \emph{enhanced dissipation}.   
For shorter times, the behavior of \eqref{eq:KelvinSoln} essentially matches that of the inviscid problem, studied in detail by Orr in \cite{Orr07}. 
One of Orr's key observations was the \emph{inviscid damping} predicted by \eqref{eq:psiKelvin}, that is, even for $\nu = 0$,   \eqref{eq:psiKelvin} shows that if the vorticity is sufficiently regular, then the velocity field $U = \grad^\perp \psi$ will return to a shear flow at an algebraic rate.
Both inviscid damping and enhanced dissipation are manifestations of the same \emph{vorticity mixing} induced by the background shear flow, which gives rise to a linear-in-time (for each $k \neq 0$) transfer of enstrophy to high frequencies (as seen in \eqref{eq:KelvinSoln}).
Together, the effects can be summarized by the following. 
\begin{proposition}[Linearized behavior (Kelvin \cite{Kelvin87} and Orr \cite{Orr07}) ] \label{prop:LinBehave}
Let $\omega(t)$ be a solution to \eqref{def:2DNSE_Linear} with initial data $\omega_{in} \in H^\sigma$ for any $\sigma \geq 3$ such that $\int \omega_{in} dx dy = 0$ and $\int \abs{y \omega_{in}} dy dx < \infty$. Then from \eqref{eq:KelvinSoln} we have for some $c > 0$ (writing also $(U^x,U^y) = \grad^\perp \psi$)\begin{subequations} 
\begin{align} 
\norm{P_{\neq 0}\omega(t,x+ty,y)}_{H^\sigma} & \lesssim \norm{\omega_{in}}_{H^\sigma} e^{-c \nu t^3 } \label{ineq:LinearMixing} \\ 
\norm{P_{0}\omega(t,x,y)}_{H^\sigma} & \lesssim \frac{\norm{\omega_{in}}_{H^\sigma} + \norm{\omega_{in}}_1}{\jap{\nu t}^{1/4}} \\ 
\norm{P_{\neq 0} U^x(t)}_2 + \jap{t} \norm{U^y(t)}_2 & \lesssim \frac{\norm{\omega_{in}}_{H^\sigma} e^{-c \nu t^3 }}{\jap{t}}. \label{ineq:UlinDamp}
\end{align}
\end{subequations}
where all implicit constants are independent of $\nu$ and $t$ and we are defining the orthogonal projections to $x$-independent modes 
$P_0 g = \frac{1}{2\pi} \int g(x,y) dx$ and $x$-dependent modes  $P_{\neq 0}g = g - \frac{1}{2\pi} \int g(x,y) dx$.
\end{proposition}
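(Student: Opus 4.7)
All three estimates will follow by direct inspection of Kelvin's explicit formula \eqref{eq:KelvinSoln}. The preparatory algebraic observation is that, by completing the square in $\eta$,
\[
\int_0^t \bigl(k^2 + (\eta - k\tau)^2\bigr)\,d\tau \;=\; k^2 t \;+\; t\bigl(\eta - \tfrac{kt}{2}\bigr)^2 \;+\; \tfrac{k^2 t^3}{12}
\]
for every $k\in\Integer$ and $\eta\in\Real$. In particular, for $k\neq 0$ the exponential factor in \eqref{eq:KelvinSoln} is dominated by $e^{-c\nu t^3}$ with $c=1/12$ uniformly in $\eta$; this $\eta$-free lower bound is the source of the enhanced dissipation and will be invoked in all three estimates.

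For \eqref{ineq:LinearMixing} I would set $g(t,x,y) := \omega(t, x+ty, y)$ and verify by a direct change of variables in the Fourier transform that $\hat g(t,k,\eta) = \hat\omega(t,k,\eta-kt)$, which by \eqref{eq:KelvinSoln} equals $\hat\omega_{in}(k,\eta)\exp\bigl[-\nu\int_0^t(k^2 + (\eta - k\tau)^2)\,d\tau\bigr]$. Plancherel combined with the bound above yields \eqref{ineq:LinearMixing} immediately. For the $P_0$ bound, the $k=0$ mode is the 1D heat semigroup in Fourier: $\hat\omega(t,0,\eta) = \hat\omega_{in}(0,\eta)\,e^{-\nu \eta^2 t}$. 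The hypotheses $\omega_{in}\in H^\sigma$ and $y\omega_{in}\in L^1$ imply $\omega_{in}\in L^1(\Torus\times\Real)$ (split by $|y|\leq 1$, using $H^\sigma\hookrightarrow L^2$, and $|y|\geq 1$, using $|\omega_{in}|\leq |y\omega_{in}|$), so $|\hat\omega_{in}(0,\eta)|\lesssim \|\omega_{in}\|_{L^1}$ uniformly in $\eta$. Splitting the $\eta$-integral in $\|P_0\omega(t)\|_{H^\sigma}^2$ at $|\eta|=1$ then produces the $\jap{\nu t}^{-1/4}$ decay: on $|\eta|\leq 1$ the weight $(1+\eta^2)^\sigma$ is bounded and $\int_{|\eta|\leq 1}e^{-2\nu \eta^2 t}\,d\eta\lesssim \jap{\nu t}^{-1/2}$, whereas on $|\eta|\geq 1$ the factor $e^{-2\nu \eta^2 t}\leq e^{-2\nu t}$ is paired with the $H^\sigma$ Plancherel bound.

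The velocity estimate \eqref{ineq:UlinDamp} uses Biot--Savart, $\hat U^x = i\eta\hat\psi$ and $\hat U^y = -ik\hat\psi$, and \eqref{eq:psiKelvin}. After the substitution $\eta' = \eta + kt$ the problem reduces to bounding
\[
\sum_{k\neq 0}\int \frac{|\hat\omega_{in}(k,\eta')|^2}{k^2 + (\eta'-kt)^2}\,d\eta' \qquad\text{and}\qquad \sum_{k\neq 0}\int \frac{k^2\,|\hat\omega_{in}(k,\eta')|^2}{\bigl(k^2 + (\eta'-kt)^2\bigr)^2}\,d\eta',
\]
each carrying the $e^{-c\nu t^3}$ factor. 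I expect this to be the main technical step, because the weights are small only when $\eta'$ is far from $kt$: the Fourier support of $\omega_{in}$ is only ``missed'' by the shear-induced shift on part of frequency space. I will resolve it by splitting the $\eta'$-integration at $|\eta'| = |k|t/2$. On $|\eta'|\leq |k|t/2$ one has $|\eta' - kt|\geq |k|t/2\geq t/2$, producing an $\eta'$-uniform gain $\lesssim t^{-2}$ in the first integrand and $\lesssim t^{-4}$ in the second, which combines with Plancherel on $\|\omega_{in}\|_{L^2}$. On the complementary region $|\eta'|\geq |k|t/2\geq t/2$ the weights are merely bounded, but here the assumption $\sigma\geq 3$ saves the day: $|\hat\omega_{in}(k,\eta')|^2 \leq t^{-2\sigma}\jap{\eta'}^{2\sigma}|\hat\omega_{in}(k,\eta')|^2$, whose integral is at most $t^{-2\sigma}\|\omega_{in}\|_{H^\sigma}^2$ and decays faster than any needed rate. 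Together with the $e^{-c\nu t^3}$ factor this yields \eqref{ineq:UlinDamp}.
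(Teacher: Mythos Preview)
Your proposal is correct and follows exactly the approach the paper intends: the proposition is stated as a direct consequence of Kelvin's explicit formula \eqref{eq:KelvinSoln}, and the paper gives no further details beyond that, so your write-up simply supplies the computation the paper leaves implicit. Your completion of the square yielding the $\tfrac{k^2t^3}{12}$ term, the change of variables $\eta'=\eta+kt$, and the splitting at $|\eta'|=|k|t/2$ for the velocity estimates are the natural steps and all check out.
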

There are several details of Proposition \ref{prop:LinBehave} that warrant attention. 
First, is the requirement of three derivatives, which is necessary to deduce \eqref{ineq:UlinDamp}. 
Indeed, in order to get the inviscid algebraic decay from \eqref{eq:psiKelvin}, one requires localization of the Fourier transform of the initial data. Physically, this is to control the amount of information which is being \emph{un-mixed} by the Couette flow (see below and \cite{BM13} for further discussion). 
Secondly, notice that the regularity estimates in \eqref{ineq:LinearMixing} are only after pulling back by the Couette flow characteristics, another evidence of the mixing and a common theme in \cite{MouhotVillani11,BM13,BMM13}. Note further that this implies that in general, if one does not pull back by the shear flow we have $\norm{\omega(t)}_{H^\sigma} \approx \jap{t}^\sigma$ for $\nu t^3 \ll 1$. Thirdly, notice that the $x$-dependent contribution to the vorticity decays rapidly in \eqref{ineq:LinearMixing} after $\nu t^3 \gg 1$ whereas the $x$-independent contribution takes much longer to decay. Finally, the reason that $c < 1/3$ in the above theorem is due to the fact that the enhanced dissipation is anisotropic in time and frequency, as can be seen in the degeneracy of the exponentials in \eqref{eq:KelvinSoln}. Like the loss of regularity, this is also due to a transient un-mixing effect. 

In our work we study the long time, inviscid limit of `small' solutions to \eqref{def:NSE} and seek to find the appropriate nonlinear analogue of Proposition~\ref{prop:LinBehave}. 
The primary motivation of our work is to study the enhanced dissipation by mixing exhibited in \eqref{eq:KelvinSoln}. 
A second, deeply related, motivation for our work is to affirm the physical relevance of the inviscid ($\nu = 0$) work completed by two of the authors in \cite{BM13}, where inviscid damping is exhibited in the Euler equations.  

The enhanced dissipation effect is sometimes referred to by modern authors as the `shear-diffusion mechanism' 
and has been explicitly or implicitly pointed out by numerous authors studying shear flows \cite{RhinesYoung83,LatiniBernoff01,Chapman02,BeckWayne11}, vortex axisymmetrization \cite{BernoffLingevitch94,Gilbert88,Gilbert1993,Bajer2001} and 3D strained vortex filaments \cite{Lundgren82}.  
The work of \cite{ConstantinEtAl08} considers in a more general framework the connection between the spectral properties of the background flow and the enhanced dissipation effect in the context of passive tracers using the RAGE theorem (see also \cite{BerestyckiHamelNadirashvili05} for the elliptic case). 
It is pointed out in \cite{BeckWayne11} that the effect can be linked to a form of `hypocoercivity'~\cite{villani2009}.
Moreover, it is related to Taylor dispersion \cite{Taylor1953}, as discussed in e.g. \cite{RhinesYoung83}, 
which is not present in our work due to the periodic boundary conditions. 
Certain aspects of anomalous diffusions due to underlying drift have been studied in the context of SDEs; see for example \cite{IyerNovikov14,YoungPumirPomeau89,HaynesVanneste14,CardosoTabeling88}. 

At high Reynolds number, the linear solution \eqref{eq:KelvinSoln} exhibits a constant flux of enstrophy to high frequencies, independent of the Reynolds number, for times $t\lesssim \textbf{Re}^{1/3}$. 
In \cite{BM13} it is shown that this enstrophy flux persists in the Euler equations. Here, 
 we show that this enstrophy flux is stable in the inviscid limit, and moreover gives rise to the enhanced dissipation. 
It is suggested in \cite{Lundgren82,Gilbert88,Gilbert1993,GilbertBassom98}, that the inviscid limit of this enhanced dissipation effect is important for understanding the fine scale features of turbulent flows predicted in Kolmogorov~\cite{Kolmogorov1941} and Batchelor/Kraichnan~\cite{Batchelor1969,Kraichnan67} theories. 
However, notice that the constant enstrophy flux in \eqref{eq:KelvinSoln} is {\em more efficient} than a {\em generic turbulent flow} at moving enstrophy to small scales (where it is annihilated by the diffusion).  
Indeed, one can see this by considering the associated dissipative length scale $\ell_D$. 
Since the enstrophy transfer is linear in time, we see that in fact $\ell_D \sim \RE^{-1/3}$ which is a much larger scale than the $\textbf{Re}^{-1/2}$ predicted by the Kraichnan statistical theory. Partly, this is due to the fact that the enstrophy transfer considered here is highly anisotropic, and so should not necessarily be in the same scaling regime as homogeneous, isotropic turbulence.

It is well known that studying the inviscid limit $\nu \to 0$ and setting $\nu = 0$ in \eqref{def:NSE} is not the same problem.
Theorem \ref{thm:Main} below (and \cite{BM13}) implies that the 2D Euler equations qualitatively predict the correct behavior of \eqref{def:NSE} for times $t \lesssim \textbf{Re}^{1/3}$, at which point they cease to do so.  This time interval is much longer than what follows from brute force energy estimates, but still much shorter than for completely `laminar' flows, which would behave qualitatively like 2D Euler until $t \sim \textbf{Re}$. We emphasize that, {in general}, results which hold for the Euler evolution need not necessarily hold for the Navier-Stokes equations at high Reynolds number.
 For instance, in the presence of boundaries it was shown in~\cite{GrenierGuoNguyen14a}   that 
a linearly spectrally stable Euler flow becomes unstable upon the addition of dissipation at large Reynolds number (see also~\cite{DrazinReid81}). 
Also, the recent constructions of non-unique energy dissipative solutions of the 3D Euler equations~\cite{deLSzeHoldCts,DeLellisSzekelihidiBull,isett,buckDeLSzeOnsCrit} are not known to have an analogue in the presence of dissipation~\cite{BTW12}, with or without boundaries.

One of the consequences of our work is that \eqref{def:NSE} will exhibit inviscid damping for $t \lesssim \textbf{Re}^{1/3}$. 
See \cite{BM13} and the references therein for an in-depth discussion of the phenomenon. 
The term `inviscid damping' appeared after it was noticed to be the hydrodynamic analogue of \emph{Landau damping} \cite{Landau46,Ryutov99,CagliotiMaffei98,HwangVelazquez09,MouhotVillani11,BMM13} in the kinetic description of plasmas which are sufficiently collisionless (the analogue of inviscid) to be well-described by the Vlasov equations. See e.g. \cite{BouchetMorita10,SchecterEtAl00,Briggs70,BM95,MouhotVillani11,BMT13,BM13} and the references therein for some discussion about the connections between inviscid damping and Landau damping and  
the more general concept of phase mixing. 

Orr himself pointed out the main subtlety inherent with inviscid damping and Landau damping: in \eqref{eq:KelvinSoln}, vorticity can just as easily unmix to large scales and create growth in the velocity field as it can mix to small scales and create decay; together the two effects are known as the \emph{Orr mechanism}. 
Indeed, if one takes initial data concentrated near frequencies $\eta,k > 0$ with $\eta$ large relative to $k$, one can observe from \eqref{eq:KelvinSoln} that the stream-function reaches a maximum amplitude near the \emph{critical time} $t \sim \eta/k$. 
Even on the linear level, this implies that one needs to \emph{pay regularity} to deduce decay of the velocity, in order to control the amount of information that is being unmixed. 
    See \cite{Boyd83,Lindzen88,BM13} for more discussion of the Orr mechanism. For similar reasons, the enhanced dissipation slows down near the critical times as the enstrophy passes through the $O(1)$ length scales.
 Hence, it is a serious over-simplification to imagine the enhanced dissipation to be equivalent to replacing the $\nu$ in front of the $\partial_{xx}$ on the RHS of \eqref{def:NSE} with $\nu^{1/3}$; it is acting differently for each frequency at each time.
Our methods will deal with this effect by designing a semi-norm to measure the solution which respects the frequency-by-frequency anisotropy and allows to gradually pay regularity for enhanced dissipation of the vorticity; see \S\ref{sec:Proof}. 

The Orr mechanism is known to interact poorly with the nonlinear term, creating a weakly nonlinear effect referred to as an \emph{echo}. 
 In nearly collisionless plasmas, echoes were captured experimentally in \cite{MalmbergWharton68} and in 2D Euler much later in \cite{YuDriscoll02,YuDriscollONeil} (see also \cite{VMW98,Vanneste02}). 
The echoes can potentially chain into a cascade and greatly amplify the regularity loss already present in the linear theory (see e.g. \cite{MouhotVillani11,BM13}), which is the reason that the nonlinear results in \cite{CagliotiMaffei98,HwangVelazquez09,MouhotVillani11,BM13,BMM13} all require at least Gevrey class regularity \cite{Gevrey18} (the work of \cite{FaouRousset14} is the exception, as the model studied therein does not support infinite echo cascades).
For \eqref{def:NSE}, we will not need the data to be Gevrey class in a qualitative sense (which would be somewhat non-physical for a model involving a dissipative effect), however, the high regularity requirement will come as a quantitative requirement that the data be $L^2$ close to Gevrey class (see Theorem \ref{thm:Main} below). 
Specifically, Gevrey-$\frac{1}{s}$ with {$s \in(1/2,1)$} data with a distance in $L^2$ that diminishes as a function of Reynolds number, eventually collapsing to a neighborhood in Gevrey-$\frac{1}{s}$ in the inviscid limit (consistent with \cite{BM13}). 
In view of the regularity requirement needed to obtain inviscid damping in 2D Euler,  
the type of initial data we consider is natural as the $L^2$ piece must be small enough to have negligible influence above the dissipative length scale (although it is not clear whether or not the diameter of the $L^2$ neighborhood in \eqref{ineq:IDassump} is optimal). 
The Gevrey-$\frac{1}{s}$ norm is given by
\[
\norm{f}_{\G^{\lambda;s}}^2 = \sum_{k}\int \abs{\hat{f}_k(\eta)}^2 e^{2\lambda\abs{k,\eta}^s} d\eta
\]
where $\hat{f}_k(\eta)$ is the Fourier transform of $f$ at frequency $(k,\eta)$.

\subsection{Statement of Theorem} \label{sec:Statement}
We first state the main result, and then follow the statement by remarks, a discussion and a brief summary of the proof. 
\begin{theorem} \label{thm:Main}
For all $s \in (1/2,1)$, $\lambda > \lambda^\prime > 0$, $\delta > 0$ and all integers $\alpha \geq 1$ there exists $\epsilon_0 = \epsilon_0(\alpha,s,\lambda,\lambda^\prime,\delta)$ and $K_0 = K_0(s,\lambda,\lambda^\prime,\delta)$ such that if $\omega_{in}^\nu$ is mean-zero and satisfies $\omega_{in}^{\nu} = \omega^\nu_{S,in} + \omega^\nu_{R,in}$ with 
\begin{align}
\norm{|\nabla|^{-1} \omega^\nu_{in}}_{2}   + \norm{\omega^\nu_{S,in}}_{\G^{\lambda;s}} + e^{K_0\nu^{-\frac{3(1+\delta)s}{2(1-s)}}}  \norm{\omega^\nu_{R,in}}_{2} = \epsilon \leq \epsilon_0, \label{ineq:IDassump}
\end{align} 
then for all $\nu$ sufficiently small (independent of $\epsilon$ and $\epsilon_0$) the solution $\omega^{\nu}(t)$ with initial data $\omega_{in}^\nu$ satisfies the following properties (with all implicit constants independent of $\nu$, $t$ and $\epsilon$), 
\begin{itemize} 
\item[(i)] the uniform (in $t$ and $\nu$) estimate 
\begin{equation} \label{main-omega} 
\norm{\omega^\nu(t,x + ty + \Phi(t,y),y)}_{\G^{\lambda^\prime;s}} \lesssim \epsilon
\end{equation} 
where $\Phi(t,y)$ is given explicitly by 
\begin{align} 
\Phi(t,y) = \int_0^t e^{\nu (t-\tau)\partial_{yy}} \left(\frac{1}{2\pi}\int_\T U^x(\tau,x,y) dx\right) d\tau;  \label{def:phi}
\end{align}
\item[(ii)] inviscid damping on inviscid time-scales: for $t \lesssim \nu^{-1/3}$,   
\begin{subequations} \label{main-omega-inv} 
\begin{align} 
\norm{\frac{d}{dt}\left[\omega^\nu(t,x + ty + \Phi(t,y),y)\right]}_{\G^{\lambda^\prime;s}} & \lesssim \frac{\epsilon^2}{\jap{t}^2} + \epsilon \nu \jap{t}^2, \label{ineq:scattering} \\
\norm{P_{\neq 0}U^x(t)}_2 + \jap{t}\norm{U^y(t)}_{2} & \lesssim \frac{\epsilon}{\jap{t}}, \label{ineq:ID1} \\ 
\norm{\frac{d}{dt}P_{0}U^x(t)}_{\G^{\lambda^\prime;s}} & \lesssim \frac{\epsilon^2}{\jap{t}^3} + \epsilon \nu \label{ineq:P0Ux};
\end{align} 
\end{subequations} 
\item[(iii)] enhanced dissipation for $x$-dependent modes on the fast viscous time-scale: for $t \gtrsim \nu^{-1/3}$, 
\begin{subequations} 
\begin{align} 
\norm{P_{\neq 0}\omega^\nu(t,x + ty + \Phi(t,y),y)}_{\G^{\lambda^\prime;s}} & \lesssim \frac{\epsilon}{\jap{\nu t^3}^\alpha}, \label{ineq:thmED} \\ 
\norm{P_{\neq 0}U^x(t)}_2 + \jap{t}\norm{U^y(t)}_{2} & \lesssim \frac{\epsilon}{\jap{t}\jap{\nu t^3}^\alpha}; \label{ineq:ID2}.
\end{align} 
\end{subequations}
\item[(iv)] and standard viscous decay for $x$-independent modes on the slow viscous time-scale: for $t \gtrsim \nu^{-1}$, 
\begin{align}    
\norm{P_{0}\omega^\nu(t)}_2 \lesssim \frac{\epsilon}{\jap{\nu t}^{1/4}}.  \label{ineq:thmSD}
\end{align} 
\end{itemize}
\end{theorem}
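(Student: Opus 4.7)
The overall strategy is a high-regularity bootstrap argument in a carefully engineered multiplier norm, combined with a splitting of the data into a Gevrey piece and an $L^2$ piece which is so small that it is irrelevant above the dissipative length scale. I would organize the argument in four stages.

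\textbf{Stage 1: good coordinates.} Mimicking \cite{BM13}, I would pass to coordinates that follow the total mean shear, namely $z = x - ty - \Phi(t,y)$, $v = y$, with $\Phi$ as in \eqref{def:phi} chosen so that the zero-in-$z$ mode of the streamwise velocity is absorbed into the change of variables. Writing $f(t,z,v) = \omega(t, z + tv + \Phi(t,v), v)$, the equation becomes
\begin{equation*}
\partial_t f + U^{\ast} \cdot \nabla_{z,v}^{L} f = \nu \Delta^{L} f,
\end{equation*}
where $\nabla^{L}$ and $\Delta^{L}$ denote the sheared gradient/Laplacian, and $U^{\ast}$ is the $z$-dependent part of the velocity expressed in the new coordinates. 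In these variables the Couette linear transport disappears, the nonlinearity no longer contains $P_0 U^x$, and the linear mechanism from Proposition~\ref{prop:LinBehave} is visible frequency-by-frequency.

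\textbf{Stage 2: the multiplier.} I would measure $f$ in a norm $\|A f\|_2$ for a Fourier multiplier
\begin{equation*}
A(t,k,\eta) = e^{\lambda(t) |k,\eta|^s} \, J(t,k,\eta) \, M(t,k,\eta),
\end{equation*}
combining (a) a shrinking Gevrey radius $\lambda(t)$ that can be spent to control the nonlinear echo cascade exactly as in \cite{BM13}, (b) a critical-times weight $J(t,k,\eta)$ that grows across each resonant window $t \sim \eta/k$ and whose commutator supplies the ``CKN'' terms needed to absorb the worst nonlinear interactions, and (c) a new enhanced-dissipation multiplier $M(t,k,\eta)$ that behaves like $\exp(c\nu \int_0^t |k|^2 + |\eta - k\tau|^2\, d\tau)$ for $k \neq 0$ and equals $1$ for $k = 0$. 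The multiplier $M$ is the nonlinear analogue of the exponential in \eqref{eq:KelvinSoln}: it encodes the full anisotropic, time-and-frequency-dependent enhancement instead of a naive $\nu^{1/3}$ replacement, in line with the remark after Proposition~\ref{prop:LinBehave}.

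\textbf{Stage 3: energy estimate and bootstrap.} I would run a bootstrap whose hypotheses encode \eqref{main-omega}--\eqref{ineq:thmSD} up to a factor of $2$. The main energy identity for $\|Af\|_2^2$ produces, on the good side, (i) a viscous term $\nu\|A\nabla^L f\|_2^2$, (ii) a Gevrey-radius dissipation term from $\dot\lambda < 0$, (iii) a $J$-commutator term which is the crucial ``CK'' term used to handle transport nonlinearities at critical times, and (iv) a new $M$-commutator that produces enhanced dissipation at a frequency-by-frequency rate matching \eqref{eq:KelvinSoln}. Against these one must bound the trilinear terms coming from the nonlinearity and from $\dot\Phi$. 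Paraproduct decomposition (reaction/transport/remainder) and careful frequency localization reduce each piece to a product of a factor that can be absorbed by one of (i)-(iv) and a factor controlled by the bootstrap; the nonlinear pressure echo is handled precisely as in \cite{BM13} by the $J$-weight. The presence of $M$ is the new element: it allows converting Gevrey-radius loss into enhanced-dissipation decay, which is what gives \eqref{ineq:thmED}, \eqref{ineq:ID2}. The inviscid-time estimates \eqref{ineq:scattering}--\eqref{ineq:P0Ux} follow from this bootstrap by reading off time derivatives from the equation and using that $\nu t^3 \lesssim 1$ in that regime, while \eqref{ineq:thmSD} for the zero mode comes from the $1$D heat equation
\begin{equation*}
\partial_t P_0 \omega - \nu \partial_{yy} P_0 \omega = -P_0(U \cdot \nabla \omega),
\end{equation*}
after checking that the nonlinear forcing is integrable in time thanks to \eqref{ineq:ID2}.

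\textbf{Stage 4: splitting off the rough $L^2$ part.} For the general data in \eqref{ineq:IDassump} I would write $\omega^\nu = \omega_S^\nu + \omega_R^\nu$ and run the above analysis on $\omega_S^\nu$. For $\omega_R^\nu$ I would use only the enhanced dissipation: solving the linearized-around-$\omega_S^\nu$ equation and invoking the $M$-multiplier estimate gives decay like $\exp(-c\nu t^3)$ on the non-zero $x$-modes, which, combined with the $1$D heat decay on the zero mode and the exponential smallness $e^{-K_0\nu^{-3(1+\delta)s/(2(1-s))}}$ of the initial $\|\omega_R^\nu\|_2$, ensures that $\omega_R^\nu$ is dominated by the Gevrey piece on every relevant time scale.

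\textbf{Main obstacle.} The delicate point is the design and tuning of the composite multiplier $A$: the Gevrey radius one spends on nonlinear echoes must be consistent with the amount of regularity the enhanced-dissipation multiplier $M$ also tries to consume near the critical times, where $M$ is essentially flat and echoes are strongest. Ensuring that the $J$-commutator and $M$-commutator together dominate the trilinear nonlinear contributions, with no circular losses, is the core technical difficulty, and it is what forces the restriction $s > 1/2$ and the exponential smallness threshold on $\|\omega_R^\nu\|_2$ in \eqref{ineq:IDassump}.
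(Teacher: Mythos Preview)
Your outline captures the broad shape of the argument, but there are two structural choices that diverge from the paper and that, as stated, would not close.

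\textbf{The coordinate system.} Taking $v=y$ and shifting only in $x$ via $\Phi$ is not enough. With that choice the dissipation becomes $\nu\partial_{zz} + \nu(\partial_y - t\,\partial_y U_0^x\,\partial_z)^2$, so the variable coefficient $\partial_y U_0^x$ (which is $O(\epsilon)$ but does not decay on inviscid time scales) contaminates the Laplacian at top order. The paper instead changes \emph{both} variables, defining $v=v(t,y)$ by an evolution equation that includes a viscous correction; this produces a crucial cancellation between the slowly decaying part of $[\partial_t v]$ and the lower-order drift coming from the curved Laplacian $\Delta_t$, leaving only the fast-decaying quantity $g$ in the transport term and $\tilde\Delta_t$ in the dissipation. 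Without this cancellation you cannot close the high-norm estimate on the coordinate coefficients, and the resonant dissipation error terms (the $E^{\neq}_{HL}$ contributions in the paper) would not be absorbable.

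\textbf{The enhanced-dissipation multiplier.} Building the exponential weight $M(t,k,\eta)\sim\exp\big(c\nu\int_0^t k^2+|\eta-k\tau|^2\,d\tau\big)$ into the same norm as the Gevrey and $J$ weights is exactly what the paper avoids, and for good reason. An exponential $M$ has catastrophic ratio behavior: $M(t,k,\eta)/M(t,k,\xi)$ is not polynomially bounded in $|\eta-\xi|$, so the product/commutator estimates needed to pass $A$ through the nonlinearity fail. In particular, the interaction of the large $k=0$ mode (where $M\equiv 1$) with $k\neq 0$ modes (where $M$ is exponentially large) cannot be controlled. The paper instead uses \emph{two separate multipliers}: the inviscid high norm $A$ with the $J$ weight (handling echoes), and a lower-regularity semi-norm $A^\nu$ carrying a \emph{polynomial} weight $\langle D(t,\eta)\rangle^\alpha$ with $D$ built from $\nu|\eta|^3$ and $\nu t^3$. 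This $D$ satisfies $\langle D(t,\eta)\rangle/\langle D(t,\xi)\rangle\lesssim\langle\eta-\xi\rangle^3$, which is exactly what makes the product lemma for $A^\nu$ go through at the cost of $3\alpha$ derivatives. The regularity gap $\sigma>\beta+3\alpha+8$ between $A$ and $A^\nu$ is then used to let the high norm control the $0\to\neq$ forcing in the $A^\nu$ estimate. This is also why the theorem only yields algebraic decay $\langle\nu t^3\rangle^{-\alpha}$, not the linear exponential rate; the paper explicitly remarks that recovering the exponential rate looks out of reach.

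\textbf{The rough part.} Your Stage~4 (linearize $\omega_R$ around $\omega_S$ for all time) is workable in principle but unnecessarily heavy. The paper instead uses an \emph{instant regularization} step: on a very short interval $[0,T_2]$ with $T_2\sim\nu^{1+\delta}$, the analytic smoothing $e^{\sqrt{\nu}\,t|\nabla|}$ brings $\omega_R$ into Gevrey-$\tfrac{1}{s}$, and the exponential smallness in \eqref{ineq:IDassump} is precisely what survives the conversion from analytic radius $\sqrt{\nu}\,T_2$ to Gevrey radius. After $T_2$ there is a single Gevrey solution, and the bootstrap runs on it alone.
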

\begin{remark} 
It is useful to compare Theorem \ref{thm:Main} with the linear behavior in Proposition \ref{prop:LinBehave}. 
The qualitative behavior predicted by the nonlinear theorem is essentially the same except for two details: the algebraic rate of the enhanced dissipation and the correction to the shear flow characteristics $\Phi(t,y)$. The correction $\Phi$ is to account for the fact that the perturbation induces a small shear, $<U^x(t)>$, which varies in time and decays at a non-integrable rate on the very long $O(\textbf{Re})$ time scale (in particular, in the $\nu = 0$ case the flow never returns to Couette flow). Therefore, gradients would grow without bound in \eqref{main-omega} if we did not account for this. 
The algebraic rate of decay arises since, due to un-mixing, we pay regularity in return for 
enhanced dissipation. The current proof pays $3\alpha$ derivatives; with some additional technical effort it seems we could trade Gevrey-2 regularity (or Gevrey-$\frac{1}{\beta}$ for any $\beta < s$) and likely get a decay like $\exp\left[-(\nu t^3)^{1/6} \right]$. However, getting the decay seen on the linear level seems very difficult with the current method (we are unsure about whether or not we can expect the same linear decay rate from the nonlinear solution). 
\end{remark}
   
\begin{remark} 
Inequality \eqref{ineq:thmSD} is also true with the $L^2$ norm replaced by $\mathcal{G}^{\lambda^\prime;s}$ with a very similar proof. 
\end{remark}
\begin{remark} 
For $\nu t^3 \ll 1$, statement (ii) essentially reduces to the main results of \cite{BM13}. 
\end{remark} 
\begin{remark}
We note that if $\omega^\nu_{R,in}$ were in $H^\sigma$ with $\sigma > 5/2$, instead of merely $L^2$, then one could take $\delta = 0$ in the hypothesis \eqref{ineq:IDassump}, which is consistent with the instant regularization available from the heat equation.   
\end{remark} 
\begin{center}
\begin{figure}[hbpt] 
\begin{picture}(150,200)(-100,0)
\scalebox{.5}{\includegraphics{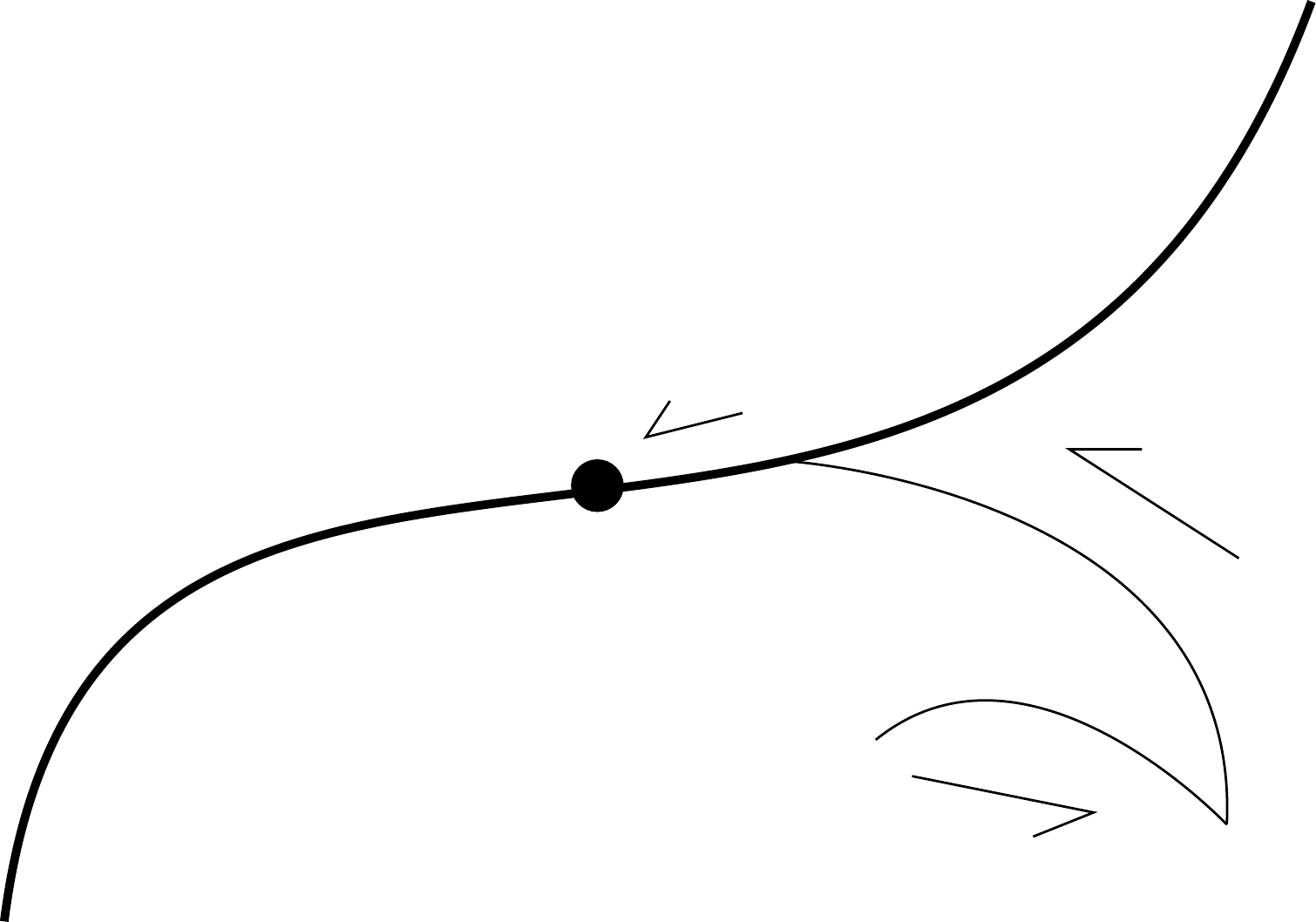}}
\put(-100,25){$t \ll 1$}
\put(-20,5){$t \approx \RE^{1/3}$}
\put(-150,80){$t \gg \RE$}
\put(-140,55){$Couette$}
\put(-100,90){$t \gg \RE^{1/3}$}
\put(0,140){$Shear flows$}
\end{picture}
\caption{This is a schematic of the qualitative behavior of $\omega^\nu(t)$ in a strong topology (e.g. $H^\sigma$ for $\sigma > 0$) as time evolves. 
The solution rapidly regularizes due to the viscosity, together with the assumption \eqref{ineq:IDassump}, and near time zero lies in a neighborhood of the Couette flow. For $t \ll \RE^{1/3}$, inviscid dynamics dominate and there is a transient growth of strong norms as the vorticity mixes (e.g. the $H^\sigma$ norm will generally grow as $\jap{t}^{\sigma}$). By times $t \gg \RE^{1/3}$, the dissipation dominates and the solution rapidly converges to a shear flow (but not necessarily the Couette flow). 
Finally, for times $t \gg \RE$, the remaining shear flow relaxes to the Couette flow. } \label{fig:Strong}
\end{figure}
\end{center}
\begin{center}
\begin{figure}[hbpt] 
\begin{picture}(150,200)(-100,0)
\scalebox{.5}{\includegraphics{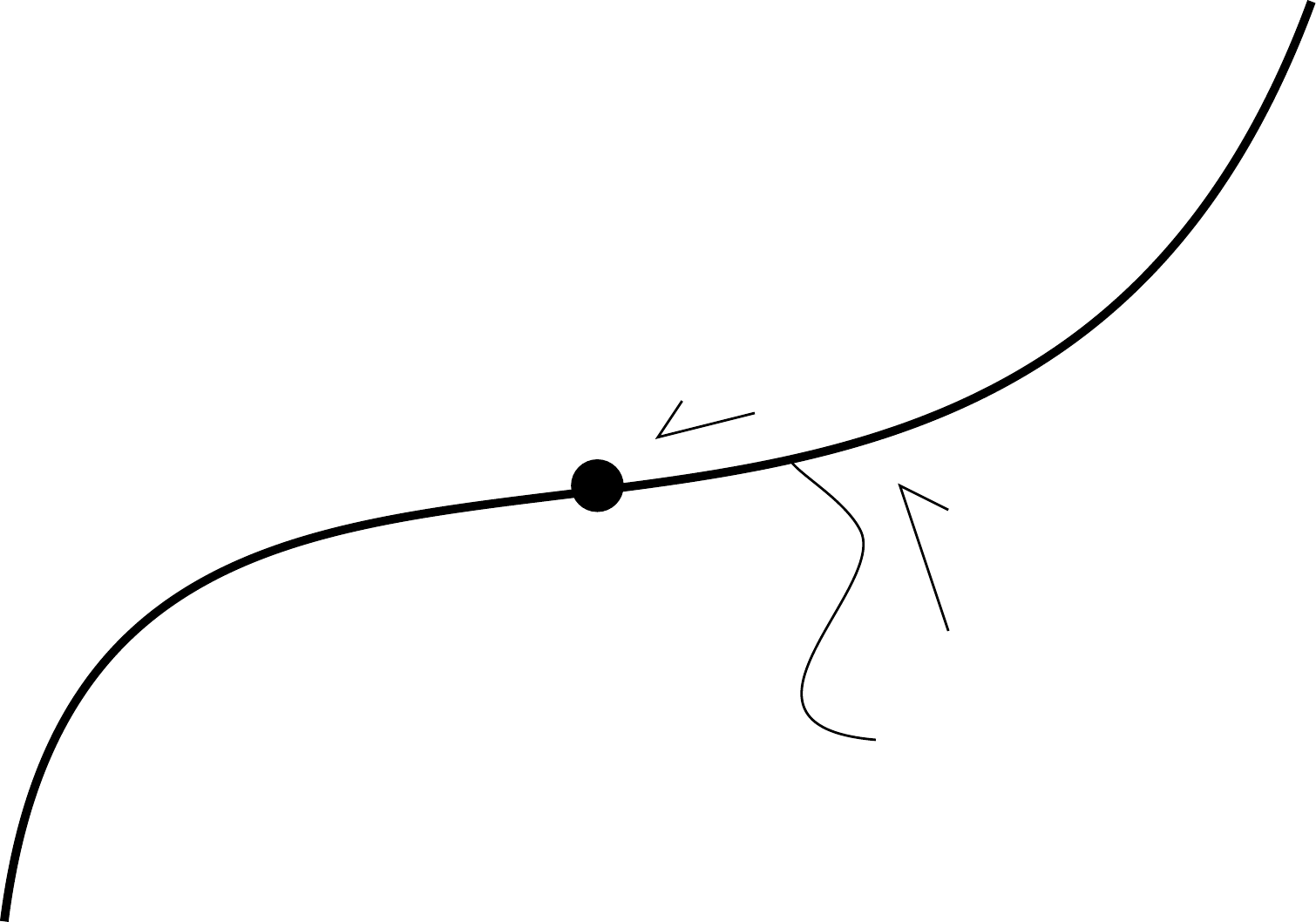}}
\put(-85,15){$t \ll 1$}
\put(-150,80){$t \gg \RE$}
\put(-140,55){$Couette$}
\put(-100,90){$t \gg 1$}
\put(0,140){$Shear flows$}
\end{picture}
\caption{This is a schematic of the qualitative behavior of $\omega^\nu(t)$ in the weak topology as time evolves. 
The solution begins in a neighborhood of the Couette flow and rapidly approaches a shear flow in the weak topology via 
inviscid damping on time scales independent of $\RE$. 
There is relatively little change until times $t \gg \RE$ when finally viscous dissipation 
relaxes the shear flow back to the Couette flow.} \label{fig:Weak} 
\end{figure}
\end{center}

Schematics of the qualitative behavior predicted by Theorem \ref{thm:Main} are given in Figures \ref{fig:Strong} and \ref{fig:Weak}.
As Theorem \ref{thm:Main} predicts essentially the same behavior as the main Theorem in \cite{BM13} before the enhanced dissipation is activated, 
it is natural that the proof of Theorem \ref{thm:Main} also contains the entire proof of the main theorem in \cite{BM13}. 
To avoid being redundant, we simply quote without proof estimates that are proved in \cite{BM13} 
 and \emph{concentrate only on the steps that are specific to Navier-Stokes.} 
The proof of Theorem \ref{thm:Main} is outlined in \S\ref{sec:Proof}, however let us quickly highlight the main ideas and the main challenges. 
\begin{itemize} 
\item The coordinate system used in \cite{BM13} needs to be adapted to account for the viscous dissipation in the background shear flow (see \eqref{def:zv} below). 
In the Lagrangian view, we are accounting for the stochastic wandering of particle trajectories in the $y$ variable. In the Eulerian view, we are accounting for the momentum transport induced by viscosity. Note that this is physically relevant even for short times, as the viscous dissipation always dominates for high enough frequencies.  
Like the choice of coordinates in \cite{BM13}, the choice used here is determined essentially uniquely by the analytic properties we require. 

\item As can be seen from \eqref{eq:KelvinSoln}, the enhanced dissipation effect is highly anisotropic in time and frequency: it is only active for $x$-dependent frequencies and slows down near the critical time $t \sim \eta/k$. To correctly capture the enhanced dissipation we use a special semi-norm that is well adapted to the anisotropy (defined in \eqref{def:Anu} and analyzed in \S\ref{sec:Anu}). The anisotropy also creates a huge imbalance between $x$-dependent modes and $x$-independent modes, and an important part of the analysis in \S\ref{sec:ED} is to prove that very little enstrophy from the $x$-independent modes can be converted to $x$-dependent enstrophy at long times. 

\item The evolving background shear flow determines the rate at which dissipation occurs locally in $y$, a fully nonlinear effect as the shear flow is determined by the solution. Controlling this effect will be the main challenge to proving Theorem \ref{thm:Main} and requires a careful analysis of the interplay between the regularity and decay of the background shear flow with the regularity and decay of the $x$-dependent modes (indeed, the structure of the proof is essentially dictated by this balance; see \S\ref{sec:Proof}). 
\end{itemize}

\subsection{Notation and conventions} \label{sec:Notation}
See \S\ref{Apx:LPProduct} for the Fourier analysis conventions we are taking.
A convention we generally use is to denote the discrete $x$ (or $z$) frequencies as subscripts.   
By convention we always use Greek letters such as $\eta$ and $\xi$ to denote frequencies in the $y$ or $v$ direction and lowercase Latin characters commonly used as indices such as $k$ and $l$ to denote frequencies in the $x$ or $z$ direction (which are discrete).
Another convention we use is to denote dyadic integers by $M,N \in \mathbb{D}$ or $2^{\Integer}$ where  
\begin{align*} 
2^\Integer & = \set{...,2^{-j},...,\frac{1}{4},\frac{1}{2},1,2,...,2^j,...}, \\ 
\mathbb{D} & = \set{\frac{1}{2},1,2,...,2^j,...} = 2^\mathbb{D}  \cup {\frac12} . 
\end{align*}
This will be useful when defining Littlewood-Paley projections and paraproduct decompositions, see \S\ref{Apx:LPProduct}. 
Given a function $m \in L^\infty$, we define the Fourier multiplier $m(\grad) f$ by 
\begin{align*} 
(\widehat{m(\grad)f})_k(\eta) =  m( (ik,i\eta) ) \hat{f}_k(\eta). 
\end{align*}   

We use the notation $f \lesssim g$ when there exists a constant $C > 0$ independent of the parameters of interest 
such that $f \leq Cg$ (we analogously define $f \gtrsim g$). 
Similarly, we use the notation $f \approx g$ when there exists $C > 0$ such that $C^{-1}g \leq f \leq Cg$. 
We sometimes use the notation $f \lesssim_{\alpha} g$ if we want to emphasize that the implicit constant depends on some parameter $\alpha$.
We will denote the $l^1$ vector norm $\abs{k,\eta} = \abs{k} + \abs{\eta}$, which by convention is the norm taken in our work. 
Similarly, given a scalar or vector in $\Real^n$ we denote
\begin{align*} 
\jap{v} = \left( 1 + \abs{v}^2 \right)^{1/2}. 
\end{align*} 
We use a similar notation to denote the $x$ (or $z$) average of a function: $<f> = \frac{1}{2\pi}\int f(x,y) dx = f_0$. 
We also frequently use the notation $P_{\neq 0}f = f - f_0$. 
We denote the standard $L^p$ norms by $\norm{f}_{p}$. 
We make common use of the Gevrey-$\frac{1}{s}$ norm with Sobolev correction defined by 
\begin{align*} 
\norm{f}_{\G^{\lambda,\sigma;s}}^2 = \sum_{k}\int \abs{\hat{f}_k(\eta)}^2 e^{2\lambda\abs{k,\eta}^s}\jap{k,\eta}^{2\sigma} d\eta. 
\end{align*} 
Since most of the paper we are taking $s$ as a fixed constant, it is normally omitted. Also, if 
$\sigma =0$, it is omitted. 
We refer to this norm as the $\mathcal{G}^{\lambda,\sigma;s}$ norm and occasionally refer to the space of functions
\begin{align*} 
\mathcal{G}^{\lambda,\sigma;s} = \set{f \in L^2 :\norm{f}_{\G^{\lambda,\sigma;s}}<\infty}. 
\end{align*}
See \S\ref{apx:Gev} for a discussion of the basic properties of this norm and some related useful inequalities.

For $\eta \geq 0$, we define $E(\eta)\in \Integer$ to be the integer part.  
We define for $\eta \in \Real$ and $1 \leq \abs{k} \leq E(\sqrt{\abs{\eta}})$ with $\eta k \geq 0$, 
$t_{k,\eta} = \abs{\frac{\eta}{k}} - \frac{\abs{\eta}}{2\abs{k}(\abs{k}+1)} =  \frac{\abs{\eta}}{\abs{k}+1} +  \frac{\abs{\eta}}{2\abs{k}(\abs{k}+1)}$ and $t_{0,\eta} = 2 \abs{\eta}$ and 
the critical intervals  
\begin{align*} 
I_{k,\eta} = \left\{
\begin{array}{lr}
[t_{\abs{k},\eta},t_{\abs{k}-1,\eta}] & \textup{ if } \eta k \geq 0 \textup{ and } 1 \leq \abs{k} \leq E(\sqrt{\abs{\eta}}), \\ 
\emptyset & otherwise.
\end{array} 
\right. 
\end{align*} 
For minor technical reasons, we define a slightly restricted subset as the \emph{resonant intervals}
\begin{align*} 
\mathbf I_{k,\eta} = \left\{
\begin{array}{lr} 
I_{k,\eta} & 2\sqrt{\abs{\eta}} \leq t_{k,\eta}, \\ 
\emptyset & otherwise.
\end{array} 
\right. 
\end{align*} 
Note this is the same as putting a slightly more stringent requirement on $k$: $k \leq \frac{1}{2}\sqrt{\abs{\eta}}$.

\section{Proof of Theorem \ref{thm:Main}} \label{sec:Proof}
\subsection{Instant regularization and \texorpdfstring{$O(1)$}{O(1)} times} 
A physical aspect of our work is to verify that the instant regularization due to the viscosity
replaces the qualitative regularity requirements in \cite{BM13} with 
the more physically natural quantitative regularity requirement \eqref{ineq:IDassump}. 
Indeed, we have the following result:
\begin{proposition}[Instant regularization] \label{prop:InstantReg}
For all $\epsilon > 0$, $\lambda > \lambda^\prime > 0$, $\delta>0$, and $s \in (1/2,1)$, there exists an $\epsilon^\prime = \epsilon'(s,\lambda,\lambda^\prime,\delta)$, a $K_0 = K_0(s,\lambda,\lambda^\prime,\delta)$, and a $\nu_0 = \nu_0(s,\lambda,\lambda^\prime,\delta)$ such that if $\omega_{in}^\nu$ is mean-zero and can be written as the sum of a smooth part and a rough part $\omega_{in}^{\nu} = \omega^\nu_{S,in} + \omega^\nu_{R,in}$ with 
\begin{align}
\norm{|\nabla|^{-1} \omega^\nu_{in}}_{2} + \norm{\omega^\nu_{S,in}}_{\G^{\lambda;s}}  + e^{K_0\nu^{-\frac{(3+\delta)s}{2(1-s)}}}   \norm{\omega^\nu_{R,in}}_{2}  \leq \epsilon^\prime,  
\label{ineq:Instant}
\end{align}
then for all $\nu \leq \nu_0$  the solution $\omega^\nu$ to 2D NSE near Couette flow \eqref{def:NSE}, with initial datum $\omega_{in}^\nu$ satisfies
\begin{align} 
\label{eq:analytic:to:do}
\norm{u^\nu(T_{\lambda,\lambda^\prime})}_2 + \norm{ \omega^\nu(T_{\lambda,\lambda^\prime}) }_{\G^{\frac{3\lambda}{4} + \frac{\lambda^\prime}{4};s}} \leq \epsilon
\end{align}
where 
\begin{align}
T_{\lambda,\lambda^\prime} = \min\left\{ \frac{\lambda - \lambda'}{C_0}, 1\right\} ,
\label{eq:T:lambda}
\end{align}
for some sufficiently large universal constant $C_0 >0 $.
\end{proposition}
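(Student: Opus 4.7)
The plan is to decompose the solution as $\omega^\nu = \omega_S + \omega_R$ mirroring the data splitting in \eqref{ineq:Instant}, and to exploit two distinct mechanisms on the short interval $[0, T_{\lambda, \lambda'}]$: for $\omega_S$, a Cauchy--Kowalevski--type Gevrey energy estimate (in the spirit of \cite{BM13}) for which the viscous term is merely a favorable sign; for $\omega_R$, the strong smoothing of the heat semigroup, which upgrades $L^2$ data to Gevrey regularity at an explicit exponential cost in $\nu$. The exponential smallness assumption on $\norm{\omega^\nu_{R,in}}_2$ is tailored precisely to absorb this cost.

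\textbf{Smooth part.} For $\omega_S$, I would work with a linearly shrinking Gevrey index $\lambda(t) = \lambda - \kappa t$ with $\kappa \sim C_0$, and perform an energy estimate for $\norm{\omega_S(t)}_{\mathcal{G}^{\lambda(t);s}}^2$ against \eqref{def:NSE}. Differentiating the weight in time produces the favorable CK-type gain
\[
-\dot\lambda(t) \sum_k \int \abs{k,\eta}^s \abs{\widehat{\omega_S}(t,k,\eta)}^2 e^{2\lambda(t)\abs{k,\eta}^s}\, d\eta,
\]
which absorbs the quadratic paraproduct contributions from $U \cdot \nabla \omega$ exactly as in \cite{BM13}, and the dissipation $-\nu\abs{k,\eta}^2$ is a bonus. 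Choosing $C_0$ sufficiently large and $\epsilon'$ small closes this estimate on $[0, T_{\lambda, \lambda'}]$, yielding $\norm{\omega_S(T_{\lambda, \lambda'})}_{\mathcal{G}^{(\lambda + \lambda')/2;s}} \lesssim \epsilon'$.

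\textbf{Rough part and heat smoothing.} The elementary optimization $\sup_{r \geq 0} (2\mu r^s - 2\nu T r^2) \leq C_s \mu^{2/(2-s)} (\nu T)^{-s/(2-s)}$ yields the heat-semigroup smoothing bound
\[
\norm{e^{\nu T \Delta} f}_{\mathcal{G}^{\mu;s}} \leq \exp\!\Bigl( C_s \mu^{2/(2-s)} (\nu T)^{-s/(2-s)} \Bigr) \norm{f}_2.
\]
Combined with a Duhamel representation of $\omega_R$ in the streaming frame \eqref{eq:KelvinSoln} and a bootstrap for the nonlinear coupling $U_R \cdot \nabla \omega_S + U_S \cdot \nabla \omega_R$, this produces an estimate of the form $\norm{\omega_R(T_{\lambda, \lambda'})}_{\mathcal{G}^{3\lambda/4 + \lambda'/4;s}} \leq \exp(C \nu^{-s/(2-s)}) \bigl( \norm{\omega^\nu_{R,in}}_2 + \text{coupling terms}\bigr)$. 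Since $\frac{s}{2-s} < \frac{(3+\delta)s}{2(1-s)}$ for every $s \in (1/2,1)$ and $\delta > 0$ (as one checks directly), taking $K_0$ sufficiently large in \eqref{ineq:Instant} makes the right-hand side arbitrarily smaller than $\epsilon$. The velocity bound $\norm{u^\nu(T_{\lambda, \lambda'})}_2 \leq \epsilon$ is immediate from a standard energy inequality applied to \eqref{def:NSEMomentum}, since the Couette transport is skew-symmetric and the lift term $(U^y,0)$ contributes only a multiplicative factor $e^{C T_{\lambda,\lambda'}}$.

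\textbf{Main obstacle.} The hardest step will be closing the nonlinear bootstrap for the coupled pair $(\omega_S, \omega_R)$. The terms $U_S \cdot \nabla \omega_R$ and $U_R \cdot \nabla \omega_S$ pair a function that is Gevrey-regular by hypothesis with one that is Gevrey-regular only through heat-equation smoothing, and the Gevrey paraproduct estimates must be arranged so that the exponential cost $\exp(C\nu^{-s/(2-s)})$ appears only once in the final bound rather than being compounded by the time integration inside Duhamel. This is precisely what dictates that the buffer $\lambda - \lambda'$ be partitioned between the CK estimate on $\omega_S$, the heat-smoothing estimate on $\omega_R$, and the reserve left in the target norm $\mathcal{G}^{3\lambda/4 + \lambda'/4;s}$; the specific choice of intermediate radii should be inessential, provided each stage has a fixed fraction of $\lambda - \lambda'$ to spend.
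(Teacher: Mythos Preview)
Your overall strategy—split into smooth and rough parts, propagate Gevrey for $\omega_S$ via a Cauchy--Kowalevski argument, and use parabolic smoothing to upgrade $\omega_R$ from $L^2$ to Gevrey—matches the paper's in outline, but the implementation you sketch for the rough part is genuinely different and, if carried out correctly, would yield a sharper exponent than the paper obtains.

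The paper does \emph{not} smooth $\omega_R$ directly into Gevrey-$\frac{1}{s}$. Instead it works in two stages. On a very short interval $[0,T_2]$ with $T_2\sim\nu^{1+\delta}$ it runs a Foias--Temam energy estimate with the \emph{analytic} multiplier $e^{\sqrt\nu\,t\,|\nabla|}$ on both $\omega_S$ and $\omega_R$ (the short time being forced by the $\nu^{-(1+\delta)}$ nonlinear growth rate). At $t=T_2$ it then converts the tiny analytic radius $\sqrt\nu\,T_2\sim\nu^{(3+2\delta)/2}$ into a Gevrey-$\frac{1}{s}$ radius of order $\lambda$ via \eqref{ineq:IncExp}, paying exactly the cost $e^{K_0\nu^{-(3+2\delta)s/(2(1-s))}}$, and finally propagates Gevrey regularity for the full $\omega$ on $[T_2,T_{\lambda,\lambda'}]$. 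The exponent in the hypothesis is thus an artifact of passing through analyticity on a $\nu^{1+\delta}$-sized window. Your direct route would replace this with the heat-smoothing exponent $\nu^{-s/(2-s)}$, which is strictly smaller, so the statement as written would follow \emph{a fortiori}. What the paper's detour buys is modularity: the analytic energy estimate is entirely standard, and all Gevrey product lemmas are only ever used with a fixed positive radius.

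There is, however, a real gap in your sketch. The Duhamel formulation does not close: the smoothing bound $\|S(t-\tau)F\|_{\G^{\mu;s}}\le e^{C(\nu(t-\tau))^{-s/(2-s)}}\|F\|_2$ has a kernel that is \emph{not integrable} as $\tau\to t$, so the time integral over the nonlinear source diverges no matter how small the source is. The correct vehicle for your strategy is an energy estimate with multiplier $e^{\mu(t)|\nabla|^s}$ whose radius \emph{grows} from $0$ to order $\lambda$; the growth term $\dot\mu\||\nabla|^{s/2}M\omega_R\|_2^2$, together with the Couette transport (which contributes a term of the same shape with coefficient $C\mu$), is then absorbed pointwise in frequency into the dissipation $\nu\||\nabla|M\omega_R\|_2^2$ via Young's inequality, producing precisely the Gr\"onwall rate $C\nu^{-s/(2-s)}$ you want. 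This is the Gevrey-$\frac{1}{s}$ analogue of what the paper does with the analytic multiplier, and it is what your heat-semigroup heuristic is really encoding. Once recast this way, the coupling terms go through as you anticipate, using the commutator structure of the transport and the strict ordering $\mu(t)<\lambda_S(t)$ between the rough and smooth radii; you should be aware that the self-interaction $U_R\cdot\nabla\omega_R$ near $t=0$ (where $\mu\approx 0$ and Gevrey is not yet an algebra) still needs the low-frequency velocity control $\|u_R\|_2$ and an interpolation device like the paper's $\delta$-trick in \eqref{eq:step:a:4}.
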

\begin{proof} 
The proof consists in fours steps:
\begin{itemize}
\item[(a)] We propagate the low-frequency control of the solution, which is encoded in the $L^2$ energy estimate on $u^\nu$. 
\item[(b)] We solve the Navier-Stokes equations perturbed about the Couette flow \eqref{def:NSE} with initial datum $\omega^{\nu}_{S,in}$ and establish that the solution $\omega^{\nu}_{S}(t)$ lies in $\G^{\frac{4\lambda+\lambda'}{5} ;s} \cap \G^{\sqrt{\nu} t; 1}$ for all $t \leq T_2 \lesssim \nu^{1+\delta} $,   with a norm less than $2 \epsilon^\prime$.
\item[(c)] We solve the perturbation of \eqref{def:NSE} around the solution $\omega^{\nu}_S$, with initial datum $\omega^{\nu}_{R,in}$ and show that the corresponding solution lies in $\G^{\sqrt{\nu} t; 1}$ for all $t \leq T_2$, with a suitable control on the norm. In view of the smallness of the initial datum $\omega^{\nu}_{R,in}$ in $L^2$ it follows that the $\G^{\frac{4\lambda+\lambda'}{5};s}$ norm of $\omega^{\nu}_{R}$ is also less than $2 \epsilon^\prime$.
\item[(d)] We propagate the Gevrey-class regularity of the solution to \eqref{def:NSE} with initial datum $\omega^{\nu}_{S}(T_2) + \omega^{\nu}_{R}(T_2) \in \G^{\frac{4\lambda+\lambda'}{5};s}$ of size $\leq 4 \epsilon^\prime$ and show that the corresponding solution $\omega^\nu$ at time $T_{\lambda,\lambda^\prime}$ obeys the bound \eqref{eq:analytic:to:do}.
\end{itemize}
For the remainder of the proof, for ease of notation we drop the $\nu$ upper index for $\omega$ and $u$.
Without loss of generality we may assume the ``rough'' part of the initial datum $\omega_{R,in}$ is supported on frequencies away from the set $\{ k=0, |\eta|\leq 1\}$. 
Further, we define $u_S(t) = \grad^{\perp}\Delta^{-1}\omega_S(t)$ and $u_R(t) = \grad^{\perp}\Delta^{-1}\omega_R(t)$, 
the velocities associated with the smooth and rough solutions (both in $L^2$ initially due to the assumptions on the initial data and the frequency support of $\omega_{R,in}$).  
\bigskip

\noindent {\it Proof of (a).}
The standard $L^2$ energy estimate for $u_S$ obtained from \eqref{def:NSEMomentum} yields
\[
\frac 12 \frac{d}{dt} \norm{u_S}_2^2 +  \nu \norm{\omega_S}_2^2 \leq  \left| \int u^{x}_S u^{y}_S \right| \leq  \norm{P_{\neq 0} u_S}_2^2
\]  
and therefore  
\begin{align}
\norm{u_S(t)}_2 \leq 3 \norm{u_{S,in}}_2 \leq 3 ( \norm{|\nabla|^{-1} \omega_{in}}_2 + \norm{\omega_{S,in}}_2) \leq 6 \epsilon^\prime 
\label{eq:u:S:control}
\end{align} 
for all $t \leq 1$. 
Note that the same bound for the full solution $u$ holds on $[0,1]$. \\

\noindent {\it Proof of (b).}
Consider the analytic Fourier multiplier
\begin{align}
M_A(t,k,\eta) &= \jap{k,\eta}^\sigma e^{\sqrt{\nu}\; t\; |k,\eta| }
\end{align}
for some $\sigma > 5/2$.
As in~\cite{FoiasTemam89}, the solution $\omega_S$ of \eqref{def:NSE} with initial datum $\omega_{S,in}$ obeys the a priori estimate
\begin{align}
 &\frac{1}{2} \frac{d}{dt} \norm{M_A \omega_S}_2^2 + \nu \norm{ |\nabla| M_A \omega_S}_2^2 \notag\\
 &\qquad = \sqrt{\nu} \norm{ |\nabla|^{1/2} M_A \omega_S}_2^2 + \sum_k \int_\eta  M_A(k,\eta)^2 \partial_\eta \hat{\omega}_S(k,\eta) \, k \, \overline{\hat{\omega}_S(k,\eta)}  d\eta \notag\\
 &\qquad \ \ + \sum_{k,l} \int_\eta \! \int_\xi M_A(k,\eta) \overline{\hat{\omega}_S(k,\eta)} M_A(l,\xi)  \hat{\omega}_S(l,\xi) M_A(k-l,\eta-\xi) \hat{\omega}_S(k-l,\eta-\xi)   \notag\\
 &\qquad \qquad \qquad \qquad \times  \frac{(l,\xi)^\perp \cdot (k,\eta)}{ \abs{l,\xi}^2} \frac{M_A(k,\eta)}{M_A(l,\xi) M_A(k-l,\eta-\xi)} d\xi d\eta\notag\\
 &\qquad = \sqrt{\nu} \norm{ |\nabla|^{1/2} M_A \omega_S}_2^2 + T_A + N_A
 \label{eq:step:a:1}
\end{align}
where $T_A$ is the shear transport term and $N_A$ is the nonlinear term. Upon integrating by parts in $\eta$ and bounding $|\partial_\eta M_A(k,\eta)|$ we immediately obtain
\begin{align}
|T_A| &\lesssim \sqrt{\nu} \; t \norm{ |\nabla|^{1/2} M_A \omega^\nu_S}_2^2.
\label{eq:step:a:3}
\end{align}
In order to bound the nonlinear term $N_A$ we use the triangle inequality 
\[ 
|M_A(k,\eta)| \leq |M_A(l,\xi)| |M_A(k-l,\eta-\xi)|,
\] 
followed by \eqref{ineq:L2L2L1}, which implies
\begin{align}
|N_A| 
& \leq \sum_{k,l} \int_\eta \! \int_\xi \abs{k,\eta} M_A(k,\eta) |\hat{\omega}_S(k,\eta)| M_A(k-l,\eta-\xi) |\hat{\omega}_S(k-l,\eta-\xi)| \notag\\
&\qquad \qquad \times \abs{l,\xi}^{\frac{\delta}{(2+\delta)}} M_A(l,\xi)  |\hat{\omega}_S(l,\xi)|   \frac{{\bf 1}_{|(l,\xi)|\geq 1}}{\abs{l,\xi}^{1+\frac{\delta}{(2+\delta)}}}   d\xi d\eta \notag\\
&\quad + \left|  \sum_k \int_{\eta} \int_\xi  k\,  M_A(k,\eta)  \overline{\hat{\omega}_S(k,\eta)} M_A(k,\eta-\xi) \hat{\omega}_S(k,\eta-\xi)  M_A(0,\xi)  \hat{u}_S(0,\xi)  {\bf 1}_{|\xi|\leq 1}  d\xi d\eta \right| \notag\\
&\lesssim_\delta \norm{|\nabla| M_A \omega_S}_2  \norm{M_A \omega_S}_2 \norm{|\nabla|^{\frac{\delta}{2+\delta}} M_A \omega_S}_{2} 
+ \norm{|\nabla| M_A \omega^\nu_S}_2 \norm{M_A \omega^\nu_S}_2  \norm{\hat{u}_S(0,\xi) {\bf 1}_{l=0,|\xi|\leq 1}}_{2} \notag\\
&\lesssim \norm{|\nabla| M_A \omega_S}_2^{\frac{2+2\delta}{2+\delta}}   \norm{M_A \omega_S}_2^{\frac{4+\delta}{2+\delta}}  + \norm{|\nabla| M_A \omega_S}_2 \norm{M_A \omega_S}_2  \norm{ {u}_S}_{2},
\label{eq:step:a:4}
\end{align}
for any $\delta >0$. 
In the last inequality we have used the Gagliardo-Nirenberg interpolation for the first term. In view of the bound \eqref{eq:u:S:control}, estimate \eqref{eq:step:a:4} yields
\begin{align}
|N_A|
&\leq \frac{C}{\nu} \norm{M_A \omega_S}_2^2 \left(  \frac{1}{\nu^{\delta}} \norm{M_A \omega_S}_2^{2+\delta} +\norm{ {u}_S}_{2}^2 \right) + \frac{\nu}{2}  \norm{|\nabla| M_A \omega_S}_2^2\notag\\
&\leq \frac{C}{\nu^{1+\delta}} \norm{M_A \omega_S}_2^2 (1 +  \norm{M_A \omega_S}_2)^{2+\delta} + \frac{\nu}{2}  \norm{|\nabla| M_A \omega_S}_2^2 
\label{eq:step:a:5}
\end{align}
for some positive constant $C \geq 1$, that is independent of $\nu, \epsilon^\prime \leq 1$, but depends on the choice of $\delta>0$. Combining \eqref{eq:step:a:1}, \eqref{eq:step:a:3}, and \eqref{eq:step:a:5} we arrive at 
\begin{align*} 
 \frac{d}{dt} \norm{M_A \omega_S}_2^2 + \nu \norm{ |\nabla| M_A \omega_S}_2^2 & \leq 2\sqrt{\nu} (1+ t) \norm{ |\nabla|^{1/2} M_A\omega_S}_2^2 + \frac{C}{\nu^{1+\delta}}  \norm{M_A \omega_S}_2^2 (1+ \norm{M_A \omega_S}_2)^{2+\delta} \\ 
& \leq 2\sqrt{\nu} (1+ t)\norm{ |\nabla| M_A\omega_S}_2\norm{ M_A\omega_S}_2  + \frac{C}{\nu^{1+\delta}}  \norm{M_A \omega_S}_2^2 (1+ \norm{M_A \omega_S}_2)^{2+\delta} \\ 
& \leq \frac{\nu}{2}\norm{ |\nabla| M_A\omega_S}_2^2 + 4(1+ t)^2\norm{ M_A\omega_S}_2^2 + \frac{C}{\nu^{1+\delta}}  \norm{M_A \omega_S}_2^2 (1+ \norm{M_A \omega_S}_2)^{2+\delta}, 
\end{align*} 
for any $\delta>0$ and some constant $C\geq 1$.
Therefore, for $t \leq 1$ 
\begin{align}
 \frac{d}{dt} \norm{M_A \omega_S}_2^2  \leq\frac{C_1}{\nu^{1+\delta}} \norm{M_A \omega_S}_2^2 (1 + \norm{M_A \omega_S}_2^2)^{1+\delta/2}
 \label{eq:step:a:6}
\end{align}
for some constant $C_1 \geq 1$. 
Letting 
\begin{align}
T_1 = \min \left\{ \frac{\nu^{1+\delta}}{2 C_1}, T_{\lambda,\lambda^\prime}  \right\}   = \frac{\nu^{1+\delta}}{2 C_1} \qquad \mbox{(without loss of generality, as }\nu \leq \nu_0\mbox{)}, \label{eq:T1}
\end{align}
we obtain from \eqref{eq:step:a:6} and the assumption \eqref{ineq:Instant} on the $H^\sigma$ norm of $\omega_{S,in}$ that for $\epsilon' \leq 1$ 
\begin{align}
\norm{ M_A(t) \omega_S(t)}_2\leq 2 \epsilon^\prime \qquad \mbox{for all} \qquad t \leq T_1.
 \label{eq:step:a:7}
\end{align}

The Gevrey class estimates differ slightly from the real analytic bounds as we need to employ an additional commutator estimate.
Consider the Gevrey-$\frac{1}{s}$ Fourier-multiplier
\begin{align}
M_G(t,k,\eta) &= \jap{k,\eta}^\sigma e^{\lambda(t) |k,\eta|^{s}}
\end{align}
where the decreasing function $\lambda(t)$ shall be chosen precisely later, and $\sigma>5/2$ is a fixed Sobolev correction. Similarly to \eqref{eq:step:a:1} we have
\begin{align}
 &\frac{1}{2} \frac{d}{dt} \norm{M_G \omega_S}_2^2 + \nu \norm{ |\nabla| M_G \omega_S}_2^2 = \dot{\lambda}(t) \norm{ |\nabla|^{s/2} M_G \omega_S}_2^2 + T_G + N_G
 \label{eq:step:b:1}
\end{align}
where $T_G$ and $N_G$ are defined precisely as $T_A$ and $N_A$ upon replacing all $M_A$'s by $M_G$'s. It is not hard to see that 
\begin{align}
|T_G| &\lesssim \norm{M_G \omega_S}_2^2 + \lambda(t) \norm{ |\nabla|^{s/2} M_G \omega_S}_2^2
\label{eq:step:b:2}.
\end{align}
Bounding the nonlinear term $N_G$ requires a commutator estimate which arises since $u_S$ is divergence free, in the spirit of \cite{LevermoreOliver97,KukavicaVicol09}. More precisely, we have, 
\begin{align*}
N_G&=  \sum_{k,l} \int_\eta \! \int_\xi  M_G(k,\eta)  \overline{\hat{\omega}_S(k,\eta)}  \hat{\omega}_S(l,\xi)  \hat{\omega}_S(k-l,\eta-\xi)
\notag\\
 &\qquad \qquad \quad  \times \frac{(l,\xi)^\perp \cdot (k,\eta)}{ \abs{l,\xi}^2} \left( M_G(k,\eta)- M_G(k-l,\eta-\xi)\right)  d\xi d\eta.
\end{align*}
By the mean-value theorem applied to $M_G$, we obtain that  
\begin{align}
|N_G|
&\leq \lambda(t) \sum_{k,l} \int_\eta \! \int_\xi M_G(k,\eta) \,|\hat{\omega}_S(k,\eta)| \,M_G(k-l,\eta-\xi)  \, |\hat{\omega}_S(k-l,\eta-\xi)| \, |\hat{\omega}_S(l,\xi)|   \notag\\
 &\qquad \qquad \quad  \times  \left| \frac{(l,\xi)^\perp \cdot (k,\eta)}{\abs{l,\xi}^2}\right|  \frac{ \abs{l,\xi} e^{\lambda(t) |l,\xi|^s}}{|k,\eta|^{1-s} + |k-l,\eta-\xi|^{1-s}}  d\xi d\eta \notag \\
 &\quad +  \sum_{k,l} \int_\eta \! \int_\xi M_G(k,\eta) \,|\hat{\omega}_S(k,\eta)| M_G(k-l,\eta-\xi)   \, |\hat{\omega}_S(k-l,\eta-\xi)| M_G(l,\xi) \, |\hat{\omega}_S(l,\xi)|   \notag\\
 &\qquad \qquad \quad  \times  \left| \frac{(l,\xi)^\perp \cdot (k-l,\eta-\xi)}{ \abs{l,\xi}^2}\right|  
\left(\jap{k-l,\eta-\xi}^{\sigma-1} + \jap{k,\eta}^{\sigma-1}  \right) \frac{C \sigma |l,\xi|}{\jap{k-l,\eta-\xi}^\sigma \jap{l,\xi}^\sigma}  d\xi d\eta\notag\\
 &\leq \lambda(t) \sum_{k,l} \int_\eta \! \int_\xi M_G(k,\eta) \,|\hat{\omega}_S(k,\eta)| M_G(k-l,\eta-\xi) \, |\hat{\omega}_S(k-l,\eta-\xi)|   M_G(l,\xi) \, |\hat{\omega}_S(l,\xi)|  \notag\\
 &\qquad \qquad \quad  \times \frac{|k,\eta|^{s/2} |k-l,\eta-\xi|^{s/2}}{ \jap{l,\xi}^\sigma}  d\xi d\eta \notag \\
 &\quad + C \sum_{k,l} \int_\eta \! \int_\xi M_G(k,\eta)\,|\hat{\omega}_S(k,\eta)| M_G(k-l,\eta-\xi) \, |\hat{\omega}_S(k-l,\eta-\xi)|
M_G(l,\xi) \, |\hat{\omega}_S(l,\xi)|    \notag\\
 &\qquad \qquad \quad  \times  \left(\frac{1}{\jap{l,\xi}^{\sigma}} +  \frac{1}{\jap{l,\xi} \jap{k-l,\eta-\xi}^{\sigma-1}} \right)  d\xi d\eta\notag\\
 &\leq C \lambda(t) \norm{M_G \omega_S}_2 \norm{|\nabla|^{s/2} M_G \omega_S }_2^2 + C  \norm{M_G \omega_S}_2^3
 \label{eq:step:b:3}
\end{align}
for some constant $C\geq 1$ that may depend only on $\sigma$.
Combining \eqref{eq:step:b:1}--\eqref{eq:step:b:3} we arrive at
\begin{align}
 \frac 12 \frac{d}{dt} \norm{M_G \omega_S}_2^2 \leq \left(\dot{ \lambda}(t) + C_2 \lambda(t)  (1 + \norm{M_G \omega_S}_2) \right) \norm{|\nabla|^{s/2} M_G \omega_S}_2^2 + C_2 \norm{M_G \omega_S}_2^2 (1+ \norm{M_G \omega_S}_2)
 \label{eq:step:b:4}
\end{align}
for some constant $C_2 \geq 1$.
The initial datum by the assumption \eqref{ineq:Instant} obeys 
\[
\norm{M_G(0) \omega_{S,in}}_2 \leq \epsilon^\prime
\]
whenever $\lambda(0) \leq \lambda$. Letting 
\[
\lambda(t) = \frac{9 \lambda + \lambda'}{10} e^{-3 C_2 t}
\]
we obtain that (recall $T_1$ is defined in \eqref{eq:T1}), 
for $\epsilon'$ sufficiently small, depending on $\sigma,\lambda,\lambda^\prime$, 
for all 
\begin{align}
t \leq  T_2 = \min\left\{ T_1, \frac{1}{3C_2} \left( 1 + \log \frac{9 \lambda + \lambda'}{8 \lambda + 2 \lambda'} \right) \right\} = \frac{\nu^{1+\delta}}{2C_1} \qquad \mbox{(without loss of generality, as }\nu \leq \nu_0\mbox{)}, 
\label{eq:T2}
\end{align}
the following holds: 
\begin{align}
\norm{M_G(t) \omega_{S}(t)}_2 \leq 2 \epsilon^\prime \qquad \mbox{and} \qquad \lambda(t) \geq \frac{4 \lambda + \lambda'}{5}.
\label{eq:step:b:5}
\end{align}
We have now completed the proof of step (b), and shown that $\omega_S(t) \in \G^{\sqrt{\nu} t;1} \cap G^{\frac{4 \lambda + \lambda'}{5};s}$, with norm less than $2 \epsilon^\prime$, but only for times $t \leq T_2 \lesssim \nu^{1+\delta}$. \\

\noindent {\it Proof of (c).}
We now wish to show that the ``rough'' perturbation $\omega_R = \omega - \omega_S$, which in view of \eqref{def:NSE}, obeys
\begin{equation} \label{def:NSE:rough}
\left\{
\begin{array}{l}
  \partial_t \omega_R+ y \partial_x \omega_R + u_R \cdot \nabla \omega_R + u_S \cdot \nabla \omega_R + u_R \cdot \nabla \omega_S= \nu \Delta \omega_R, \\ 
  u_S  = \grad^{\perp}(\Delta)^{-1} \omega_S, u_R  = \grad^{\perp}(\Delta)^{-1} \omega_R \\\omega_R(0) = \omega_{R,in},
\end{array}
\right.
\end{equation}
is sufficiently small in $\G^{\sqrt{\nu} t;1}$ at time $t =T_2$.
The proof is very much similar to the first part of step (b), and we thus omit the redundant details. 
Before estimating the analytic norm of $\omega_R$ we estimate the $L^2$ norm of $u_R$, which solves \eqref{def:NSEMomentum} perturbed about $u_S$. From the energy estimate we have
\begin{align}
\frac 12 \frac{d}{dt} \norm{u_R}_2^2 + \nu \norm{\omega_R}_2^2 
&= - \int u^{y}_R u^{x}_R - \int u_R \cdot \nabla u_S u_R \notag\\
&\leq \norm{u_R}_2^2 (1 + \|\nabla u_S\|_{L^\infty}) \leq 3 \norm{u_R}_2^2
\end{align}
for all $t \leq T_2$, by the Sobolev embedding, which holds because $\sigma>5/2$, and we have used the bound \eqref{eq:step:b:5}. In particular, in view of the assumption \eqref{ineq:Instant} on the initial datum $u_{R,in}$ which is supported away from frequencies less than $1$,  we obtain that
\begin{align}
\norm{u_R(t)}_2 \leq 21 \norm{u_{R,in}}_2 \leq 21 \norm{\omega_{R,in}}_2 \leq 21 e^{-K_0 \nu^{-\frac{(3+2\delta)s}{2(1-s)}}} \epsilon^\prime, 
\label{eq:U:rough:low:freq}
\end{align}
for all $t \leq T_2$.

Since $\omega_{R,in} \in L^2$, and not necessarily in $H^\sigma$, we need to consider a new analytic Fourier-multiplier
\begin{align}
M_R(t,k,\eta) &=  e^{ \sqrt{\nu}\; t\; |k,\eta| }.
\end{align}
Then the solution $\omega_R$ of \eqref{def:NSE:rough}  obeys the a priori estimate
\begin{align}
 &\frac{1}{2} \frac{d}{dt} \norm{M_R \omega_R}_2^2 + \nu \norm{ |\nabla| M_R \omega_R}_2^2 \notag\\
 &\qquad = \sqrt{\nu}\norm{ |\nabla|^{1/2} M_R \omega_R}_2^2 
 + \sum_k \int_\eta M_R(k,\eta)^2 \, k \, \overline{\hat{\omega}_R(k,\eta)} \partial_\eta \hat{\omega}_R(k,\eta)  d\eta \notag\\
 &\qquad \ \ + \sum_{k,l} \int_\eta \! \int_\xi  M_R(k,\eta)  \overline{\hat{\omega}_R(k,\eta)} M_R(l,\xi) \hat{\omega}_R(l,\xi) M_R(k-l,\eta-\xi) \hat{\omega}_R(k-l,\eta-\xi)  \notag\\
 &\qquad \qquad \qquad \qquad \times \frac{(l,\xi)^\perp \cdot (k,\eta)}{ \abs{l,\xi}^2} \frac{M_R(k,\eta)}{M_R(l,\xi) M_R(k-l,\eta-\xi)}   d\eta d\xi\notag\\
 &\qquad \ \ + \sum_{k,l} \int_\eta \! \int_\xi M_R(k,\eta) \overline{\hat{\omega}_R(k,\eta)}  M_R(l,\xi)  \hat{\omega}_S(l,\xi) M_R(k-l,\eta-\xi)  \hat{\omega}_R(k-l,\eta-\xi)  \notag\\
 &\qquad \qquad \qquad \qquad \times \frac{(l,\xi)^\perp \cdot (k,\eta)}{ \abs{l,\xi}^2} \frac{M_R(k,\eta)}{M_R(l,\xi) M_R(k-l,\eta-\xi)}   d\xi d\eta\notag\\
&\qquad \ \ + \sum_{k,l} \int_\eta \! \int_\xi M_R(k,\eta) \overline{\hat{\omega}_R(k,\eta)}  M_R(l,\xi)   \hat{\omega}_R(l,\xi) M_R(k-l,\eta-\xi)  \hat{\omega}_S(k-l,\eta-\xi)  \notag\\
 &\qquad \qquad \qquad \qquad \times \frac{(l,\xi)^\perp \cdot (k,\eta)}{ \abs{l,\xi}^2} \frac{M_R(k,\eta)}{M_R(l,\xi) M_R(k-l,\eta-\xi)}  d\xi d\eta\notag\\
 &\qquad = \sqrt{\nu} \norm{ |\nabla|^{1/2} M_R \omega_S}_2^2 + T_R + N_R + L_{S,R} + L_{R,S}
 \label{eq:step:c:1}.
\end{align}
As before, similarly to \eqref{eq:step:a:3} we have
\begin{align}
|T_R| &\lesssim \sqrt{\nu} \; t \norm{ |\nabla|^{1/2} M_R \omega_S}_2^2 
\label{eq:step:c:2}
\end{align}
and similarly to \eqref{eq:step:a:5} we have
\begin{align}
|N_R| &\leq \frac{C}{\nu^{1+\delta}} \norm{M_R \omega_R}_2^2 (1 + \norm{M_R \omega_R}_2)^{2+\delta} + \frac{\nu}{2}  \norm{|\nabla| M_R \omega_R}_2^2
\label{eq:step:c:3}
\end{align}
for some constant $C\geq 1$ which depends on $\delta>0$.
 Bounding the first linear term $L_{S,R}$  is done exactly as in \eqref{eq:step:a:5} by splitting according to the relative size of $|(l,\xi)|$ and $1$, and using the already established bound  \eqref{eq:step:a:7}. We obtain
\begin{align}
|L_{S,R}| 
&\lesssim   \norm{|\nabla|  M_R \omega_S}_{2} \norm{M_R \omega_R}_2 \norm{|\nabla| M_R \omega_R}_2 +  \norm{u_S}_{2} \norm{M_R \omega_R}_2 \norm{|\nabla| M_R \omega_R}_2 \notag\\
&\leq \frac{C( \epsilon^\prime)^2}{\nu} \norm{M_R \omega_R}_2^2 + \frac{\nu}{4} \norm{|\nabla| M_R \omega_R}_2^2
\label{eq:step:c:4}
\end{align}
for all $t\leq T_2$. 
For the second linear term $L_{R,S}$  we recall that for the analytic norm of $\omega_S$ we have a Sobolev $H^\sigma$ correction which allows us to deal with the derivative loss in $u_R \cdot \nabla \omega_S$, and in addition we have a good control on the low frequencies in view of \eqref{eq:U:rough:low:freq}. We then obtain
\begin{align}
|L_{R,S}| 
&\lesssim  \norm{|\nabla| M_R \omega_R}_2 \norm{M_R \omega_R}_2 \norm{|\nabla| M_R \omega_S}_2  +  \norm{u_R}_2 \norm{M_R \omega_R}_2 \norm{|\nabla| M_R \omega_S}_2 \notag\\
&\leq \frac{C (\epsilon^\prime)^2}{\nu} \norm{M_R \omega_R}_2^2 + \frac{\nu}{4} \norm{|\nabla| M_R \omega_R}_2^2, 
\label{eq:step:c:5}
\end{align}
for some constant $C\geq1$. 
Combining \eqref{eq:U:rough:low:freq}--\eqref{eq:step:c:5}, and assuming without loss of generality that $\epsilon^\prime$ is sufficiently small (in terms of $C_1,C_2$), we arrive at
\begin{align}
\frac{d}{dt} \norm{M_R \omega_R}_2^2 \leq \frac{3C_1}{2\nu^{1+\delta}} \norm{M_R \omega_R}_2^2 (1 + \norm{M_R \omega_R}_2^2)^{1+\delta/2} + C_1  \epsilon'  \norm{\omega_{R,in}}_2^2
\label{eq:step:c:6}
\end{align}
for all $t\leq T_2$, and $\nu \leq 1$, where $C_1$ is the same constant as in \eqref{eq:step:a:6}. Since by definition (cf.~\eqref{eq:T1}) we have $T_2 \leq T_1 \leq \nu^{1+\delta}/ (2C_1)$, and $M_R(0) =1$, 
we obtain from \eqref{eq:step:c:6} that
\[
\norm{M_R(t) \omega_R(t)}_2^2 \leq 9 \norm{\omega_{R,in}}_2^2.
\]
for all $t \in [0,T_2]$. It thus follows from the assumption \eqref{ineq:Instant} that 
\begin{align}
\label{eq:step:c:7}
\norm{M_R(T_2) \omega_R(T_2)}_2 \leq 3 \epsilon^\prime e^{-K_0 \nu^{-\frac{(3+2\delta) s}{2(1-s)}}}.
\end{align}
We need the above estimate as it yields a Gevrey-$\frac 1s$ bound on $\omega^\nu_R$ at time $T_2$.
Indeed, by \eqref{ineq:SobExp} and \eqref{ineq:IncExp}, 
\begin{align}
\jap{k,\xi}^\sigma e^{\frac{4\lambda + \lambda'}{5} |k,\xi|^s}  
&\leq \jap{k,\xi}^\sigma e^{(\frac{4\lambda + \lambda'}{5})^{\frac{1}{1-s}} (2\sqrt{\nu} T_2)^{-\frac{s}{1-s}}} e^{\frac{\sqrt{\nu}}{2} T_2 |k,\xi|}\notag\\
&= \jap{k,\xi}^\sigma e^{- \frac{\nu^{\frac{3+2\delta}{2}}}{4C_1}  |k,\xi|}\;
 e^{(\frac{4\lambda + \lambda'}{5})^{\frac{1}{1-s}} (4C_1)^{\frac{s}{1-s}} \nu^{-\frac{(3+2\delta)s}{2(1-s)}}} e^{\sqrt{\nu}T_2 |k,\xi|} \notag\\
&\leq  e^{K_0 \nu^{-\frac{(3+2\delta)s}{2(1-s)}}} M_R(T_2)
\label{eq:step:c:8}
\end{align}
by letting $K_0$ be sufficiently large, {\em independently} of $\nu \leq 1$. Here we have recalled the definition of $T_2$ in \eqref{eq:T2}.
Combining \eqref{eq:step:c:7}--\eqref{eq:step:c:8}, we thus have shown that for $\epsilon^\prime$ sufficiently small, depending only on $s,\sigma,\lambda, \lambda^\prime$, and a sufficiently large $K_0$, depending on the same parameters, that 
\begin{align}
\norm{M_G(T_2) \omega_R(T_2)}_2 \leq 3 \epsilon^\prime, \qquad \mbox{and} \qquad \lambda(T_2) = \frac{4 \lambda+\lambda^\prime}{5}
\label{eq:step:c:9}
\end{align}
which concludes the proof of step (c).\\ 

\noindent {\it Proof of (d).} To conclude the proof of the proposition, we now mutatis-mutandi run the Gevrey-class regularity propagation on the time interval $[T_2, T_{\lambda,\lambda^\prime}]$, on the equation \eqref{def:NSE}, with initial datum $\omega_S(T_2) + \omega_R(T_2)$. The estimate \eqref{eq:step:b:4} holds on this interval, and thus, if we let
\begin{align}
\lambda(t) = \frac{4 \lambda + \lambda^\prime}{5} e^{-3C_2 (t-T_2)}
\end{align}
we see that 
\begin{align}
\norm{M_G(t)\omega(t)}_2 \leq \epsilon
\end{align}
for all $t \leq T_{\lambda,\lambda^\prime}$. Moreover, by letting the constant $C_0$ in the definition of $T_{\lambda,\lambda^\prime}$ be sufficiently large, we can ensure that 
\begin{align}
\lambda(t) \geq \lambda(T_{\lambda,\lambda^\prime}) \geq \frac{3 \lambda + \lambda^\prime}{4}
\end{align}
as claimed in \eqref{eq:analytic:to:do}.  To conclude the proof of Proposition  \ref{prop:InstantReg}, we note that the bound $\norm{u(T_{\lambda,\lambda^\prime})}_2 \leq \epsilon$ follows from a bound similar to \eqref{eq:u:S:control}, which holds for an $O(1)$ time interval. 
\end{proof}

\subsection{Coordinate transform} \label{sec:CoordinateTrans}
One of the main insights in \cite{BM13} was the use of a very specific change of coordinates
that allows to simultaneously `mod out' by the evolution of the time-dependent background shear flow  
and treat the mixing of this background shear as a perturbation of the Couette flow (in particular, to understand the nonlinear effect of the Orr mechanism). 
In this work we will adjust the change of coordinates used in \cite{BM13} to account for the viscous dissipation in the following manner: 
\begin{subequations}  \label{def:zv}
\begin{align} 
z(t,x,y) &= x - tv(t,y), \\ 
\frac{d}{dt}\left(t (v(t,y)-y) \right) & = U_0^x(t,y) + \nu t\partial_{yy}v(t,y), \label{def:v}
\end{align}
\end{subequations}
supplemented with the initial condition
\begin{align*} 
\lim_{t \rightarrow 0} t \left(v(t,y) - y\right) = 0. 
\end{align*}
Note that when $\nu = 0$, \eqref{def:zv} reduces to the change of coordinates used in \cite{BM13} for 2D Euler. 
As briefly discussed in \S\ref{sec:Statement}, the coordinate transform is adapting to the momentum transport due to the viscosity in the background shear flow. 
It is important to note that the shear flow driving the mixing, and hence the enhanced dissipation, is the unknown, evolving flow $(y + U_0^x(t,y),0)$. 

Define the following quantities
\begin{subequations} \label{def:coordchg}
\begin{align} 
C(t,v(t,y)) & = v(t,y) - y, \\
v^\prime(t,v(t,y)) & = (\partial_y v)(t,y), \\
v^{\prime\prime}(t,v(t,y)) & = (\partial_{yy}v)(t,y), \\
[\partial_t v](t,v(t,y)) &= (\partial_t v)(t,y), \\
f(t,z(t,x,y),v(t,y))  & = \omega(t,x,y), \\  
\phi(t,z(t,x,y),v(t,y)) & = \psi(t,x,y),  \\
\tilde u(t,z(t,x,y),v(t,y)) & = U^x(t,x,y). 
\end{align}
\end{subequations}
Denoting $\Delta_t$ as the Laplacian in the coordinates \eqref{def:zv}, we derive the new Biot-Savart law: 
\begin{align} 
\Delta_t \phi = \partial_{zz}\phi  + (v^\prime)^2\left( \partial_v - t \partial_z \right)^2\phi + v^{\prime\prime}\left(\partial_{v} - t \partial_z\right) \phi = f.  \label{def:Deltat}
\end{align}
For future notational convenience denote the `linear' Laplacian $\Delta_L$ (the Laplacian associated with the pure Couette flow that arises in the coordinates $z = x-tv, v=y$):
\begin{align} 
\Delta_L \phi = \partial_{zz}\phi  + \left( \partial_v - t \partial_z \right)^2\phi.  \label{def:DeltaL}
\end{align}
The vorticity equation \eqref{def:NSE} in the new variables becomes
\begin{align} 
\partial_t f + [\partial_t v]\partial_v f - \nu v^{\prime\prime} t\partial_z f + v^\prime \grad^{\perp}_{z,v} P_{\neq 0} \phi \cdot \grad_{z,v} f = \nu \Delta_t f. \label{def:vortPDE}
\end{align} 
Note the derivatives are with respect to $z$ and $v$; we will henceforth drop the subscripts and always assume this is the case unless otherwise indicated. Taking the $x$ average of the first equation in \eqref{def:NSEMomentum} and transforming to the coordinates defined by \eqref{def:zv} (using the notations \eqref{def:coordchg}) gives
\begin{align} 
\partial_t \tilde u_0 +  [\partial_t v]\partial_v \tilde u_0 + <v^\prime \grad^\perp P_{\neq 0} \phi \cdot \grad \tilde u > = \nu \Delta_t \tilde u_0. \label{def:moment1}   
\end{align}

Next, we derive expressions for the coordinate system that are amenable to energy estimates and derive a 
useful cancellation in \eqref{def:vortPDE} and \eqref{def:moment1}.
The parabolic aspect to the coordinate system in \eqref{def:v} is crucial for both of these. 
Define the auxiliary function  
\begin{align} 
g(t,v) = \frac{1}{t}\left(\tilde u_0(t,v) - C(t,v)\right),  \label{def:g}
\end{align} 
which roughly measures how the background shear flow is converging in time. 
In 2D Euler \cite{BM13}, this quantity is exactly $[\partial_t v]$, however due to the presence of the dissipation in \eqref{def:v} here we instead have the relation 
\begin{align}
[\partial_t v] = g + \nu v^{\prime\prime}. \label{eq:awesome}
\end{align} 
We also derive from the chain rule: 
\begin{subequations}
\begin{align} 
v^\prime \partial_v C(t,v)  & = v^\prime(t,v) - 1, \\ 
\partial_t C + [\partial_t v]\partial_v C & = [\partial_t v], \label{ineq:dtc} \\ 
v^\prime \partial_v v^\prime & = v^{\prime\prime}, \label{def:vpp} \\
v^{\prime\prime} & = \Delta_t C. \label{eq:vppDc}
\end{align}
\end{subequations}
Computing the time derivative of $g$ from \eqref{def:g} using \eqref{ineq:dtc}, \eqref{eq:awesome}, \eqref{def:moment1} and \eqref{eq:vppDc}, yields
\begin{align} 
\partial_t g &  = -\frac{g}{t} + \frac{1}{t}\left( -[\partial_t v]\partial_v \tilde u_0  - v^\prime <\grad^\perp P_{\neq 0} \phi\cdot \grad \tilde u> + \nu \Delta_t \tilde u_0 + [\partial_t v]\partial_v C - g - \nu v^{\prime\prime}\right) \nonumber \\ 
& = -\frac{2g}{t} - [\partial_t v]\partial_v g - \frac{v^\prime}{t} <\grad^\perp P_{\neq 0} \phi \cdot \grad \tilde u> + \nu \Delta_t g. \label{def:Intermedg}
\end{align}
The form of the dissipation in \eqref{def:v} was made precisely so that a natural dissipation arises on the LHS of \eqref{def:Intermedg}, as opposed to a dangerous forcing term such as $\Delta_t \tilde u_0$. 
The coordinate system is properly adapting itself to the effect of the dissipation that $\tilde u_0$ is experiencing.  
This will also manifest in another way.
Indeed, by \eqref{eq:awesome} and the definition of $\Delta_t$ \eqref{def:Deltat}, we reduce \eqref{def:Intermedg} to
\begin{align} 
\partial_t g + \frac{2g}{t} + g\partial_v g = -\frac{v^\prime}{t} <\grad^\perp P_{\neq 0} \phi \cdot \grad \tilde u> + \nu (v^\prime)^{2} \partial_{vv} g. \label{def:PDEg}
\end{align}
The cancellation 
will turn out to be quite useful, as while $[\partial_t v]$ will decay slowly for long times, $g$ will decay essentially like $O(t^{-2})$ uniformly in $\nu$ (the same rate as $[\partial_t v]$ in Euler \cite{BM13}).
This cancellation in \eqref{def:PDEg} occurs throughout several equations so define: 
\begin{align} 
\tilde{\Delta_t} = \partial_{zz} + (v^\prime)^{2}\left(\partial_{v} - t\partial_z \right)^2.
\end{align}
From \eqref{def:vortPDE}  and \eqref{eq:awesome}, we derive a similar cancellation on the vorticity evolution:  
\begin{align} 
\partial_t f + u \cdot \grad f = \nu \tilde{\Delta_t} f, \label{def:vortNice}
\end{align}
where we are defining the relative velocity field
\begin{align}
u(t,z,v) & = \begin{pmatrix} 
0 \\ g  
\end{pmatrix} 
+ v^\prime \grad^\perp P_{\neq 0}\phi. 
\end{align}
Similarly, if we denote $h = v^\prime - 1$, we derive from \eqref{def:v} that
\begin{align} 
\partial_t h + g\partial_v h & = \frac{-f_0 - h}{t} + \nu \tilde{\Delta_t} h. \label{def:PDEh} 
\end{align} 
Next, we define the unknown $\bar{h} =\frac{-f_0 - h}{t}$ (which quantifies the convergence of the background vorticity) and derive from \eqref{def:vortNice} and \eqref{def:PDEh} 
\begin{align} 
\partial_t \bar{h} + g \partial_v \bar{h} & = -\frac{2}{t}\bar{h} + \frac{v^\prime}{t}<\grad^\perp P_{\neq 0} \phi \cdot \grad f> + \nu \tilde{\Delta_t} \bar{h}. \label{def:barh}
\end{align}
Finally, note from the Biot-Savart law and the definitions of $\bar{h}$ and $g$ we have
\begin{align} 
v^\prime \partial_v g = \bar{h}. \label{eq:barhgRelat}
\end{align} 
The cancellations in \eqref{def:PDEg}, \eqref{def:vortNice}, \eqref{def:PDEh} and \eqref{def:barh} imply that we have effectively modded out the slow decay of $[\partial_t v]$ -- or more precisely, we are moving the coordinate system in such a way so that the slow part of $[\partial_t v]$ is balancing the lower order drift term coming from the $\Delta_t$ in the dissipation. 
The choice of \eqref{def:zv} appears to be the unique coordinate transformation that simultaneously satisfies this property as well as the properties used to obtain inviscid damping in \cite{BM13}. 

Note that from \eqref{def:zv}, the conservation laws of \eqref{def:NSE} and Jensen's inequality, we have
\begin{align} 
\norm{<f>}_p \lesssim \norm{f}_p \lesssim \norm{(v^\prime)^{-1}}^{1/p}_\infty \norm{\omega}_p \lesssim \epsilon\norm{(v^\prime)^{-1}}^{1/p}_\infty, \label{ineq:fLp}
\end{align}
for all $1 \leq p \leq \infty$.
Similarly, from \eqref{def:zv} it follows that 
\begin{align}
\norm{v^\prime(t) - 1}_1 \lesssim \norm{(v^\prime)^{-1}}_\infty \norm{v_y(t)-1}_1 \lesssim \norm{(v^\prime)^{-1}}_\infty\left(\sup_{\tau \leq t}\norm{\omega_0(\tau)}_1\right) \lesssim \epsilon\norm{(v^\prime)^{-1}}_\infty. \label{ineq:L1h}
\end{align}

\subsection{Energy Estimates}
In this section we describe the bootstrap argument that propagates estimates from $O(1)$ times to infinite times. 
The energy estimates will involve two main multipliers: 
\begin{subequations} \label{def:Norms}
\begin{align}
A(t,k,\eta) & = e^{\lambda(t)\abs{k,\eta}^s}\jap{k,\eta}^\sigma J_k(t,\eta), \\
A^\nu(t,k,\eta) & = e^{\lambda(t)\abs{k,\eta}^s}\jap{k,\eta}^{\beta}  \jap{D(t,\eta)}^\alpha \mathbf{1}_{k \neq 0}.  \label{def:Anu}
\end{align}
\end{subequations}
The properties and detailed definitions of both multipliers can be found in \S\ref{sec:PropNorm} below. 
The index $\lambda(t)$ is the main Gevrey$-\frac{1}{s}$ regularity and is chosen to satisfy (recall definition \eqref{eq:T:lambda}),
\begin{subequations} \label{def:lambdat}
\begin{align} 
\lambda(t) & = \frac{3}{4}\lambda + \frac{1}{4}\lambda^\prime, \quad t \leq T_{\lambda,\lambda^\prime} \label{def:lambdashort} \\   
\dot\lambda(t) & = -\frac{\delta_\lambda}{\jap{t}^{2\tilde q}}(1 + \lambda(t)), \quad t > T_{\lambda,\lambda^\prime}
\end{align} 
\end{subequations}
where $\delta_\lambda \approx \lambda - \lambda^\prime$ is a small parameter that ensures $\lambda(t) > \lambda/2 + \lambda^{\prime}/2$ and $\tilde q \in (1/2, s/8 + 7/16)$ is a parameter chosen by the proof in \cite{BM13}. 
Moreover, as in \cite{BM13}, we will assume without loss of generality that $s < 2/3$; for larger $s$, an additional 
Gevrey regularity correction $1/2 < \gamma < 2/3$ can be added to replace the role of $s$ in the ensuing proof.  

The high norm defined by $A$ is the same used in the work on the Euler equations \cite{BM13} (and in particular is independent of $\nu$). 
The corrector $J$ is defined in \S3 of \cite{BM13} and recalled here along with the relevant properties below in \S\ref{sec:InviscidNorm}. 
We will also need the stronger variants $A^R \geq A$, $J^R \geq J$ defined in \eqref{def:AR} below. 
The second multiplier, $A^\nu$, quantifies the enhanced dissipation effect through the inclusion of $D$, defined by 
\begin{align} 
D(t,\eta) & = \frac{1}{3\alpha}\nu \abs{\eta}^3 + \frac{1}{24 \alpha} \nu\left(t^3 - 8\abs{\eta}^3\right)_+. \label{def:D}
\end{align} 
We will choose $\beta + 3\alpha + 8 < \sigma$, so that a sizable regularity gap is maintained between $A$ and $A^\nu$ 
and $\beta > 3\alpha + 2$ (for less crucial technical convenience). 
Hence, for sufficiently high frequencies (depending on $t$), $A$ will define a stronger norm, whereas for lower frequencies $A^\nu$ is stronger (for $k \neq 0$).  
The point of introducing $D$ is so that a uniform bound expresses a powerful enhanced dissipation effect: 
\begin{align}
\norm{P_{\neq 0} f(t)}_{\mathcal{G}^{\lambda(t),\beta}} \lesssim_\alpha \jap{\nu t^3}^{-\alpha} \norm{A^\nu f(t)}_2. \label{ineq:AnuDecay}
\end{align} 
The role this estimate plays in the proof is something like a parabolic analogue of the lossy elliptic Lemma \ref{lem:LossyElliptic}.  

The proof is based on a bootstrap argument. 
Let $[T_{\lambda,\lambda^\prime},T^\star]$ be the largest connected, closed interval with left endpoint $T_{\lambda,\lambda^{\prime}}$ such that the following \emph{bootstrap hypotheses} hold for some fixed, positive constants $K_v$ and $K_D$ depending only on $\lambda,\lambda^\prime,\alpha$ and $s$ determined by the proof:
\begin{itemize} 
\item The `high frequency' controls (the first on the vorticity and the latter four on the coordinate system)
\begin{subequations} \label{ineq:Boot_HiFreq}
\begin{align} 
\norm{A f(t)}_2^2 + \frac{\nu}{10}\int_{T_{\lambda,\lambda^\prime}}^t\norm{\sqrt{-\Delta_L} Af(\tau)}_2^2 d\tau + \int_{T_{\lambda,\lambda^\prime}}^t CK_\lambda + CK_w d\tau  & \leq 4\epsilon^2, \label{ineq:Boot_energyHi} \\
t^{2+2s}\norm{\frac{A}{\jap{\partial_v}^s} \bar{h}}_2^2 + \frac{\nu}{10}\int_{T_{\lambda,\lambda^\prime}}^t \tau^{2+2s}\norm{\partial_v\frac{A}{\jap{\partial_v}^s} \bar{h}}_2^2 d\tau + \int_{T_{\lambda,\lambda^\prime}}^t CK^{v,2}_\lambda + CK^{v,2}_w d\tau  & \leq 4\epsilon^2,  \label{ineq:Boot_bhc} \\ 
t^{4-K_D\epsilon }\norm{g}_{\G^{\lambda,\sigma-6}}^2 + \nu\int_{T_{\lambda,\lambda^\prime}}^t \tau^{4-K_D\epsilon}\norm{\partial_vg}_{\G^{\lambda(\tau),\sigma-6}}^2 d\tau  & \leq 4\epsilon^2, \label{ineq:Boot_gc} \\ 
\int_{T_{\lambda,\lambda^\prime}}^t CK^{v,1}_\lambda + CK^{v,1}_w d\tau & \leq 4K_v\epsilon^2, \label{ineq:Boot_CK1}\\ 
\norm{A^R (v^\prime - 1)}_2^2 + \int_{T_{\lambda,\lambda^\prime}}^t \sum_{i = 1}^2 CCK^i_{\lambda} + CCK^i_{w} d\tau & \leq 4K_v\epsilon^2  \label{ineq:Boot_ARh};
\end{align}
\end{subequations}
\item the enhanced dissipation estimate
\begin{align}
\norm{A^\nu f}_2^2 + \frac{\nu}{10}\int_{T_{\lambda,\lambda^\prime}}^t\norm{\sqrt{-\Delta_L} A^\nu f(\tau)}_2^2 d\tau + \int_{T_{\lambda,\lambda^\prime}}^t CK^{\nu}_\lambda d\tau  & \leq 4\epsilon^2;  \label{ineq:Boot_energyLo} 
\end{align}
\item the low frequency controls
\begin{subequations} \label{ineq:Boot_Low}
\begin{align}
\norm{v^\prime - 1}_\infty & < \frac{3}{4}, \label{ineq:Boot_vprime} \\ 
\norm{v^\prime - 1}_2 + \jap{\nu t}^{1/2}\norm{\partial_v\left(v^\prime - 1\right)}_2 & \leq 4 K_v \epsilon\jap{\nu t}^{-1/4}, \label{ineq:Boot_hdecay} \\ 
\norm{\tilde u_0(t)}_2^2 + \frac{\nu}{10}\int_0^t \norm{v^\prime \partial_v \tilde u_0(\tau) }_2^2 d\tau & \leq 4K_v\epsilon^2. \label{ineq:Boot_KE}
\end{align}
\end{subequations}
\end{itemize}
The numerous `CK' terms (for \emph{Cauchy-Kovalevskaya} since the most important of these terms arise naturally from the weakening of the norms in time)
are given by 
\begin{subequations} \label{def:CKterms}
\begin{align}
CK_\lambda(t) & = -\dot{\lambda}(t)\norm{\abs{\grad}^{s/2}Af}_2^2, \\ 
CK_{w}(t) & = \sum_k\int\frac{\partial_t w_k(t,\eta)}{w_k(t,\eta)} e^{\lambda(t)\abs{k,\eta}^{s}}\jap{k,\eta}^\sigma \frac{e^{\mu\abs{\eta}^{1/2}}}{w_k(t,\eta)}A_{k}(t,\eta)\abs{\hat{f}_k(t,\eta)}^2 d\eta,  \label{def:CKw} \\ 
CK^\nu_\lambda(t) & = -\dot{\lambda}(t)\norm{\abs{\grad}^{s/2}A^\nu f(t)}_2^2, \\ 
CK_{w}^{v,2}(t) & = \jap{t}^{2+2s} \norm{\sqrt{\frac{\partial_t w}{w}} \frac{A}{\jap{\partial_v}^s}  \bar{h}(t) }_2^2, \\ 
CK_{\lambda}^{v,2}(t) & = \jap{t}^{2+2s} (-\dot{\lambda}(t))\norm{\abs{\partial_v }^{s/2}\frac{A}{\jap{\partial_v}^s} \bar{h}(t) }_2^2, \\ 
CK_{w}^{v,1}(t) & = \jap{t}^{2+2s} \norm{\sqrt{\frac{\partial_t w}{w}} \frac{A}{\jap{\partial_v}^s}g(t)}_2^2, \\ 
CK_{\lambda}^{v,1}(t) & = \jap{t}^{2+2s} (-\dot{\lambda}(t))\norm{\abs{\partial_v }^{s/2}\frac{A}{\jap{\partial_v}^s}g(t)}_2^2, 
\end{align}   
\end{subequations}
along with the `coefficient Cauchy-Kovalevskaya' terms
\begin{subequations} \label{def:QL}
\begin{align}
CCK_\lambda^1(t) & =  -\dot\lambda(t)\norm{\abs{\partial_v}^{s/2}A^R\left(1-(v^{\prime})^2\right)(t) }_2^2, \\ 
CCK_w^1(t) & = \norm{\sqrt{\frac{\partial_t w}{w}}A^R\left(1-(v^{\prime})^2\right)(t)}_2^2,  \\
CCK_\lambda^2(t) & = -\dot{\lambda}(t)\norm{\abs{\partial_v}^{s/2}\frac{A^R}{\jap{\partial_v}} v^{\prime\prime}(t) }_2^2, \\
CCK_w^2(t) & = \norm{\sqrt{\frac{\partial_t w}{w}}\frac{A^R}{\jap{\partial_v}} v^{\prime\prime}(t) }_2^2. 
\end{align}
\end{subequations}
For future convenience (see \eqref{def:CKw} above) we also define 
\begin{subequations}
\begin{align} 
\tilde{J}_k(t,\eta) & = \frac{e^{\mu\abs{\eta}^{1/2}}}{w_k(t,\eta)}, \label{def:tildeJB} \\ 
\tilde{A}_k(t,\eta) & = e^{\lambda(t)\abs{k,\eta}^{s}}\jap{k,\eta}^\sigma  \tilde J_k(t,\eta). 
\end{align} 
\end{subequations}
Note that $\tilde A \leq A$ and it turns out that if $\abs{k} \leq  \frac{1}{4}\abs{\eta}$ then $A \lesssim \tilde A$ (see Lemma \ref{basic} below). 

It is easy to show that the quantities on the left hand sides of \eqref{ineq:Boot_HiFreq}, \eqref{ineq:Boot_energyLo} and \eqref{ineq:Boot_Low} take values continuously in time for $t > 0$ due to the analyticity of solutions to \eqref{def:NSE} and 
similarly, using Proposition \ref{prop:InstantReg}, that $T^\star > T_{\lambda,\lambda^\prime}$. 
Note by \eqref{ineq:Boot_hdecay}, \eqref{ineq:Boot_vprime} and \eqref{def:vpp} we have
\begin{align}
\norm{v^{\prime\prime}}_2^2 = \norm{v^\prime \partial_v v^{\prime}}_2^2 \lesssim \norm{v^\prime}_\infty^2\norm{\partial_v v^{\prime}}_2^2 \lesssim \frac{\epsilon^2}{\jap{\nu t}^{3/2}}. \label{ineq:vppdecay}
\end{align} 

The primary step to the proof of Theorem \ref{thm:Main} is the following, which shows that the bootstrap controls can be propagated indefinitely.  

\begin{proposition} \label{prop:BootProp}
For $\epsilon \leq \epsilon_0$ chosen sufficiently small depending only on $s,\lambda,\lambda^\prime$ and $\alpha$,  
the following estimates hold uniformly on $t \in [T_{\lambda,\lambda^\prime},T^\star]$: 
\begin{itemize} 
\item The `high frequency' controls
\begin{subequations} \label{ineq:HiFreq}
\begin{align} 
\norm{A f(t)}_2^2 + \frac{\nu}{10}\int_{T_{\lambda,\lambda^\prime}}^t\norm{\sqrt{-\Delta_L} Af(\tau)}_2^2 d\tau + \int_{T_{\lambda,\lambda^\prime}}^t CK_\lambda + CK_w d\tau  & \leq 2\epsilon^2 \label{ineq:energyHi} \\
t^{2+2s}\norm{\frac{A}{\jap{\partial_v}^s} \bar{h}}_2^2 + \frac{\nu}{10}\int_{T_{\lambda,\lambda^\prime}}^t \tau^{2+2s}\norm{\partial_v\frac{A}{\jap{\partial_v}^s} \bar{h}}_2^2 d\tau + \int_{T_{\lambda,\lambda^\prime}}^t CK^{v,2}_\lambda + CK^{v,2}_w d\tau  & \leq 2\epsilon^2  \label{ineq:bhc} \\ 
t^{4-K_D\epsilon }\norm{g}_{\G^{\lambda,\sigma-6}}^2 + \frac{\nu}{10}\int_{T_{\lambda,\lambda^\prime}}^t \tau^{4-K_D\epsilon}\norm{\partial_vg}_{\G^{\lambda(\tau),\sigma-6}}^2 d\tau  & \leq 2\epsilon^2 \label{ineq:gc} \\ 
\int_{T_{\lambda,\lambda^\prime}}^t CK^{v,1}_\lambda + CK^{v,1}_w d\tau & \leq 2K_v\epsilon^2 \label{ineq:CK1}\\ 
\norm{A^R (v^\prime - 1)}_2^2 + \int_{T_{\lambda,\lambda^\prime}}^t \sum_{i = 1}^2 CCK^i_{\lambda} + CCK^i_{w} d\tau & \leq 2K_v\epsilon^2  \label{ineq:ARh};
\end{align}
\end{subequations}
\item the enhanced dissipation estimate
\begin{align}
\norm{A^\nu f}_2^2 + \frac{\nu}{10}\int_{T_{\lambda,\lambda^\prime}}^t\norm{\sqrt{-\Delta_L} A^\nu f(\tau)}_2^2 d\tau + \int_{T_{\lambda,\lambda^\prime}}^t CK^{\nu}_\lambda d\tau  & \leq 2\epsilon^2;  \label{ineq:energyLo} 
\end{align}
\item the low frequency controls
\begin{subequations} \label{ineq:Low}
\begin{align}
\norm{v^\prime - 1}_\infty & < \frac{5}{8} \label{ineq:vprime} \\ 
\norm{v^\prime - 1}_2 + \jap{\nu t}^{1/2}\norm{\partial_v\left(v^\prime - 1\right)}_2 & \leq 2 K_v \epsilon\jap{\nu t}^{-1/4}, \label{ineq:hdecay} \\ 
\norm{\tilde u_0(t)}_2^2 + \frac{\nu}{10}\int_0^t \norm{v^\prime \partial_v \tilde u_0(\tau) }_2^2 d\tau & \leq 2\epsilon^2, \label{ineq:KE} \\ 
\norm{f_0}_2 & \leq \frac{2K_v \epsilon}{\jap{\nu t}^{1/4}}.  \label{ineq:LoDecay}  
\end{align}
\end{subequations}
\end{itemize}
It therefore follows that $T^\star = \infty$. 
\end{proposition}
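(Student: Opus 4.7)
\subsection*{Proof strategy for Proposition \ref{prop:BootProp}}

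The plan is a continuity/bootstrap argument: each inequality in \eqref{ineq:HiFreq}--\eqref{ineq:Low} is derived as an energy estimate on the relevant evolution equation from \S\ref{sec:CoordinateTrans}, with the multipliers tailored to it, and one uses the bootstrap hypotheses \eqref{ineq:Boot_HiFreq}--\eqref{ineq:Boot_Low} on the right-hand side together with the smallness of $\epsilon$ to close with the constant $2$ in place of $4$. Since every quantity on the left-hand side varies continuously in $t$ and Proposition \ref{prop:InstantReg} gives the initial estimate at $t=T_{\lambda,\lambda^\prime}$ with a factor $\epsilon$ (i.e. with constant $1$), the improvement to constant $2$ forces $T^\star=\infty$ by a standard open/closed argument.

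The estimates are derived in the following order, dictated by the coupling structure. First, I would establish the \emph{low-frequency} controls \eqref{ineq:Low}: \eqref{ineq:KE} is the $L^2$ momentum estimate on $\tilde u_0$ using \eqref{def:moment1}, \eqref{ineq:LoDecay} follows from the heat-type inequality for $f_0$ using \eqref{def:vortNice} with $k=0$ (and exploiting the enhanced dissipation bound \eqref{ineq:AnuDecay} to handle the transport $\langle u\cdot\nabla f\rangle$ on the right-hand side), \eqref{ineq:hdecay} comes from $L^2$ and $\dot H^1$ parabolic estimates on the scalar equation \eqref{def:PDEh} for $h$ driven by $\bar h$, and \eqref{ineq:vprime} is recovered from \eqref{ineq:hdecay} together with \eqref{ineq:ARh} through a Gevrey/Sobolev embedding. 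Next, I would treat the \emph{high-frequency} coordinate controls \eqref{ineq:ARh}, \eqref{ineq:gc}, \eqref{ineq:CK1}, \eqref{ineq:bhc} by running weighted energy estimates in the $\mathcal G^{\lambda(t);s}$ norm on \eqref{def:PDEg}, \eqref{def:PDEh}, \eqref{def:barh}, which are essentially the Euler estimates from \cite{BM13} supplemented by the dissipation terms $\nu\,\tilde\Delta_t$; the $\nu$ terms are nonnegative after integration by parts thanks to the cancellation built into the coordinate choice \eqref{def:zv} and actually produce the favorable $\nu$-integrals on the left-hand side.

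The estimate \eqref{ineq:energyHi} on $Af$ is obtained by testing \eqref{def:vortNice} against $A^2 f$, writing
\begin{equation*}
\tfrac{1}{2}\tfrac{d}{dt}\|Af\|_2^2 + \nu\|\sqrt{-\tilde\Delta_t}\,Af\|_2^2 + CK_\lambda + CK_w = \langle Af,\,A(u\cdot\nabla f)\rangle + \nu\langle Af,\,[A,\tilde\Delta_t]f\rangle,
\end{equation*}
expanding the transport nonlinearity by a paraproduct decomposition in the $(z,v)$ frequencies, and quoting the Euler-type commutator and reaction estimates of \cite{BM13} to bound the $k\neq 0$ part of the contribution by $\epsilon\cdot(\text{CK-terms})$, absorbing it via the CK integrals. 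The viscous commutator $[A,\tilde\Delta_t]$ is handled by splitting $\tilde\Delta_t-\Delta_L$ into lower-order terms controlled by $h$ and $v''$ (via \eqref{ineq:Boot_ARh}, \eqref{ineq:vppdecay}) and the Cauchy-Kovalevskaya loss. The ``contribution from $k=0$'' in the nonlinearity $g\partial_v f$ is where the linear term $\frac{-f_0-h}{t}$-type coupling enters and where \eqref{ineq:LoDecay} feeds back into the high-frequency estimate.

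The essential new estimate is \eqref{ineq:energyLo} on $A^\nu f$. Testing the $P_{\neq 0}$ part of \eqref{def:vortNice} against $(A^\nu)^2 f$ gives
\begin{equation*}
\tfrac{1}{2}\tfrac{d}{dt}\|A^\nu f\|_2^2 + \nu\|\sqrt{-\Delta_L}\,A^\nu f\|_2^2 + CK_\lambda^\nu + \Bigl\langle A^\nu f,\,\tfrac{\partial_t D^\alpha}{\langle D\rangle^\alpha}A^\nu f\Bigr\rangle = \langle A^\nu f,\,A^\nu P_{\neq 0}(u\cdot\nabla f)\rangle + (\text{viscous commutator}).
\end{equation*}
The left-hand side $\partial_t D$ term is nonnegative and provides the decay \eqref{ineq:AnuDecay} needed elsewhere; crucially it is designed so that where $D$ is increasing (i.e. away from the critical times $t\sim|\eta|$), the flow is being strongly dissipated. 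The nonlinear term is expanded by paraproduct, and the main difficulty is isolating the resonant $k=0$ reaction terms of the form $(A^\nu\,\text{HI})\cdot(g,\,\bar h,\,\text{LO})$, in which a high-frequency $k\neq 0$ piece of $f$ interacts with a low-frequency $x$-independent coefficient. These are controlled by trading a little Gevrey regularity from $A\geq A^\nu\langle D\rangle^{\alpha}\jap{k,\eta}^{\sigma-\beta-3\alpha}$ on the $g,\bar h$ factors (using the gap $\sigma-\beta-3\alpha>8$) against the loss $\jap{D}^\alpha$, exactly as in the heuristic \eqref{ineq:AnuDecay}. The non-resonant $(\text{LO},\text{HI})$ terms where the $x$-dependent high-frequency contains the $A^\nu$ weight are the counterpart where one must verify that $x$-independent enstrophy cannot efficiently feed $x$-dependent modes at long times---this requires that the coefficient from the paraproduct ratio $A^\nu(k,\eta)/A^\nu(k,\eta-\xi)$ be absorbed into CK terms using the time anisotropy built into $D(t,\eta)$.

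The main obstacle is precisely this last estimate: proving that the $A^\nu$ energy identity closes uniformly in $\nu$ in the presence of the nonlinear transport and the strongly anisotropic time-frequency weight $D$. Concretely, the hard part is controlling the interaction between the enhanced dissipation weight $\langle D\rangle^\alpha$ (which is degenerate at the critical times $t\sim|\eta|/k$) and the paraproduct reaction of the coordinate-system coefficients $g,\bar h,v'-1$ against $P_{\neq 0}f$, where one must simultaneously exploit the regularity gap $\sigma-\beta-3\alpha$, the CK integrals produced by $-\dot\lambda$ and $\partial_t w/w$, and the extra factor $\nu t^2$-type smallness coming from \eqref{ineq:Boot_bhc}--\eqref{ineq:Boot_gc}, all without ever appealing to bounds of size $\nu^{-c}$.
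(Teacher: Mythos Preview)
Your overall bootstrap architecture is right, and you correctly identify that the Euler nonlinearity in \eqref{ineq:energyHi} and the coordinate estimates \eqref{ineq:bhc}--\eqref{ineq:ARh} are essentially those of \cite{BM13} plus a dissipation error. But two of your key mechanisms are wrong or missing, and the proof would not close as written.

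\textbf{The $\partial_t D$ term has the wrong sign in your account.} In the $A^\nu$ energy identity, the contribution from $\partial_t\jap{D}^\alpha$ appears on the \emph{right}-hand side with a \emph{positive} sign (since $\partial_t D\geq 0$): it is a growing term of size $\lesssim \nu t^2\|\mathbf{1}_{t\geq 2|\partial_v|}A^\nu f\|_2^2$, not a favorable one. The entire point of the construction of $D$ is that this growth is strictly dominated by the leading dissipation $\nu\|\sqrt{-\Delta_L}A^\nu f\|_2^2$ on the set $\{t\geq 2|\eta|\}$ (where $|\eta-kt|\gtrsim t$ for $k\neq 0$), leaving a fraction of the dissipation behind. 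Only \emph{after} that absorption does the uniform bound on $\|A^\nu f\|_2$ translate into the decay \eqref{ineq:AnuDecay}. Your description reverses this causality.

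\textbf{You miss the hardest coupling in the high-norm dissipation error.} In \eqref{ineq:energyHi}, the dangerous piece of $\nu\langle Af,\,A[(1-(v')^2)(\partial_v-t\partial_z)^2 f]\rangle$ is the $k\neq 0$ ``HL'' paraproduct in $v$: the coefficient $1-(v')^2$ sits at high $v$-frequency, $f$ at low, and near the resonant time $t\sim\eta/k$ the leading dissipation $\sqrt{-\Delta_L}$ degenerates while the error still carries a factor $t^2$ from $(\partial_v-t\partial_z)^2$. The bootstrap controls \eqref{ineq:Boot_ARh} and \eqref{ineq:vppdecay} alone do \emph{not} close this; the paper uses the enhanced dissipation hypothesis \eqref{ineq:Boot_energyLo} on the low-frequency $f$-factor (via \eqref{ineq:AnuDecay}) to kill the excess $t^{3/2}$, together with the $CCK$ integrals on the coefficient. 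So the coupling runs the opposite way from what you wrote: it is \eqref{ineq:Boot_energyLo}, not \eqref{ineq:LoDecay}, that feeds into the high-norm dissipation error, and it enters through the viscous commutator, not the Euler nonlinearity.

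Finally, for $\mathcal{E}_2$ in \eqref{ineq:energyLo} (the $v'\nabla^\perp P_{\neq 0}\phi\cdot\nabla f$ part), the commutator trick does \emph{not} work because $A^\nu$ jumps wildly in the discrete $z$-frequency; instead one places the full $A^\nu$ on $\nabla^\perp P_{\neq 0}\phi$ via the product Lemma~\ref{lem:AnuProd} and the elliptic Lemma~\ref{lem:LossyEllip2}, and the lost derivative on $\nabla f$ is absorbed by the \emph{high} norm $A$ through the gap $\sigma\geq\beta+3\alpha+1$. Your sketch of trading regularity against $\jap{D}^\alpha$ on the $g,\bar h$ factors is the right idea only for $\mathcal{E}_1$, where the commutator estimate \eqref{ineq:Dcomm} on $D$ (which crucially requires the $z$-frequency to be fixed) is what makes it work.
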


The proof of Proposition \ref{prop:BootProp} constitutes the majority of our work; let us briefly outline it here. 
Except for the presence of the dissipation integrals, the estimates \eqref{ineq:HiFreq} are essentially the same as those propagated in the bootstrap argument for 2D Euler in \cite{BM13} (along with \eqref{ineq:vprime}).
Since we want to be able to send $\nu \rightarrow 0$ independently of $t$ and $\epsilon$, the dissipation cannot help to get these estimates, and so the proof of \eqref{ineq:HiFreq} must also contain all the work done on the inviscid problem in \cite{BM13} (we will try to repeat as little as possible).  
The main new challenge to deducing \eqref{ineq:HiFreq} for the Navier-Stokes equations \eqref{def:NSE} is ensuring that the presence of the variable coefficients in $\tilde{\Delta_t}$ does not create growth or arrest the decay estimates. 
In \eqref{ineq:energyHi}, one could worry that having rapid variations in the $v$-dependent enhanced dissipation rate could amplify gradients of $f$. 
Moreover, the fact that the enhanced dissipation slows down near the critical times will cause additional problems here.
We will use \eqref{ineq:Boot_energyLo} to provide the crucial decay necessary to handle the effect of high frequencies in the coefficients of $\tilde{\Delta_t}$ strongly forcing non-zero modes in $z$. 
This is detailed in \S\ref{sec:HighNorm} below.  
The NSE-specific challenge to deducing \eqref{ineq:bhc}, \eqref{ineq:gc} and \eqref{ineq:ARh} is that having a $v$-dependent diffusivity in $\tilde{\Delta_t}$ can induce effective bulk transport in $v$, a phenomenon that is mostly associated with low frequencies in the derivative of the coefficients. The effect is nonlinear, as the coefficients are coupled to $f$, but is controlled by the low frequency decay estimate \eqref{ineq:hdecay}. This is detailed in \S\ref{sec:CoordHiReg}. 

The proof of the enhanced dissipation estimate \eqref{ineq:energyLo} can be found in \S\ref{sec:ED}.
The main issue confronted in \S\ref{sec:ED} is that there are terms in \eqref{def:vortNice} which are linear in the $k\neq 0$ frequencies. For these terms, we cannot use the enhanced dissipation and so we must still carefully use some of the structural properties of the Euler nonlinearity (e.g. the transport structure), as if $A^\nu f$ were the high norm. As a result, the chosen $A^\nu$ still needs to satisfy certain amenable properties. Moreover, these linear terms also express the strong forcing of non-zero frequencies in $z$ by the vastly larger zero frequencies in $z$. 
The regularity gap between $A$ and $A^\nu$ was chosen so that we could use \eqref{ineq:Boot_energyHi} to control this effect. 

The low frequency controls \eqref{ineq:Low} are relatively straightforward consequences of the slow viscous dissipation of $z$-independent (equivalently $x$-independent) quantities and are proved in \S\ref{sec:LoFreq} using the a priori high norm controls from \eqref{ineq:Boot_energyHi} and \eqref{ineq:Boot_energyLo} and standard Moser iteration techniques.

\subsection{Conclusion of the proof} \label{sec:ProofConcl}
To conclude the proof of Theorem \ref{thm:Main} from Proposition \ref{prop:BootProp} we need to translate the information back
into information on the original unknowns, $\omega$ and $\psi$. 
This is not trivial, but we may follow closely the corresponding steps in \cite{BM13} with some minor alterations. 
By Proposition \ref{prop:BootProp}, we have the global uniform bounds 
\begin{subequations} \label{ineq:apriori}
\begin{align} 
\norm{f(t)}_{\G^{\lambda(t),\sigma}}^2 + \jap{\nu t^3}^{2\alpha}\norm{P_{\neq 0} f(t)}_{\G^{\lambda(t),\beta}}^2 + \jap{\nu t}^{1/2}\norm{f_0(t)}_2^2  & \lesssim \epsilon^2  \\ 
\norm{\tilde u_0}^2_{\G^{\lambda(t),\sigma}} + \jap{\nu t^3}^{2\alpha} \jap{t}^{4}\norm{P_{\neq 0}\phi}^2_{\G^{\lambda(t),\beta-3}} 
 + \norm{v^\prime - 1}_{\G^{\lambda(t),\sigma}}^2 & \lesssim \epsilon^2. 
\end{align} 
\end{subequations}
Define $\lambda_\infty = \lim_{t \rightarrow \infty} \lambda(t)$. 
In order to complete the proof of Theorem \ref{thm:Main}, we undo the change of coordinates in $v$, switching to the coordinates $(z,y)$.
Writing $\tilde{\omega}(t,z,y) = f(t,z,v) = \omega(t,x,y)$ and $\tilde\psi(t,z,y) = \phi(t,z,v) = \psi(t,x,y)$, one derives from \eqref{def:NSE}, as in \S\ref{sec:CoordinateTrans}, that 
\begin{align}
\partial_t \tilde{\omega} + \grad_{z,y}^\perp P_{\neq 0}\tilde\psi \cdot \grad_{z,y}\tilde{\omega} = \nu \partial_{zz}\tilde{\omega} + \nu\left(\partial_y - t \partial_y U^x_0\partial_z\right)^2\tilde{\omega}. \label{ineq:2DhNSE}
\end{align}
Similarly, denoting $\tilde U(t,z,y) = \tilde u(t,z,v) = U^x(t,x,y)$, we deduce from \eqref{def:NSEMomentum}
\begin{align}
\partial_t U_0^x + <\grad^\perp_{z,y} P_{\neq 0} \tilde \psi \cdot \grad_{z,y} \tilde U> = \nu \partial_{yy} U^x_0. \label{def:Umomen}
\end{align}
We may follow the argument in \S2.3 of \cite{BM13}, apply the appropriate Gevrey inverse function theorem~\cite[Lemma A.5]{BM13} and the Gevrey composition inequality~\cite[Lemma A.4]{BM13} to deduce from \eqref{ineq:apriori} and \eqref{ineq:gradulossy} below that 
the estimates on $f,\phi$ and $\tilde u$ 
imply estimates on $\tilde{\omega},\tilde \psi$ and $\tilde U$.   
The main issue is inverting the coordinate transform $y = y(t,v)$ and deducing good Gevrey regularity estimates on $v(t,v)-y$ and $y(t,v)-v$. 
The only difference from \cite{BM13} is in establishing the $L^2$ control on $v(t,y) - y$, which here requires a straightforward estimate 
on the forced heat equation \eqref{def:v}.
We omit the details for brevity and conclude that, for $\epsilon$ sufficiently small, there exists some $\lambda_\infty^{\prime\prime\prime} \in (\lambda^\prime,\lambda_\infty)$ such that 
\begin{subequations} \label{ineq:hpsitildeest}
\begin{align} 
\norm{\tilde{\omega}(t)}^2_{\G^{\lambda_\infty^{\prime\prime\prime}}} + \jap{\nu t^3}^{2\alpha}\norm{P_{\neq 0}\tilde{\omega}(t)}^2_{\G^{\lambda_\infty^{\prime\prime\prime}}} + \jap{\nu t^3}^{2\alpha}\jap{t}^{4}\norm{P_{\neq 0}\tilde \psi(t)}^2_{\G^{\lambda_\infty^{\prime\prime\prime}}} + \jap{\nu t}^{1/2}\norm{\tilde{\omega}(t)}^2_2  &\lesssim \epsilon^2 \\
\norm{\tilde U_0}^2_{\G^{\lambda_\infty^{\prime\prime\prime}}} + \jap{t}^2 \jap{\nu t^3}^{2\alpha} \norm{P_{\neq 0} \grad_{z,y} \tilde U(t)}^2_{\G^{\lambda_\infty^{\prime\prime\prime}}} & \lesssim \epsilon^2. 
\end{align} 
\end{subequations}
In view of the definitions \eqref{def:zv} and \eqref{def:phi}, 
estimate \eqref{ineq:hpsitildeest} above implies the claimed bounds on the vorticity
\eqref{main-omega}, \eqref{ineq:thmED} and \eqref{ineq:thmSD}.  
The $L^2$ estimates \eqref{ineq:ID1} and \eqref{ineq:ID2} follow from shifting in the $z$ variable and using the decay estimates on $\psi$ in \eqref{ineq:hpsitildeest}.
From \eqref{ineq:2DhNSE} and \eqref{ineq:hpsitildeest} together, we further deduce \eqref{ineq:scattering}. 
Similarly, \eqref{def:Umomen} and \eqref{ineq:hpsitildeest} imply \eqref{ineq:P0Ux}. 
This completes the proof of Theorem \ref{thm:Main}.

\section{Properties of the norms} \label{sec:PropNorm}
In this section we detail the properties of the two multipliers $A$ and $A^\nu$. 
\subsection{Inviscid multiplier \texorpdfstring{$A$}{A}} \label{sec:InviscidNorm}
In \S3 of \cite{BM13}, the corrector $J$ is designed to deal with a possible frequency cascade caused by hydrodynamic echoes,
the weakly nonlinear manifestation of the Orr mechanism \cite{Boyd83,Orr07}.  
We briefly recall the definition here. 
For a constant $\mu > 0$ fixed below, we define 
\begin{align}
J_k(t,\eta) = \frac{e^{\mu \abs{\eta}^{1/2}}}{w_k(t,\eta)} + e^{\mu \abs{k}^{1/2}}, \label{def:J}
\end{align}  
for $w_k(t,\eta)$ specified next as an estimated `worst possible growth'. 
First recall the notation for critical intervals and times in \S\ref{sec:Notation}. 
For the remainder of \S\ref{sec:InviscidNorm} assume for notational simplicity that $\eta > 0$.
Finally, define $\kappa \in (0, 1/2)$ a fixed constant. 

Let $w_{NR}$ be a non-decreasing function of time with $w_{NR}(t,\eta) = 1  $ for $t \geq  2\eta $. 
Further, set $w_{NR}(t,\eta) = w_{NR}(t,10)$ for $\abs{\eta} < 10$. 
For $ k \geq 1$, we assume that $w_{NR} (t_{k-1,\eta})  $ was computed and for $k=1,2,3,..., E(\sqrt{\eta}) $, we define 
\begin{subequations} \label{def:wNR}
\begin{align} 
w_{NR}(t,\eta) &=   \Big( \frac{k^2}{\eta}   \left[ 1 + b_{k,\eta} |t-\frac{\eta}k | \right]   \Big)^{C\kappa}  w_{NR} (t_{k-1,\eta}),  \quad& 
  \quad  \forall t \in  I^R_{k,\eta} =  \left[ \frac{\eta}k ,t_{k-1,\eta}  \right], \\ 
w_{NR}(t,\eta) &=   \Big(1 + a_{k,\eta} |t-\frac{\eta}k |   \Big)^{-1-C\kappa}  w_{NR} \left(\frac{\eta}k\right),  \quad& 
  \quad \forall  t \in  I^L_{k,\eta} =  \left[ t_{k,\eta}  , \frac{\eta}k   \right].  
\end{align} 
\end{subequations}  
The constant $b_{k,\eta}  $   is chosen to ensure that $ \frac{k^2}{\eta}   \left[ 1 + b_{k,\eta} |t_{k-1,\eta}-\frac{\eta}k | \right]  =1$. Hence for $k \geq2$, we have 
\begin{align} \label{bk} 
 b_{k,\eta} = \frac{2(k-1)}{k} \left(1 - \frac{k^2}{\eta} \right)
\end{align} 
and $b_{1,\eta} = 1 - 1/\eta$. 
Similarly,  $a_{k,\eta}$ is chosen to ensure $ \frac{k^2}{\eta}\left[ 1 + a_{k,\eta} |t_{k,\eta}-\frac{\eta}k | \right] = 1$, which implies
\begin{align} \label{ak} 
 a_{k,\eta} = \frac{2(k+1)}{k} \left(1 - \frac{k^2}{\eta} \right). 
\end{align} 

Finally, we take $ w_{NR}$ to be constant on the interval $[0, t_{E(\sqrt{\eta}),\eta }] $, namely 
 $ w_{NR}(t,\eta)   = w(t_{E(\sqrt{\eta}),\eta } ,\eta)  $  for $ t \in [0, t_{E(\sqrt{\eta}),\eta }].$

On each interval $I_{k,\eta}  $, we define $w_R(t,\eta) $ by 
\begin{subequations} \label{def:wR}
\begin{align} 
w_R(t,\eta) &=   \frac{k^2}{\eta}   \left( 1 + b_{k,\eta} \abs{t-\frac{\eta}k} \right)w_{NR}(t,\eta),   \quad& 
  \quad \forall  t \in  I^R_{k,\eta} =  \left[ \frac{\eta}k ,t_{k-1,\eta}  \right], \\ 
w_R(t,\eta) &=   \frac{k^2}{\eta}   \left( 1 + a_{k,\eta} \abs{t-\frac{\eta}k} \right)   w_{NR}(t,\eta),    \quad& 
  \quad \forall  t \in  I^L_{k,\eta} =  \left[ t_{k,\eta}  , \frac{\eta}k   \right].   
\end{align}
\end{subequations}    
Due to the choice of $ b_{k,\eta}$  and $a_{k,\eta}$, we get that 
$w_R( t_{k,\eta}  ,\eta) =w_{NR}( t_{k,\eta}  ,\eta)      $  and $ w_R(  \frac{\eta}{k}  ,\eta) 
 =\frac{k^2}{\eta} 
 w_{NR}(  \frac{\eta}{k}   ,\eta)   $.

To define the full $w_k(t,\eta)$,  we then have 
\begin{align} 
w_k(t,\eta) =
\left\{
\begin{array}{ll} 
w_k(t_{E(\sqrt{{\eta}}),\eta},\eta) & t < t_{E(\sqrt{\eta}),\eta} \\
w_{NR}(t,\eta) & t \in [t_{E(\sqrt{\eta}),\eta},2\eta]\setminus I_{k,\eta} \\
w_{R}(t,\eta) & t \in I_{k,\eta} \\
1 & t \geq 2\eta. 
\end{array}
\right. \label{def:wk}
\end{align} 
Note that $w_k(t,\eta)$ is Lipschitz continuous in time. 
This completes the construction of the  $w_k(t,\eta)$ which appears in the $J$ defined above in \eqref{def:J}.  

We also define $J^R(t,\eta)$ and $A^R(t,\eta)$ to assign resonant regularity at \emph{every} critical time: 
\begin{align} 
J^R(t,\eta) & =
\left\{
\begin{array}{ll} e^{\mu\abs{\eta}^{1/2}} w^{-1}_R(t_{E(\sqrt{{\eta}}),\eta},\eta) + 1 & t < t_{E(\sqrt{\eta}),\eta} \\
  e^{\mu\abs{\eta}^{1/2}}w^{-1}_{R}(t,\eta) + 1 & t \in [t_{E(\sqrt{\eta}),\eta},2\eta] \\
e^{\mu\abs{\eta}^{1/2}} + 1 & t \geq 2\eta,
\end{array}
\right. \nonumber \\ 
A^R(t,\eta) & = e^{\lambda(t)\abs{\eta}^s}\jap{\eta}^\sigma J^R(t,\eta). \label{def:AR}
\end{align}
From \eqref{def:wR} we get that $A^R(t,\eta) \geq A_0(t,\eta)$ and since the zero frequency is always non-resonant from \eqref{def:wk}, near the critical times, $A^R$ can be as much as a factor of $\abs{\eta}$ larger. 

We next recall basic properties of $w$, $J$ and $A$. 
The proofs can be found in \S3 of \cite{BM13}. 
The first shows the origin of the requirement $s > 1/2$. 
\begin{lemma}[From \cite{BM13}] \label{basic}
There exists some constants $\mu > 0$, $C_0 >0$ such that for all $\abs{\eta} \geq 1$ there holds
\begin{align}  \label{w-grwth}
\frac1{w_k(0,\eta)}  =  \frac1{w_k( t_{E(\sqrt{\eta}),\eta},\eta)} 
  = \frac{C_0}{\eta^{\mu/8}}  e^{\frac{\mu}{2} \sqrt{\eta} } + o_{\abs{\eta} \rightarrow \infty}\left(\frac{C_0}{\eta^{\mu/8}}  e^{\frac{\mu}{2} \sqrt{\eta} }\right). 
\end{align} 
\end{lemma}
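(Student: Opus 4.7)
The plan is to reduce the claim to an explicit product formula and then evaluate it via Stirling. First, from the constant-in-time convention in \eqref{def:wk}, $w_k(0,\eta) = w_k(t_{E(\sqrt{\eta}),\eta},\eta)$. Moreover, the constants $a_{k,\eta}, b_{k,\eta}$ are designed so that $w_R = w_{NR}$ at the endpoints $t_{k,\eta}$ and $t_{k-1,\eta}$ (a direct check from \eqref{def:wR}); consequently $w_k(t_{E(\sqrt{\eta}),\eta},\eta) = w_{NR}(t_{E(\sqrt{\eta}),\eta},\eta)$ independently of $k$, which is consistent with the stated claim. The whole question thus reduces to computing the non-resonant profile at $t = t_{E(\sqrt{\eta}),\eta}$.

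Next, I would compute this quantity by telescoping the one-step ratios. Evaluating the definition of $w_{NR}$ on $I^R_{k,\eta}$ at $t = \eta/k$ gives $w_{NR}(\eta/k,\eta) = (k^2/\eta)^{C\kappa}\, w_{NR}(t_{k-1,\eta},\eta)$, since the factor $[1 + b_{k,\eta}|t-\eta/k|]$ equals $1$ there. The defining relation for $a_{k,\eta}$ forces $1 + a_{k,\eta}|t_{k,\eta}-\eta/k| = \eta/k^2$, so on $I^L_{k,\eta}$,
\begin{align*}
w_{NR}(t_{k,\eta},\eta) = \left(\frac{k^2}{\eta}\right)^{1+C\kappa} w_{NR}(\eta/k,\eta) = \left(\frac{k^2}{\eta}\right)^{1+2C\kappa} w_{NR}(t_{k-1,\eta},\eta).
\end{align*}
Telescoping from $t_{0,\eta} = 2\eta$ (where $w_{NR} = 1$ by \eqref{def:wNR}) yields, with $N := E(\sqrt{\eta})$,
\begin{align*}
\frac{1}{w_k(0,\eta)} = \left(\frac{\eta^N}{(N!)^2}\right)^{1+2C\kappa}.
\end{align*}

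The last step is a careful application of Stirling's formula. Writing $\sqrt{\eta} = N + \varepsilon$ with $\varepsilon \in [0,1)$, the expansion $(1 + 2\varepsilon/N + \varepsilon^2/N^2)^N = e^{2\varepsilon}(1 + O(1/N))$ cancels against the $e^{-2\varepsilon}$ factor in $e^{2N} = e^{2\sqrt{\eta}}e^{-2\varepsilon}$, producing
\begin{align*}
\frac{\eta^N}{(N!)^2} = \frac{e^{2\sqrt{\eta}}}{2\pi\sqrt{\eta}}\bigl(1 + O(\eta^{-1/2})\bigr).
\end{align*}
Raising to the power $1 + 2C\kappa$ and choosing $\mu := 4(1+2C\kappa)$ and $C_0 := (2\pi)^{-\mu/4}$ yields \eqref{w-grwth}. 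The only step requiring real care will be the Stirling bookkeeping when $\sqrt{\eta}$ is not an integer; everything else follows by direct computation from the definitions. Note that $\mu$ can be made arbitrarily close to $4$ by taking $\kappa$ small, and the Gevrey-$\frac{1}{s}$ weight $e^{\lambda|k,\eta|^s}$ absorbs the $e^{\mu\sqrt{\eta}/2}$ growth precisely when $s > 1/2$ --- this explains why this threshold appears throughout the paper.
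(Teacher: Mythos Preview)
Your argument is correct. The paper does not actually prove this lemma here; it is quoted verbatim from \cite{BM13}, so there is no in-paper proof to compare against. Your approach --- telescoping the explicit one-step ratio $w_{NR}(t_{k,\eta},\eta) = (k^2/\eta)^{1+2C\kappa}\,w_{NR}(t_{k-1,\eta},\eta)$ to obtain the closed form $(\eta^N/(N!)^2)^{1+2C\kappa}$ and then applying Stirling with the bookkeeping for $\sqrt{\eta}\notin\Integer$ --- is the natural (and essentially the only) route from the recursive definition \eqref{def:wNR}, and it matches the computation in \cite{BM13}. The identification $\mu = 4(1+2C\kappa)$, $C_0 = (2\pi)^{-\mu/4}$ is exactly right, and your closing remark about the $s>1/2$ threshold is precisely the point the paper makes immediately after stating the lemma.
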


The next lemma emphasizes the technical advantage of having well-separated critical times.  

\begin{lemma}[From \cite{BM13}]  \label{lem:wellsep}
Let $\xi,\eta$ be such that there exists some $K \geq 1$ with $\frac{1}{K}\abs{\xi} \leq \abs{\eta} \leq K\abs{\xi}$ and let $k,n$ be such that $t \in I_{k,\eta}$ and $t \in I_{n,\xi}$  (note that $k \approx n$).  
Then at least one of the following holds:
\begin{itemize} 
\item[(a)] $k = n$ (almost same interval); 
\item[(b)] $\abs{t - \frac{\eta}{k}} \geq \frac{1}{10 K}\frac{\abs{\eta}}{k^2}$ and $\abs{t - \frac{\xi}{n}} \geq \frac{1}{10 K}\frac{\abs{\xi}}{n^2}$ (far from resonance);
\item[(c)] $\abs{\eta - \xi} \gtrsim_K \frac{\abs{\eta}}{\abs{n}}$ (well-separated). 
\end{itemize}
\end{lemma}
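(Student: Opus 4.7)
The plan is a trichotomy argument. If $k = n$ we land in case (a); otherwise I show that, provided (c) fails, case (b) must hold. The underlying idea is that when $k \neq n$ and $\abs{\eta - \xi}$ is small compared to $\abs{\eta}/n$, the resonance points $\eta/k$ and $\xi/n$ are separated by at least a definite fraction of the (comparable) half-widths of $I_{k,\eta}$ and $I_{n,\xi}$, so that any $t$ lying in both intervals is forced to sit at a comparable distance from each resonance.

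First I would verify the parenthetical $k \approx n$. From $t \in I_{k,\eta} = [t_{k,\eta}, t_{k-1,\eta}]$ the explicit formulas force $\abs{\eta}/(k+1) \leq t \leq \abs{\eta}/(k-1)$ (for $k \geq 2$, with an analogous bound for $n$). Combined with $\abs{\eta}/K \leq \abs{\xi} \leq K \abs{\eta}$ this pins down $k/n \in [c_K, C_K]$ for positive constants depending only on $K$; in particular the half-widths $\abs{\eta}/(2k(k\pm1))$ and $\abs{\xi}/(2n(n\pm 1))$ are all comparable, up to $K$-dependent factors, to a single quantity $h$ with $h \approx \abs{\eta}/k^2 \approx \abs{\xi}/n^2$. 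If $k=n$ the conclusion is case (a), so assume $k \neq n$ and hence $\abs{n-k} \geq 1$.

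Next, suppose (c) also fails, i.e.\ $\abs{\eta - \xi} < c_K \abs{\eta}/n$ with $c_K = c_K(K)$ a small constant to be fixed. Using the identity
\begin{equation*}
\frac{\eta}{k} - \frac{\xi}{n} = \frac{(n-k)\eta}{kn} + \frac{\eta - \xi}{n},
\end{equation*}
the first term has magnitude at least $\abs{\eta}/(kn) \gtrsim_K h$ since $\abs{n-k} \geq 1$, while the second contributes at most $c_K \abs{\eta}/n^2 \lesssim_K c_K h$. Choosing $c_K$ small enough then yields $\abs{\eta/k - \xi/n} \geq M_K \cdot h$ with an implicit constant $M_K$ that can be made as large as we wish, and in particular substantially exceeding the half-widths of both $I_{k,\eta}$ and $I_{n,\xi}$.

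For the final step I would argue geometrically. Without loss of generality $\eta/k < \xi/n$; then the lower bound on $\abs{\eta/k - \xi/n}$ ensures the overlap $I_{k,\eta} \cap I_{n,\xi}$ is confined to the narrow region $[t_{n,\xi}, t_{k-1,\eta}]$ (the other two candidate extremes $t_{k,\eta}$ and $t_{n-1,\xi}$ being inactive since the centers are too far apart). Any $t$ in this overlap then satisfies
\begin{equation*}
t - \frac{\eta}{k} \geq t_{n,\xi} - \frac{\eta}{k} = \left( \frac{\xi}{n} - \frac{\eta}{k} \right) - \frac{\abs{\xi}}{2n(n+1)} \gtrsim_K h \gtrsim_K \frac{\abs{\eta}}{k^2},
\end{equation*}
and by a symmetric estimate $\xi/n - t \geq \xi/n - t_{k-1,\eta} \gtrsim_K \abs{\xi}/n^2$, which together yield (b). The main obstacle is the bookkeeping of constants: the threshold $1/(10K)$ in (b) and the implicit constant in (c) must be jointly calibrated so that the lower bound on $\abs{\eta/k - \xi/n}$ dominates the half-widths by the required factor, absorbing all $K$-dependent slack accumulated from $k \approx_K n$ and $\abs{\eta} \approx_K \abs{\xi}$. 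A handful of symmetric sub-cases (the signs of $\eta, \xi$ and the opposite ordering $\eta/k > \xi/n$) are then dispatched by relabeling.
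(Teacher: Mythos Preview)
The paper itself does not prove this lemma; it is quoted verbatim from \cite{BM13}. Your trichotomy strategy---assume $k\neq n$ and (c) fails, then force (b) from a lower bound on $|\eta/k-\xi/n|$---is exactly the natural route and is essentially what one finds there.

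That said, one step is overstated. You assert that by shrinking $c_K$ the constant $M_K$ in $|\eta/k-\xi/n|\geq M_K h$ ``can be made as large as we wish.'' This is false: the dominant piece $|(n-k)\eta/(kn)|$ is, in the critical case $|n-k|=1$, only $|\eta|/(kn)$, which is $\approx_K h$ with a \emph{fixed} $K$-dependent ratio to the half-widths $|\eta|/(2k(k\pm1))$. Shrinking $c_K$ suppresses the correction $(\eta-\xi)/n$ but cannot inflate the leading term. Your final geometric inequality is therefore not the soft ``separation $\gg$ half-width'' statement you suggest; it is a comparison of two quantities of the same order and must be checked explicitly. Concretely, in the worst case $\eta=\xi$, $n=k-1$ (so $\eta/k<\xi/n$), the separation is $|\eta|/(k(k-1))$ while the left half-width of $I_{n,\xi}$ is $|\xi|/(2n(n+1))=|\eta|/(2(k-1)k)$, leaving
\[
t-\tfrac{\eta}{k}\ \geq\ \tfrac{|\eta|}{k(k-1)}-\tfrac{|\eta|}{2k(k-1)}\ =\ \tfrac{|\eta|}{2k(k-1)}\ \geq\ \tfrac{1}{10K}\tfrac{|\eta|}{k^2},
\]
and a symmetric computation gives $\xi/n-t\geq \tfrac{1}{10K}|\xi|/n^2$. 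Once you replace the ``$M_K$ arbitrarily large'' claim with this explicit bookkeeping (the case $|n-k|\geq 2$ being strictly easier, since the separation at least doubles), your argument is complete.
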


The next two fundamental lemmas allow first to compare $\partial_t w/w$ at 
different frequencies and then $J$ at different frequencies in nonlinear terms.  

\begin{lemma}[From \cite{BM13}] \label{lem:WtFreqCompare}
\begin{itemize}
\item[(i)] For $t \geq 1$, and $k,l,\eta,\xi$ such that  $ \max(2\sqrt{\abs{\xi}}, \sqrt{\abs{\eta}}) < t < 2  \min( \abs{\xi},  \abs{\eta})    $,
\begin{align} \label{dtw-xi} 
 \frac{\partial_t w_k(t,\eta)}{w_k(t,\eta)}\frac{w_l(t,\xi)}{\partial_t w_l(t,\xi)}
 \lesssim \jap{\eta - \xi} 
\end{align}
\item[(ii)] For all $t \geq 1$, and $k,l,\eta,\xi$, such that for some $K\geq1$, $\frac{1}{K}\abs{\xi} \leq \abs{\eta} \leq K\abs{\xi}$,  
\begin{align}
\sqrt{\frac{\partial_t w_l(t,\xi)}{w_l(t,\xi)}} \lesssim_K \left[\sqrt{\frac{\partial_t w_k(t,\eta)}{w_k(t,\eta)}} + \frac{\abs{\eta}^{s/2}}{\jap{t}^{s}}\right]\jap{\eta-\xi}. \label{ineq:partialtw_endpt}  
\end{align}
\item[(iii)] For $t \in I_{k,\eta}$ and $t> 2  \sqrt{\abs{\eta}}$, we have the following with $\tau = t - \frac\eta{k}$ and all $l \in \Integer$, 
\begin{align}  \label{dtw}
\frac{\partial_t w_{l}(t,\eta)}{w_{l}(t,\eta)}  
\approx  \frac{1}{1+\abs{\tau}}  \approx 
  \frac{\partial_t w_k(t,\eta)}{w_k(t,\eta)}. 
\end{align} 
\end{itemize}
\end{lemma}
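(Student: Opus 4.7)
My plan is to reduce all three parts to a direct differentiation of the piecewise-explicit formulas \eqref{def:wNR}--\eqref{def:wk} for $w_k(t,\eta)$, after which the result falls out of a case analysis organized by Lemma \ref{lem:wellsep}. Differentiating gives, on each subinterval $I^R_{n,\eta}$ or $I^L_{n,\eta}$, a log-derivative of $w_k$ of the form $\beta_{n,\eta}/(1+\beta_{n,\eta}|t-\eta/n|)$ up to a $C\kappa$ prefactor and a sign, where $\beta_{n,\eta} \in \{a_{n,\eta},b_{n,\eta}\}$. The restriction $t>2\sqrt{|\eta|}$ (which forces the active index $n<\tfrac12\sqrt{|\eta|}$, hence $n^2/\eta < 1/4$) guarantees via \eqref{ak}--\eqref{bk} that $a_{n,\eta}$ and $b_{n,\eta}$ are uniformly comparable to $1$, so
\[
\frac{\partial_t w_k(t,\eta)}{w_k(t,\eta)} \approx \frac{1}{1+|t-\eta/n|}
\]
for constants depending only on $\kappa$. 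The same bound holds whether $k=n$ (so $w_k=w_R$ on $I_{n,\eta}$) or $k\neq n$ (so $w_k=w_{NR}$), because the extra factor $w_R/w_{NR} = (n^2/\eta)(1+\beta_{n,\eta}|t-\eta/n|)$ contributes only another log-derivative of the same shape. Part (iii) then follows immediately: for any $l$ the log-derivative of $w_l(t,\eta)$ is $\approx 1/(1+|\tau|)$, uniformly in $l$.

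For (i), pick $n$ with $t\in I_{n,\eta}$ and $m$ with $t\in \mathbf I_{m,\xi}$; the hypothesis $\max(2\sqrt{|\xi|},\sqrt{|\eta|})<t<2\min(|\xi|,|\eta|)$ guarantees existence of both, with $m$ safely in the resonant window so the lower bounds on $a_{m,\xi},b_{m,\xi}$ apply. The quantity to bound reduces to $(1+|t-\xi/m|)/(1+|t-\eta/n|)\lesssim \jap{\eta-\xi}$, which I split via Lemma \ref{lem:wellsep} together with the alternative $\eta\not\approx \xi$. In the ``same interval'' case $n=m$ with $\eta\approx\xi$, the triangle inequality $|t-\xi/m|\leq |t-\eta/n|+|\eta-\xi|/n$ does it. In the ``far from resonance'' case both $|t-\eta/n|$ and $|t-\xi/m|$ are already $\approx |\eta|/n^2$, hence comparable. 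In the ``well-separated'' case $|\eta-\xi|\gtrsim |\eta|/n$, we have $|t-\xi/m|\lesssim |\xi|/m^2\lesssim |\eta-\xi|/m$, finishing the bound. If $\eta\not\approx\xi$, then $\jap{\eta-\xi}$ already dominates the crude estimate $|t-\xi/m|\lesssim \sqrt{|\xi|}$.

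Part (ii) is the subtlest, since its hypothesis allows $t$ near or outside the active window $[\sqrt{|\eta|},2|\eta|]$ for $\eta$, in which case $\partial_t w_k(t,\eta)/w_k(t,\eta)=0$ and the bound cannot come from (i) alone. In the interior regime where both $(t,\eta)$ and $(t,\xi)$ hit genuinely resonant intervals, the two-sided nature of the $\approx$ estimate above lets us run the case analysis of (i) with the roles of $\eta$ and $\xi$ exchanged, and then taking a square root plus $\sqrt{\jap{\eta-\xi}}\leq \jap{\eta-\xi}$ yields (ii) without the correction term. The correction $|\eta|^{s/2}/\jap{t}^s$ is needed precisely at the boundary transitions: when $t\lesssim \sqrt{|\eta|}$ or $t\gtrsim |\eta|$, one has $\partial_t w_l(t,\xi)/w_l(t,\xi)=O(1)$ at worst (since all of $a_{m,\xi}$, $b_{m,\xi}$, $|t-\xi/m|$ are $O(1)$ there), while $|\eta|^{s/2}/\jap{t}^s \gtrsim 1$ on the same range by direct arithmetic. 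The main obstacle throughout is the bookkeeping at the edges of critical intervals where $n$ and $m$ may differ by one, and Lemma \ref{lem:wellsep} is the essential tool that keeps the case analysis finite and the constants clean.
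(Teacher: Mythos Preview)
The paper does not prove this lemma; it is quoted from \cite{BM13}. Your approach --- differentiating the explicit formulas \eqref{def:wNR}--\eqref{def:wk} and organizing the frequency comparison through Lemma~\ref{lem:wellsep} --- is the natural one and matches what is done in \cite{BM13}. Parts (i) and (iii) are essentially correct. One small slip in (i): the bound $|t-\xi/m|\lesssim\sqrt{|\xi|}$ you invoke when $\eta\not\approx\xi$ fails for small $m$ (the interval half-width is $\approx |\xi|/m^2$, which can be as large as $|\xi|$). The easy fix: one always has $|t-\xi/m|\lesssim|\xi|$, and $\eta\not\approx\xi$ forces $\jap{\eta-\xi}\gtrsim|\xi|$, so the conclusion survives.

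Part (ii) has a genuine gap in the boundary case $t\gtrsim|\eta|$. Your assertion that $|\eta|^{s/2}/\jap{t}^s\gtrsim 1$ there is false: for $t\approx_K|\eta|$ this quantity is $\approx_K|\eta|^{-s/2}$, which is small. The sub-case that actually needs work is $2|\eta|\le t<2|\xi|$ (so $\partial_t w_k(t,\eta)=0$ while the left side of \eqref{ineq:partialtw_endpt} need not vanish). Here the $\jap{\eta-\xi}$ factor, not the correction term alone, closes the estimate. With $t\in I_{m,\xi}$ one has $m\lesssim_K 1$ (since $t\approx|\xi|/m$ and $t\ge 2|\eta|\ge 2|\xi|/K$), and the triangle inequality gives
\[
|\eta-\xi|+m\,|t-\xi/m|\ \ge\ |mt-\eta|\ \ge\ (2m-1)|\eta|\ \ge\ |\eta|.
\]
Hence either $|t-\xi/m|\gtrsim_K|\eta|$, in which case the left side of \eqref{ineq:partialtw_endpt} is $\lesssim_K|\eta|^{-1/2}\le|\eta|^{-s/2}\lesssim_K|\eta|^{s/2}/\jap{t}^s$; or else $|\eta-\xi|\gtrsim_K|\eta|$, in which case the right side is $\gtrsim_K|\eta|^{1-s/2}\ge 1$, dominating the trivial upper bound on the left. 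Either way \eqref{ineq:partialtw_endpt} holds, but not for the reason you gave.
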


\begin{lemma}[From \cite{BM13}] \label{lem:Jswap} 
In general we have 
\begin{align}
\frac{J_k(t,\eta)}{J_l(t,\xi)} \lesssim \frac{\abs{\eta}}{k^2\left( 1+ \abs{t - \frac{\eta}{k}} \right)} e^{9{\mu}\abs{k-l,\eta - \xi}^{1/2}}. \label{ineq:WFreqCompRes}
\end{align}  
If any one of the following holds: ($t \not\in I_{k,\eta}$) or ($k = l$) or ($t \in I_{k,\eta}$, $t \not\in I_{k,\xi}$ and $\frac{1}{K}\abs{\xi} \leq \abs{\eta} \leq K\abs{\xi}$ for some $K \geq 1$) or ($t \in I_{l,\xi}$) then we have the improved estimate  
\begin{align} 
\frac{J_k(t,\eta)}{J_l(t,\xi)} \lesssim e^{10 {\mu}\abs{k-l,\eta - \xi}^{1/2}}. \label{ineq:BasicJswap} 
\end{align} 
Finally if $t \in I_{l,\xi}$, $t \not\in I_{k,\eta}$ and $\frac{1}{K}\abs{\xi} \leq \abs{\eta} \leq K\abs{\xi}$ for some $K \geq 1$ then 
\begin{align} 
\frac{J_k(t,\eta)}{J_l(t,\xi)} \lesssim \frac{l^2\left(1 + \abs{t - \frac{\xi}{l}}\right)}{\abs{\xi}}e^{11{\mu}\abs{k-l,\eta - \xi}^{1/2}}. \label{ineq:WFreqCompNRGain}
\end{align}
\begin{remark} \label{rmk:GainLoss}
If $t \in \mathbf I_{k,\eta} \cap \mathbf I_{k,\xi}, k\neq l$, then
\begin{align} 
\frac{J_k(t,\eta)}{J_l(t,\xi)} \lesssim \frac{\abs{\eta}}{k^2}\sqrt{\frac{\partial_t w_k(t,\eta)}{w_k(t,\eta)}}\sqrt{\frac{\partial_t w_l(t,\xi)}{w_l(t,\xi)}}e^{20\mu\abs{k-l,\eta-\xi}^{1/2}}. \label{ineq:RatJ2partt}
\end{align}
\end{remark} 
\end{lemma}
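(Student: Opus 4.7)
The plan is to reduce the comparison of $J_k(t,\eta)$ and $J_l(t,\xi)$ to a comparison of the underlying weights $w_k(t,\eta)$ and $w_l(t,\xi)$, and then perform a case analysis driven by the explicit definitions \eqref{def:wNR}, \eqref{def:wR}, \eqref{def:wk}. First, I would dispose of the lower-order summand in \eqref{def:J}. Since $J_l(t,\xi) \geq e^{\mu|l|^{1/2}}$ and $|k|^{1/2} \leq |l|^{1/2} + |k-l|^{1/2}$, the contribution of $e^{\mu|k|^{1/2}}$ in $J_k$ is bounded by $e^{\mu|k-l|^{1/2}}$, which is absorbed into every asserted exponential. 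So throughout I need only control $e^{\mu(|\eta|^{1/2}-|\xi|^{1/2})}\, w_l(t,\xi)/w_k(t,\eta)$, and subadditivity $|\eta|^{1/2}\leq|\xi|^{1/2}+|\eta-\xi|^{1/2}$ contributes a harmless $e^{\mu|\eta-\xi|^{1/2}}$, reducing everything to the weight ratio $w_l(t,\xi)/w_k(t,\eta)$.

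Next, I would split according to whether $t$ lies in $I_{k,\eta}$ or $I_{l,\xi}$. The structural facts I rely on are: on any interval $I_{j,\zeta}$, formulas \eqref{def:wR} give $w_R(t,\zeta)\approx (j^2/|\zeta|)(1+|t-\zeta/j|)\,w_{NR}(t,\zeta)$, so $w_R/w_{NR}$ attains its minimum value $j^2/|\zeta|$ at the critical time $t=\zeta/j$; and the product \eqref{def:wNR} defining $w_{NR}$ runs over at most $E(\sqrt{|\zeta|})$ well-separated resonant intervals. For the general bound \eqref{ineq:WFreqCompRes}, the worst case is $t\in I_{k,\eta}$, for which
\[
\frac{1}{w_k(t,\eta)}=\frac{1}{w_R(t,\eta)}\lesssim \frac{|\eta|}{k^2(1+|t-\eta/k|)}\cdot \frac{1}{w_{NR}(t,\eta)},
\]
while $w_l(t,\xi)\leq 1$. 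The exponential factor $e^{9\mu|k-l,\eta-\xi|^{1/2}}$ arises by taking logarithms in \eqref{def:wNR}: only $O(|\eta-\xi|^{1/2})$ factors can differ between $w_{NR}(t,\eta)$ and $1$ without being shared with a matching factor in $w_{NR}(t,\xi)$, and summing the $\log$ of each small factor $(k^2/|\eta|[1+b_{k,\eta}|t-\eta/k|])^{C\kappa}$ gives a total of order $\mu|\eta-\xi|^{1/2}$ (plus a similar $e^{C\mu|k-l|^{1/2}}$ coming from index mismatch).

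For the improved bound \eqref{ineq:BasicJswap}, the polynomial factor $|\eta|/(k^2(1+|t-\eta/k|))$ must disappear. In the three easy subcases ($t\not\in I_{k,\eta}$, or $k=l$, or $t\in I_{l,\xi}$), the resonant amplification either does not occur in the denominator or is shifted to the numerator; each reduces to a pure $w_{NR}/w_{NR}$ comparison. The delicate subcase is $t\in I_{k,\eta}$, $t\not\in I_{k,\xi}$ with $|\eta|\approx|\xi|$: since $|t-\eta/k|\lesssim |\eta|/k^2$ while $t\not\in I_{k,\xi}$ forces $|t-\xi/k|\gtrsim |\xi|/k^2$, the triangle inequality gives $|\eta-\xi|\gtrsim |\eta|/k^2$, hence $|\eta-\xi|^{1/2}\gtrsim \sqrt{|\eta|}/k$; exponentiating with a slightly larger constant than in \eqref{ineq:WFreqCompRes} absorbs the entire polynomial $|\eta|/k^2$ factor. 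The gain estimate \eqref{ineq:WFreqCompNRGain} is the symmetric case: when $t\in I_{l,\xi}$ but $t\not\in I_{k,\eta}$ with $|\eta|\approx|\xi|$, the resonant profile appears in $w_l(t,\xi)$ in the numerator, producing the stated factor $l^2(1+|t-\xi/l|)/|\xi|$.

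Finally, Remark \ref{rmk:GainLoss} follows by combining the general bound \eqref{ineq:WFreqCompRes} with Lemma \ref{lem:WtFreqCompare}(iii), which on $\mathbf I_{k,\eta}$ yields $\partial_t w_k(t,\eta)/w_k(t,\eta)\approx (1+|t-\eta/k|)^{-1}$ and similarly at $(l,\xi)$ (using that $k\neq l$ but both resonant intervals are centered at comparable times so the derivative ratio on $(l,\xi)$ is controlled by that on $(k,\eta)$ up to $\jap{k-l,\eta-\xi}$). Substituting these equivalences rewrites $|\eta|/(k^2(1+|t-\eta/k|))$ as $(|\eta|/k^2)\sqrt{(\partial_t w_k/w_k)(\partial_t w_l/w_l)}$ modulo a factor $e^{C\mu|k-l,\eta-\xi|^{1/2}}$, which is \eqref{ineq:RatJ2partt}. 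The main obstacle is bookkeeping in the case analysis: every polynomial loss from mismatched resonant intervals must be absorbed by a matching lower bound on $|k-l,\eta-\xi|^{1/2}$, and Lemma \ref{lem:wellsep} must be invoked precisely to confirm that all off-diagonal configurations reduce to one of the controllable cases above.
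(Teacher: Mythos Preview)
The paper does not prove this lemma; it is imported from \cite{BM13} without argument (see the sentence ``The proofs can be found in \S3 of \cite{BM13}'' preceding Lemma~\ref{basic}). So there is no in-paper proof to compare against, and your sketch is to be judged on its own.

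Your outline is structurally the right one and matches the strategy of \cite{BM13}: strip off the $e^{\mu|k|^{1/2}}$ summand, reduce to the ratio $w_l(t,\xi)/w_k(t,\eta)$, isolate the resonant amplification $w_{NR}/w_R\approx |\eta|/(k^2(1+|t-\eta/k|))$, and control the residual $w_{NR}(t,\xi)/w_{NR}(t,\eta)$ by counting mismatched factors in \eqref{def:wNR}. The treatment of Remark~\ref{rmk:GainLoss} via Lemma~\ref{lem:WtFreqCompare}(iii) is also correct.

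There is one genuine gap. In the ``delicate subcase'' ($t\in I_{k,\eta}$, $t\notin I_{k,\xi}$, $|\eta|\approx|\xi|$) your triangle-inequality step does not yield the claimed lower bound on $|\eta-\xi|$: the two inputs $|t-\eta/k|\lesssim|\eta|/k^2$ and $|t-\xi/k|\gtrsim|\xi|/k^2$ are of the \emph{same} order, so their difference carries no sign. Concretely, take $k=1$, $\eta=100$, $\xi=99$, $t=199$: then $t\in I_{1,100}\setminus I_{1,99}$ but $|\eta-\xi|=1\ll |\eta|/k^2=100$. What saves you in this example is that $|t-\eta/k|=99$, so the polynomial prefactor is already $\approx 1$. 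This is exactly the dichotomy of Lemma~\ref{lem:wellsep}: since $|\eta|\approx|\xi|$ forces $t\in I_{n,\xi}$ for some $n\neq k$, either (b) holds and $|t-\eta/k|\gtrsim|\eta|/k^2$ kills the prefactor directly, or (c) holds and $|\eta-\xi|\gtrsim|\eta|/|k|$, whence $|\eta|/k^2\le|\eta|/k\lesssim e^{c\mu(|\eta|/k)^{1/2}}\le e^{c\mu|\eta-\xi|^{1/2}}$ absorbs it. You mention Lemma~\ref{lem:wellsep} only in the closing sentence as generic bookkeeping; it is in fact the mechanism that makes this subcase work, and your direct argument should be replaced by it.
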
 

While $A$ does not define an algebra (by design), it does when restricted to the zero frequency in $z$, as is proved in the next lemma from \cite{BM13}.   
\begin{lemma}[Product lemma (from \cite{BM13})] \label{lem:ProdAlg}
For some $c \in (0,1)$, all $\sigma > 1$, all $\beta > -\sigma + 1$ and $\alpha \geq 0$, the following inequalities hold for all $f,g$ which depend only on $v$, 
\begin{subequations} \label{ineq:AProdProps}
\begin{align} 
\norm{\abs{\partial_v}^{\alpha}\jap{\partial_v}^\beta A(fg)}_2 & \lesssim \norm{f}_{\G^{c\lambda,\sigma}} \norm{\abs{\partial_v}^{\alpha}\jap{\partial_v}^\beta Ag}_2 + \norm{g}_{\G^{c\lambda,\sigma}} \norm{\abs{\partial_v}^{\alpha}\jap{\partial_v}^\beta Af}_2 \label{ineq:AProd} \\
\norm{\sqrt{\frac{\partial_tw}{w}}\jap{\partial_v}^\beta A(fg)}_2 & \lesssim \norm{g}_{\G^{c\lambda,\sigma}}\norm{\left(\sqrt{\frac{\partial_tw}{w}} + \frac{\abs{\partial_v}^{s/2}}{\jap{t}^s} \right)\jap{\partial_v}^\beta Af}_2 \nonumber \\ & \quad + \norm{f}_{\G^{c\lambda,\sigma}}\norm{\left(\sqrt{\frac{\partial_tw}{w}} + \frac{\abs{\partial_v}^{s/2}}{\jap{t}^s} \right)\jap{\partial_v}^\beta Ag}_2.   \label{ineq:wtAProd}
\end{align}
\end{subequations}
We also have for $\beta > -\sigma+1$ the algebra property,
\begin{align} 
\norm{\jap{\partial_v}^\beta A(fg)}_2 & \lesssim \norm{\jap{\partial_v}^\beta Af}_2 \norm{\jap{\partial_v}^\beta Ag}_2. \label{ineq:Aalg}
\end{align} 
Moreover, \eqref{ineq:AProdProps} and \eqref{ineq:Aalg} both hold with $A$ replaced by $A^R$. 
\end{lemma}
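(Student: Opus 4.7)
The plan is a paraproduct argument in $v$, which simplifies considerably here because both $f$ and $g$ depend only on $v$: the full 2D Fourier support sits on $k=0$, so all comparisons are of $A_0(t,\eta) = e^{\lambda(t)|\eta|^s}\jap{\eta}^\sigma J_0(t,\eta)$ at different $\eta$'s. Writing $\widehat{fg}(\eta) = (\hat f \ast \hat g)(\eta)$, I split the convolution into three regions: low-high $|\xi| \leq |\eta-\xi|/8$, high-low $|\eta-\xi| \leq |\xi|/8$, and balanced $|\xi|\sim|\eta-\xi|$. In each region one compares the outer weight $A_0(t,\eta)$ to the product of weights at $\xi$ and $\eta-\xi$, and the estimate closes by Young's convolution inequality.

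The crucial input is a frequency comparison of the corrector $J_0$. Since for $k = 0$ the critical interval $I_{0,\eta}$ is empty, we are always in the ``$k=l$'' case of Lemma \ref{lem:Jswap}, which yields
\[
\frac{J_0(t,\eta)}{J_0(t,\xi)} \lesssim e^{10\mu|\eta-\xi|^{1/2}}.
\]
Combined with the subadditivity $|\eta|^s \leq |\xi|^s + |\eta-\xi|^s$ (valid for $s \in (0,1)$ by concavity of $t\mapsto t^s$) and the elementary Sobolev triangle inequality, this gives the paraproduct-compatible bound
\[
A_0(t,\eta) \lesssim e^{\lambda(t)|\xi|^s + 10\mu|\xi|^{1/2}}\jap{\xi}^\sigma\, A_0(t,\eta-\xi) \;+\; (\xi \leftrightarrow \eta-\xi).
\]
Because $s > 1/2$, one can choose $c \in (0,1)$ close enough to $1$ so that $10\mu|\zeta|^{1/2} \leq (1-c)\lambda(t)|\zeta|^s + C$ uniformly in $\zeta$, thereby dominating the Gevrey-$\frac{1}{2}$ loss by the Gevrey-$\frac{1}{s}$ slack. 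In the low-high region the $\xi$-weight is then absorbed into $\norm{f}_{\G^{c\lambda,\sigma}}$; the high-low region is symmetric. In the balanced region one extracts $\jap{\xi}^{-\sigma+1}\jap{\eta-\xi}^{-\sigma+1}$, which is $\ell^1$-summable once $\sigma>1$, and applies Young's inequality to produce the desired product bound. The additional derivatives $|\partial_v|^\alpha\jap{\partial_v}^\beta$ pass through each region since the relevant frequency is comparable to $\eta$ (e.g.\ $|\eta|\approx|\eta-\xi|$ in the low-high region), with the hypothesis $\beta > -\sigma+1$ ensuring that the balanced region still closes.

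The weighted inequality \eqref{ineq:wtAProd} follows by identical decomposition, the only new step being the transfer of $\sqrt{\partial_t w_0(t,\eta)/w_0(t,\eta)}$ from $\eta$ to $\xi$ or $\eta-\xi$ whenever these are comparable. This is exactly the content of Lemma \ref{lem:WtFreqCompare}(ii) applied at $k=l=0$, and it is responsible for the extra correction $|\partial_v|^{s/2}/\jap{t}^s$ on the right-hand side. The algebra bound \eqref{ineq:Aalg} is an immediate corollary of \eqref{ineq:AProd} with $\alpha=0$ together with the trivial inclusion $\norm{\cdot}_{\G^{c\lambda,\sigma}} \lesssim \norm{\jap{\partial_v}^\beta A\, \cdot}_2$, which is valid once $c < 1$ and $\sigma+\beta > 1$. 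The same argument goes through verbatim with $A$ replaced by $A^R$, because $J^R$ is a piecewise modification of $J_0$ (enforcing the resonant weight at every critical time) which still obeys the same $k=l=0$ comparison. The principal obstacle in a careful write-up is purely bookkeeping: tracking the frequency comparisons across the three paraproduct regions and verifying at each step that the Gevrey-$\frac{1}{2}$ defect from the $J$-comparison is swallowed by the slack $\lambda - c\lambda$; once $s>1/2$ is in hand this is mechanical.
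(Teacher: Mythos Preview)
Your proposal is correct and captures the essential argument. Note that the paper does not actually prove this lemma---it is quoted directly from \cite{BM13}---but your sketch (paraproduct in $v$, comparison of $J_0$ via the $k=l$ case of Lemma~\ref{lem:Jswap}, transfer of $\sqrt{\partial_t w/w}$ via Lemma~\ref{lem:WtFreqCompare}(ii), and absorption of the $e^{\mu|\cdot|^{1/2}}$ loss into the Gevrey slack using $s>1/2$ and \eqref{ineq:IncExp}) is precisely the structure of the proof given there.
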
 
\begin{remark} 
Writing $(v^\prime)^2 - 1 = (v^\prime - 1)^2 + 2(v^\prime - 1)$ and $v^{\prime\prime} = \partial_v(v^\prime - 1) + (v^\prime - 1)\partial_v(v^\prime - 1)$, by \eqref{ineq:Boot_ARh} combined with \eqref{ineq:Aalg} we have,
\begin{subequations} \label{ineq:coefbds}
\begin{align}
\norm{A^R\left(1 - (v^\prime)^2\right)}_2 \lesssim \norm{A^R\left(1 - v^\prime\right)}_2 + \norm{A^R\left(1 - v^\prime\right)}_2^2 & \lesssim \epsilon \label{ineq:vp2m1bd} \\ 
\norm{\frac{A^R}{\jap{\partial_v}} v^{\prime\prime}}_2 = \norm{\frac{A^R}{\jap{\partial_v}} \left(v^\prime \partial_v v^\prime \right)}_2 \lesssim \norm{A^R\left(1 - v^\prime\right)}_2 +  \norm{A^R\left(1 - v^\prime\right)}_2^2 & \lesssim \epsilon. 
\end{align}
\end{subequations}
\end{remark}

\subsection{Enhanced dissipation multiplier \texorpdfstring{$A^\nu$}{A nu}} \label{sec:Anu}
In this section we focus on the relevant properties of the enhanced dissipation multiplier and the associated semi-norm. 
The following lemma summarizes the properties of $D(t,\eta)$ (defined in \eqref{def:D}).  
The point is that $D(t,\eta)$ essentially trades regularity for time-decay when $t \geq 2\abs{\eta}$ but we need to ensure that it does so in a way that will define a norm suitable for use in nonlinear estimates. 

\begin{lemma}[Properties of $D(t,\eta)$] \label{lem:propD}
Uniformly in $\eta,\xi,t$ and $\nu$ we have:
\begin{itemize} 
\item[(a)] the lower bound
\begin{align} 
\max\left( \nu \abs{\eta}^3, \nu t^3\right) \lesssim \alpha D(t,\eta); \label{ineq:DLowB}
\end{align}
\item[(b)] the ratio estimate: 
\begin{align}   
\frac{\jap{D(t,\eta)}}{\jap{D(t,\xi)}} \lesssim \jap{\eta-\xi}^3; \label{ineq:DRat}
\end{align}
\item[(c)] the difference estimate: if there is some $K \geq 1$, such that $\frac{1}{K}\abs{\xi} \leq \abs{\eta} \leq K\abs{\xi}$, then
\begin{align} 
\abs{\jap{D(t,\eta)}^\alpha - \jap{D(t,\xi)}^\alpha} \lesssim_{\alpha,K}  \frac{\jap{D(t,\xi)}^{\alpha}}{\jap{\xi}}\jap{\eta-\xi}^{3\alpha}. \label{ineq:Dcomm}
\end{align}
\end{itemize}
\end{lemma}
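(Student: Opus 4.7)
All three estimates are elementary consequences of the explicit structure of $D(t,\eta)$. My plan is first to extract the equivalent form
\[
\alpha D(t,\eta) = \max\left( \frac{\nu \abs{\eta}^3}{3},\ \frac{\nu t^3}{24}\right),
\]
which I would verify by noting that the two summands in the definition of $D$ are arranged so that the second exactly cancels the first when $t\geq 2\abs{\eta}$ -- a routine case split on whether $t \leq 2\abs{\eta}$ or $t > 2\abs{\eta}$. This identity immediately proves (a) and also yields the convenient two-sided bound $\jap{D(t,\eta)} \approx 1 + \nu\abs{\eta}^3/\alpha + \nu t^3/\alpha$.

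For (b), I would plug this equivalence into the ratio and use the elementary cubic inequality $\abs{\eta}^3 \lesssim \abs{\xi}^3 + \abs{\eta-\xi}^3$ together with $\nu \leq 1$ and $\alpha \geq 1$ to obtain $\nu\abs{\eta}^3/\alpha \lesssim (\nu\abs{\xi}^3/\alpha)\jap{\eta-\xi}^3 + \jap{\eta-\xi}^3$; since the $\nu t^3/\alpha$ contribution is common to numerator and denominator, dividing through gives \eqref{ineq:DRat}.

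For (c), the plan is to reduce to a linear difference via the mean value theorem for $x\mapsto x^\alpha$ (valid because $\alpha \geq 1$), which yields
\[
\abs{\jap{D(t,\eta)}^\alpha - \jap{D(t,\xi)}^\alpha} \lesssim_\alpha \max\bigl(\jap{D(t,\eta)},\jap{D(t,\xi)}\bigr)^{\alpha-1} \abs{D(t,\eta)-D(t,\xi)}.
\]
I would bound the maximum by $\jap{D(t,\xi)}\jap{\eta-\xi}^3$ using (b), producing the factor $\jap{\eta-\xi}^{3(\alpha-1)}$ that we need. To control the remaining linear difference, I would observe that $\abs{\eta}\mapsto D(t,\eta)$ is piecewise-$C^1$ with slope $\nu\abs{\eta}^2/\alpha$ on $\set{\abs{\eta}\geq t/2}$ and identically $0$ on $\set{\abs{\eta}\leq t/2}$, so under the comparability $\abs{\xi}/K \leq \abs{\eta} \leq K\abs{\xi}$ one gets $\abs{D(t,\eta)-D(t,\xi)} \lesssim_K (\nu\abs{\xi}^2/\alpha)\abs{\eta-\xi}$. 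It then remains to verify $\nu\abs{\xi}^2/\alpha \lesssim \jap{D(t,\xi)}/\jap{\xi}$, which is immediate when $t \leq 2\abs{\xi}$ (since $\alpha D(t,\xi) = \nu\abs{\xi}^3/3$ in that regime); in the complementary regime I would use $\abs{\xi}^2 \leq t^2/4$ together with $\jap{\xi} \lesssim t$ to convert one power of $t$ into $1/\jap{\xi}$, and the low-frequency case $\abs{\xi}\lesssim 1$ is absorbed in the $K,\alpha$-dependent constant.

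The only mildly delicate step is this last inequality $\nu\abs{\xi}^2/\alpha \lesssim \jap{D(t,\xi)}/\jap{\xi}$ in the long-time regime $t \geq 2\abs{\xi}$, which I expect to be the main obstacle; it hinges on simultaneously exploiting $\nu t^3 \approx \alpha D(t,\xi)$ and $\jap{\xi} \lesssim t$ to produce the required $1/\jap{\xi}$ factor.
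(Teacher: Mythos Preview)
Your proposal is correct. The key organizing device you introduce---the identity $\alpha D(t,\eta) = \max\bigl(\nu\abs{\eta}^3/3,\ \nu t^3/24\bigr)$, and the resulting two-sided equivalence $\jap{D(t,\eta)} \approx 1 + \nu\abs{\eta}^3/\alpha + \nu t^3/\alpha$---is not made explicit in the paper, and it streamlines all three parts. For (b), the paper proceeds by a direct case split on the positions of $t$ relative to $2\abs{\eta}$ and $2\abs{\xi}$, handling each regime separately; your approach replaces that with a one-line application of $\abs{\eta}^3 \lesssim \abs{\xi}^3 + \abs{\eta-\xi}^3$ to the equivalent form, which is cleaner and makes the role of the common $\nu t^3$ term transparent. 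For (c), both arguments start with the mean value theorem and invoke (b) to control the $(\alpha-1)$-power prefactor; the paper then bounds $\abs{\jap{D(t,\eta)} - \jap{D(t,\xi)}}$ by a three-way case analysis (on $t$ versus $2\max(\abs{\eta},\abs{\xi})$ and $2\min(\abs{\eta},\abs{\xi})$), whereas you read off the Lipschitz constant of $r\mapsto D(t,r)$ directly from the piecewise structure. The two routes are equivalent in content, but yours is more economical; in fact your bound yields the slightly sharper exponent $\jap{\eta-\xi}^{3\alpha-2}$ in \eqref{ineq:Dcomm}. The low-frequency step $\nu\abs{\xi}^2/\alpha \lesssim \jap{D(t,\xi)}/\jap{\xi}$ in the regime $t \geq 2\abs{\xi}$, which you flag as the delicate point, is indeed handled exactly as you describe and presents no obstacle.
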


\begin{proof} 
First, \eqref{ineq:DLowB} is immediate from \eqref{def:D}.

For \eqref{ineq:DRat}, write
\begin{align*}
\frac{\jap{D(t,\eta)}}{\jap{D(t,\xi)}} & = \frac{\jap{\frac{1}{3\alpha}\nu \abs{\eta}^3 + \frac{1}{24\alpha} \nu\left(t^3 - 8\abs{\eta}^3\right)_+}}{ \jap{\frac{1}{3\alpha}\nu \abs{\xi}^3 + \frac{1}{24 \alpha} \nu\left(t^3 - 8\abs{\xi}^3\right)_+}} \\ 
& \lesssim \jap{\eta-\xi}^3 + \frac{\jap{\frac{1}{24\alpha} \nu\left(t^3 - 8\abs{\eta}^3\right)_+}}{ \jap{\frac{1}{3\alpha}\nu \abs{\xi}^3 + \frac{1}{24\alpha} \nu\left(t^3 - 8\abs{\xi}^3\right)_+}}. 
\end{align*} 
If $t \leq 2\abs{\eta}$ then we have shown \eqref{ineq:DRat}, so assume otherwise. Then, 
 \begin{align*} 
\frac{\jap{D(t,\eta)}}{\jap{D(t,\xi)}} & \lesssim \jap{\eta-\xi}^3 + \frac{\jap{\frac{1}{24\alpha} \nu\left(t^3 - 8\abs{\xi}^3 + 8\abs{\xi}^3 - 8\abs{\eta}^3\right)}}{ \jap{\frac{1}{3\alpha} \nu \abs{\xi}^3 + \frac{1}{24\alpha} \nu\left(t^3 - 8\abs{\xi}^3\right)_+}} \\ 
& \lesssim \jap{\eta-\xi}^3 + \frac{\jap{\frac{1}{24\alpha} \nu\left(t^3 - 8\abs{\xi}^3\right)}}{ \jap{\frac{1}{3\alpha} \nu \abs{\xi}^3 + \frac{1}{24\alpha} \nu\left(t^3 - 8\abs{\xi}^3\right)_+}}.   
\end{align*} 
If $t \geq 2\abs{\xi}$ then \eqref{ineq:DRat} follows, so assume otherwise. Hence, the only case left is $2\abs{\eta} \leq t \leq 2\abs{\xi}$. However in this case, we get that \eqref{ineq:DRat} follows by $\abs{t-2\abs{\xi}} \leq 2\abs{\abs{\eta} - \abs{\xi}} \leq 2\abs{\eta-\xi}$. This covers all cases. 

Finally turn to \eqref{ineq:Dcomm}. First, from the mean value theorem and \eqref{ineq:DRat}, 
\begin{align*} 
\abs{\jap{D(t,\eta)}^\alpha - \jap{D(t,\xi)}^\alpha} & = \abs{\jap{\frac{1}{3\alpha} \nu \abs{\eta}^3 + \frac{1}{24\alpha} \nu\left(t^3 - 8\abs{\eta}^3\right)_+}^{\alpha} - \jap{\frac{1}{3\alpha} \nu \abs{\xi}^3 + \frac{1}{24\alpha} \nu\left(t^3 - 8\abs{\xi}^3\right)_+}^{\alpha}} \\ 
& \hspace{-4cm} \lesssim \alpha \sup_{\xi^\star \in [\eta,\xi]}\jap{D(t,\xi^\star)}^{\alpha - 1} \abs{\jap{\frac{1}{3\alpha} \nu \abs{\eta}^3 + \frac{1}{24\alpha} \nu\left(t^3 - 8\abs{\eta}^3\right)_+} - \jap{\frac{1}{3 \alpha} \nu \abs{\xi}^3 + \frac{1}{24 \alpha} \nu\left(t^3 - 8\abs{\xi}^3\right)_+}} \\ 
 & \hspace{-4cm} \lesssim \alpha \jap{\eta-\xi}^{3\alpha-3} \jap{D(t,\xi)}^{\alpha-1} \abs{\jap{\frac{1}{3\alpha} \nu \abs{\eta}^3 + \frac{1}{24\alpha} \nu\left(t^3 - 8\abs{\eta}^3\right)_+} - \jap{\frac{1}{3 \alpha} \nu \abs{\xi}^3 + \frac{1}{24 \alpha} \nu\left(t^3 - 8\abs{\xi}^3\right)_+}}.
\end{align*} 
If $t \geq 2\max(\abs{\eta},\abs{\xi})$ then we just have
\begin{align*} 
\abs{\jap{\frac{1}{3 \alpha} \nu \abs{\eta}^3 + \frac{1}{24 \alpha} \nu\left(t^3 - 8\abs{\eta}^3\right)_+} - \jap{\frac{1}{3 \alpha} \nu \abs{\xi}^3 + \frac{1}{24\alpha} \nu\left(t^3 - 8\abs{\xi}^3\right)_+}} = 0.
\end{align*}
If $t \leq 2\min(\abs{\eta},\abs{\xi})$ then the factor reduces to 
\begin{align*} 
\abs{\jap{\frac{1}{3 \alpha} \nu \abs{\eta}^3 + \frac{1}{24\alpha} \nu\left(t^3 - 8\abs{\eta}^3\right)_+} - \jap{\frac{1}{3\alpha} \nu \abs{\xi}^3 + \frac{1}{24\alpha} \nu\left(t^3 - 8\abs{\xi}^3\right)_+}} = \abs{\jap{\frac{1}{3 \alpha} \nu \abs{\eta}^3} - \jap{\frac{1}{3 \alpha} \nu \abs{\xi}^3}}.
\end{align*} 
This is essentially a statement about Sobolev regularity, and it follows from \eqref{ineq:DLowB} that
\begin{align*}
\abs{\jap{\frac{1}{3\alpha} \nu \abs{\eta}^3} - \jap{\frac{1}{3 \alpha} \nu \abs{\xi}^3}}&  \lesssim \left(\jap{\nu \abs{\eta}^2} + \jap{\nu \abs{\xi}^2}\right) \abs{\eta-\xi} \\ 
& \lesssim  \frac{\jap{D(t,\xi)}}{\jap{\xi}} \jap{\eta-\xi}^2 \abs{\eta-\xi}.
\end{align*}
If $2\abs{\xi} < t < 2\abs{\eta}$ then by \eqref{ineq:DLowB} and \eqref{ineq:DRat} we have
\begin{align*} 
\abs{\jap{\frac{1}{3\alpha} \nu \abs{\eta}^3 + \frac{1}{24 \alpha} \nu\left(t^3 - 8\abs{\eta}^3\right)_+} - \jap{\frac{1}{3 \alpha} \nu \abs{\xi}^3 + \frac{1}{24\alpha} \nu\left(t^3 - 8\abs{\xi}^3\right)_+}} & \\ 
& \hspace{-4cm} = \abs{\jap{\frac{1}{3\alpha} \nu \abs{\eta}^3} - \jap{\frac{1}{3\alpha} \nu t^3}} \\
& \hspace{-4cm}  \lesssim \frac{1}{\alpha}\left( \jap{\nu \abs{\eta}^2} + \jap{\nu t^2}\right) \abs{\eta - t} \\ 
& \hspace{-4cm}  \lesssim \frac{\jap{D(t,\xi)}}{\jap{\xi}} \abs{\eta - \xi}.
\end{align*}
Since \eqref{ineq:DRat} is symmetric in $\xi$ and $\eta$, this completes the proof of \eqref{ineq:DRat}.  
\end{proof}

From Lemma \ref{lem:propD}, and the choice of $\beta$ and $\sigma$, we may deduce the following important product inequality about $A^\nu$. 
The lemma allows us to deduce the correct time decay from products of functions, only one of which might be decaying. 
The important detail to notice is the loss of $3\alpha$ derivatives on the first factor in \eqref{ineq:Ddistri}.
The latter inequality, \eqref{ineq:DdistriDecay}, is not directly used in the proof of Theorem \ref{thm:Main} but regardless, the fact that it holds is important to consider when examining the main difficulties in the proof of \eqref{ineq:energyLo} in \S\ref{sec:ED}.

\begin{lemma}[$A^\nu$ Product Lemma] \label{lem:AnuProd} 
The following holds for all $q^1$ and $q^2$ such that $P_{\neq 0} q^2 = q^2$, 
\begin{align} 
\norm{A^\nu(q^1 q^2)}_2 & \lesssim \norm{q^1}_{\G^{\lambda,\beta + 3\alpha}}\norm{A^\nu q^2}_2 \label{ineq:Ddistri}  
\end{align}
If in addition we have $P_{\neq 0} q^1 = q^1$ then it follows that 
\begin{align} 
\norm{A^\nu(q^1 q^2)}_2 \lesssim \frac{1}{\jap{\nu t^3}^\alpha}\norm{A^\nu q^1}_{2}\norm{A^\nu q^2}_2. \label{ineq:DdistriDecay}
\end{align}
\end{lemma}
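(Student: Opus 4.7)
The plan is to work on the Fourier side and prove a pointwise bound on $A^\nu(k,\eta)$ that splits into a convolution-summable kernel times $A^\nu$ evaluated at a shifted frequency, then close with Young's inequality. The three tools are: (i) the subadditivity $|k,\eta|^s \leq |k-l,\eta-\xi|^s + |l,\xi|^s$ (valid since $s\leq 1$); (ii) the ratio estimate \eqref{ineq:DRat}, which lets us swap $\langle D(t,\eta)\rangle^\alpha$ for $\langle D(t,\xi)\rangle^\alpha$ at the cost of $\langle \eta-\xi\rangle^{3\alpha}$; and (iii) the fact that $P_{\neq 0} q^2 = q^2$ forces the convolution variable $l\neq 0$, so the indicator $\mathbf{1}_{l\neq 0}$ in $A^\nu(l,\xi)$ is automatic.

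For \eqref{ineq:Ddistri} I would write
\begin{align*}
|\widehat{A^\nu(q^1 q^2)}(k,\eta)| \le \sum_{l}\int A^\nu(k,\eta)\,|\hat q^1_{k-l}(\eta-\xi)|\,|\hat q^2_l(\xi)|\,d\xi,
\end{align*}
distribute $\langle k,\eta\rangle^\beta \lesssim \langle k-l,\eta-\xi\rangle^\beta + \langle l,\xi\rangle^\beta$ in the worse of the two ways, apply (i) and (ii), and absorb the resulting $\langle\eta-\xi\rangle^{3\alpha}$ into $\langle k-l,\eta-\xi\rangle^{3\alpha}$ to arrive at the kernel bound
\begin{align*}
A^\nu(k,\eta) \lesssim e^{\lambda|k-l,\eta-\xi|^s}\langle k-l,\eta-\xi\rangle^{\beta+3\alpha}\,A^\nu(l,\xi).
\end{align*}
Since the paper fixes $\beta>3\alpha+2$, Cauchy–Schwarz shows $\mathcal{G}^{\lambda,\beta+3\alpha}$ embeds into the weighted $\ell^1_k L^1_\eta$ space on the Fourier side (with an arbitrarily small loss in $\lambda$ absorbed by the exponential). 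Young's inequality $\ell^2 L^2 * \ell^1 L^1 \hookrightarrow \ell^2 L^2$ then yields \eqref{ineq:Ddistri}.

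For \eqref{ineq:DdistriDecay}, the additional hypothesis $P_{\neq 0} q^1 = q^1$ gives $k-l\neq 0$ in the convolution and so permits placing $A^\nu$ on either factor. I would split by frequency comparison: when $|l,\xi| \ge |k-l,\eta-\xi|$, send the full Sobolev $\langle k,\eta\rangle^\beta$ to $q^2$, leaving only Gevrey plus $3\alpha$ Sobolev on $q^1$; when $|k-l,\eta-\xi| \ge |l,\xi|$, do the symmetric thing. In the low–high case the residual $q^1$ weight is dominated by $\|q^1\|_{\mathcal{G}^{\lambda,\beta}}$ (again using $\beta > 3\alpha + 2$), and then \eqref{ineq:AnuDecay} upgrades this to $\jap{\nu t^3}^{-\alpha}\|A^\nu q^1\|_2$; the high–low case is handled symmetrically.

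The main obstacle is verifying that the $D$-ratio estimate \eqref{ineq:DRat} is sharp enough in the transition regime $2|\xi|\le t\le 2|\eta|$ (and its reflection), where $D(t,\eta)$ and $D(t,\xi)$ are qualitatively different — this is exactly the place where the $3\alpha$ loss in the Sobolev gap is essential and where the structure of $D(t,\eta)=\frac{1}{3\alpha}\nu|\eta|^3+\frac{1}{24\alpha}\nu(t^3-8|\eta|^3)_+$ is used. The supplementary observation for \eqref{ineq:DdistriDecay} is purely algebraic: the lower bound \eqref{ineq:DLowB} gives $\langle D(t,\eta)\rangle^\alpha \gtrsim \jap{\nu t^3}^\alpha$ uniformly in frequency, so each embedding of a Gevrey factor into an $A^\nu$ factor contributes precisely one $\jap{\nu t^3}^{-\alpha}$, and we cash in exactly one of these (from whichever factor is not already weighted by $A^\nu$) to produce the desired decay.
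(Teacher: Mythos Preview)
Your pointwise kernel bound
\[
A^\nu(k,\eta)\ \lesssim\ e^{\lambda|k-l,\eta-\xi|^s}\langle k-l,\eta-\xi\rangle^{\beta+3\alpha}\,A^\nu(l,\xi)
\]
is correct, and the overall strategy for \eqref{ineq:DdistriDecay} matches the paper's. The problem is the closing step for \eqref{ineq:Ddistri}. To run Young's inequality you need the $q^1$-weighted kernel $e^{\lambda|\cdot|^s}\langle\cdot\rangle^{\beta+3\alpha}|\widehat{q^1}|$ in $\ell^1_k L^1_\eta$, and by Cauchy--Schwarz this costs an extra $\langle\cdot\rangle^{\sigma_0}$ with $\sigma_0>1$, i.e.\ it yields $\|q^1\|_{\G^{\lambda,\beta+3\alpha+\sigma_0}}$ rather than $\|q^1\|_{\G^{\lambda,\beta+3\alpha}}$. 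Your claim that this is absorbed by ``an arbitrarily small loss in $\lambda$'' does not work: the crude subadditivity $|k,\eta|^s\le |k-l,\eta-\xi|^s+|l,\xi|^s$ is \emph{sharp} (equality when one argument vanishes), so it places the full $e^{\lambda|\cdot|^s}$ on $q^1$ with no slack in the Gevrey index to trade for Sobolev regularity. The hypothesis $\beta>3\alpha+2$ is irrelevant here; the issue is the missing room in $\lambda$, not the size of $\beta$.

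The paper resolves this with a paraproduct decomposition $T_{LH}+T_{HL}+T_{\R}$: in each piece one factor is at low frequency relative to the other, so \eqref{lem:scon} (or \eqref{lem:strivial} in the remainder) gives $|k,\eta|^s\le |\text{high}|^s + c\,|\text{low}|^s$ with a genuine $c<1$. That $c<1$ on the low-frequency side is exactly the Gevrey slack that lets \eqref{ineq:L2L1} close at the stated index $\beta+3\alpha$. Your direct convolution argument would prove the lemma with $\beta+3\alpha$ replaced by any index strictly larger than $\beta+3\alpha+1$, which would in fact suffice for every application in the paper (since $\sigma>\beta+3\alpha+8$ throughout), but it does not prove the lemma as stated.
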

\begin{proof}
First we prove \eqref{ineq:Ddistri}. 
Expand via an inhomogeneous paraproduct
\begin{align*} 
\widehat{A^\nu(q^1q^2)}(k,\eta) & = \frac{1}{2\pi} \sum_{N \geq 8} \sum_{l \in \Integer} \int_{\xi} A_k^\nu(\eta) \widehat{q^1}_{l}(\xi)_{<N/8} \widehat{q^2}_{k-l}(\eta-\xi)_{N} d\xi \\ 
& \quad + \frac{1}{2\pi}\sum_{N \geq 8} \sum_{l \in \Integer} \int_{\xi} A_k^\nu(\eta) \widehat{q^1}_{l}(\xi)_{N} \widehat{q^2}_{k-l}(\eta-\xi)_{<N/8} d\xi \\ 
 & \quad + \frac{1}{2\pi}\sum_{N \in \mathbb D} \sum_{N/8 \leq N^\prime \leq 8N} \sum_{l \in \Integer} \int_{\xi} A_k^\nu(\eta) \widehat{q^1}_{l}(\xi)_{N^\prime} \widehat{q^2}_{k-l}(\eta-\xi)_{N} d\xi \\ 
& = T_{LH} + T_{HL} + T_{\R}.
\end{align*} 

Consider first $T_{LH}$. 
Since on the support of the integrand (see \S\ref{Apx:LPProduct}),  
\begin{subequations} \label{ineq:TLHFreqCon}
\begin{align} 
\abs{\abs{k,\eta} - \abs{k-l,\eta-\xi}} & \leq \abs{l,\xi} \leq \frac{3}{16}\abs{k-l,\eta-\xi}, \\
\frac{13}{16}\abs{k-l,\eta-\xi} &\leq \abs{k,\eta} \leq \frac{19}{16}\abs{k-l,\eta-\xi},  
\end{align} 
\end{subequations}
inequalities \eqref{lem:scon} and \eqref{ineq:DRat} imply that for some $c \in (0,1)$ (depending only on $s$ and our Littlewood-Paley conventions), 
\begin{align*} 
\abs{T_{LH}} & \lesssim \sum_{l \in \Integer: l \neq k} \sum_{N \geq 8} \int_\xi \jap{l,\xi}^{3\alpha} e^{c\lambda\abs{l,\xi}^s} \abs{\widehat{q^1}_l(\xi)_{<N/8}}  \jap{k-l,\eta-\xi}^\beta e^{\lambda\abs{k-l,\eta-\xi}^s}\jap{D(\eta-\xi)}^\alpha \abs{ \widehat{q^2}_{k-l}(\eta-\xi)_{N} } d\xi. 
\end{align*} 
Hence, by \eqref{ineq:L2L1}, $\beta > 1$ and the almost orthogonality \eqref{ineq:GeneralOrtho}, we have
\begin{align} 
\norm{T_{LH}}^2_2 & \lesssim \sum_{N \geq 8} \norm{q^1_{<N/8}}_{\G^{c\lambda,3\alpha+\beta}}^2 \norm{A^\nu q^2_{N}}_2^2 \nonumber \\ 
& \lesssim \norm{q^1}^2_{\G^{\lambda,3\alpha+\beta}} \norm{A^\nu q^2}^2_2.  \label{ineq:TLH}
\end{align}

Turn next to $T_{HL}$. On the support of the integrand we have, 
\begin{subequations} \label{ineq:THLFreqCon}
\begin{align} 
\abs{\abs{k,\eta} - \abs{l,\xi}} & \leq \abs{k-l,\eta-\xi} \leq \frac{3}{16}\abs{l,\xi}, \\
\frac{13}{16}\abs{l,\xi} &\leq \abs{k,\eta} \leq \frac{19}{16}\abs{l,\xi},  
\end{align} 
\end{subequations}
which implies that, by \eqref{lem:scon} and \eqref{ineq:DRat}, for some $c \in (0,1)$, there holds
\begin{align*} 
\abs{T_{HL}} 
& \lesssim \sum_{l \in \Integer} \sum_{N \geq 8} \int_\xi \abs{\jap{l,\xi}^{\beta + 3\alpha} e^{\lambda\abs{l,\xi}^s} \widehat{q^1}_l(\xi)_{N} \jap{D(\eta-\xi)}^\alpha \widehat{q^2}_{k-l}(\eta-\xi)_{<N/8} e^{c\lambda\abs{k-l,\eta-\xi}^s}} d\xi. 
\end{align*} 
Therefore, by \eqref{ineq:GeneralOrtho}, \eqref{ineq:L2L1} and $\sigma \geq \beta + 3\alpha$, $\beta > 1$, 
\begin{align} 
\norm{T_{HL}}^2_2 & \lesssim \sum_{N \geq 8}\norm{q^1_{N}}_{\G^{\lambda,3\alpha+\beta}}^2 \norm{\jap{D(t,\partial_v)}^\alpha q^2_{<N/8}}_{\G^{c\lambda,\beta}}^2 \nonumber \\ 
& \lesssim \norm{q^1}^2_{\G^{\lambda,3\alpha+\beta}} \norm{A^\nu q^2}^2_2.  \label{ineq:THL}
\end{align} 
This completes the treatment of $T_{HL}$. 

Finally turn to the remainder term $T_{\R}$. 
On the support of the integrand, there holds $\abs{l,\xi} \approx \abs{k-l,\eta-\xi}$ 
and therefore by \eqref{lem:strivial} and \eqref{ineq:DRat}, there is some $c \in (0,1)$ such that
\begin{align*}  
\abs{T_{\R}} & \lesssim \sum_{N \in \mathbb D} \sum_{N \approx N^\prime} \sum_{l \in \Integer} \int_{\xi} \jap{k,\eta}^\beta \jap{\xi}^{3\alpha} e^{c \abs{l,\xi}^s} \abs{\widehat{q^1}_{l}(\xi)_{N^\prime} e^{c \abs{k-l,\eta-\xi}^s} \jap{D(\eta-\xi)}^\alpha \widehat{q^2}_{k-l}(\eta-\xi)_{N}} d\xi \\ 
& \lesssim \sum_{N \in \mathbb D} \sum_{N \approx N^\prime} \sum_{l \in \Integer} \int_{\xi} N^{-1} \jap{k-l,\eta-\xi} \jap{l,\xi}^{3\alpha + \beta} e^{c \abs{l,\xi}^s} \abs{\widehat{q^1}_{l}(\xi)_{N^\prime} e^{c \abs{k-l,\eta-\xi}^s} \jap{D(\eta-\xi)}^\alpha \widehat{q^2}_{k-l}(\eta-\xi)_{N}} d\xi
\end{align*}
Therefore, by \eqref{ineq:L2L1} ($\beta > 2$),  
\begin{align} 
\norm{T_{\R}}_2 & \lesssim \sum_{N \in \mathbb D} N^{-1} \norm{q^1}_{\G^{c \lambda,3\alpha + \beta}} \norm{A^\nu q^2}_2 \nonumber \\ 
& \lesssim \norm{q^1}_{\G^{\lambda,3\alpha + \beta}} \norm{A^\nu q^2}_2.\label{ineq:TR}
\end{align}    
Upon taking square roots in \eqref{ineq:TLH} and \eqref{ineq:THL} and combining with \eqref{ineq:TR}, this completes the proof of \eqref{ineq:Ddistri}.

The proof of \eqref{ineq:DdistriDecay} is a slight variant of the proof of \eqref{ineq:Ddistri} except now the multiplier $A^\nu$ is always passed to the `high frequency' factor in the paraproduct and \eqref{ineq:AnuDecay} is used on the `low frequency' factor to introduce the additional decay. 
As \eqref{ineq:DdistriDecay} is not actually used in the proof of Theorem \ref{thm:Main}, we omit the details for brevity. 
\end{proof} 

\section{Elliptic estimates} \label{sec:Elliptic}

The following easy, but fundamental, lemma from \cite{BM13} shows that by paying regularity, one can still deduce
the same decay from $\Delta_t^{-1}$ as from $\Delta_L^{-1}$. 
The loss of three derivatives comes from the presence of $v^{\prime\prime} = v^\prime \partial_v v^\prime$ 
in the coefficients of $\Delta_t$. 
\begin{lemma}[Lossy elliptic estimate (from \cite{BM13})] \label{lem:LossyElliptic}
Under the bootstrap hypotheses, for $\epsilon$ sufficiently small, 
\begin{align} 
\norm{P_{\neq 0}\phi(t)}_{\G^{\lambda(t), \sigma - 3}} \lesssim \frac{\norm{f(t)}_{\G^{\lambda(t),\sigma-1}}}{\jap{t}^2}. \label{ineq:lossyelliptic}
\end{align} 
\end{lemma}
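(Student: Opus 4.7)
The plan is to treat $\Delta_t$ as a perturbation of the flat Couette Laplacian $\Delta_L$ and exploit the Orr-mechanism decay encoded in $\Delta_L^{-1}$ on $P_{\neq 0}$, using the smallness and regularity of $(v')^2-1$ and $v''$ provided by the bootstrap hypotheses \eqref{ineq:Boot_ARh}, \eqref{ineq:coefbds}.

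First I would establish the underlying symbol bound for $\Delta_L^{-1}$. A straightforward case analysis (splitting on whether $|\eta|\geq |kt|/2$ or not) shows that for $k\neq 0$,
\begin{align*}
\bigl(k^2+(\eta-kt)^2\bigr)\jap{k,\eta}^2 \gtrsim \jap{t}^2 ,
\end{align*}
which, after inserting the Gevrey-Sobolev weight $e^{\lambda|k,\eta|^s}\jap{k,\eta}^{\sigma-3}$, yields the Fourier-multiplier estimate
\begin{align*}
\norm{\Delta_L^{-1} P_{\neq 0} g}_{\G^{\lambda,\beta-2}} \lesssim \frac{1}{\jap{t}^2}\norm{g}_{\G^{\lambda,\beta}}.
\end{align*}
This is the quantitative incarnation of inviscid damping, trading two derivatives for a factor $\jap{t}^{-2}$.

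Next, since the coefficients $(v')^2-1$ and $v''$ depend only on $v$, the operator $\Delta_t$ preserves $P_{\neq 0}$. From $\Delta_t\phi=f$ I would write
\begin{align*}
\Delta_L P_{\neq 0}\phi = P_{\neq 0}f - \bigl((v')^2-1\bigr)(\partial_v-t\partial_z)^2 P_{\neq 0}\phi - v''(\partial_v-t\partial_z)P_{\neq 0}\phi ,
\end{align*}
apply $\Delta_L^{-1}$ and invoke the above symbol bound to obtain
\begin{align*}
\norm{P_{\neq 0}\phi}_{\G^{\lambda,\sigma-3}} \lesssim \frac{1}{\jap{t}^2}\Bigl[\norm{f}_{\G^{\lambda,\sigma-1}} + \norm{\bigl((v')^2-1\bigr)(\partial_v-t\partial_z)^2 P_{\neq 0}\phi}_{\G^{\lambda,\sigma-1}} + \norm{v''(\partial_v-t\partial_z)P_{\neq 0}\phi}_{\G^{\lambda,\sigma-1}}\Bigr].
\end{align*}
The goal is to absorb the last two terms into the left-hand side using the smallness of the coefficients, for $\epsilon$ sufficiently small.

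The hard part is the second perturbation term, since naively $\norm{(\partial_v-t\partial_z)^2 P_{\neq 0}\phi}_{\G^{\lambda,\sigma-1}}$ grows like $\jap{t}^2\norm{P_{\neq 0}\phi}_{\G^{\lambda,\sigma+1}}$, losing both time decay and four derivatives. The key trick I would use is the identity $(\partial_v-t\partial_z)^2 = \Delta_L - \partial_{zz}$ on $P_{\neq 0}$. Splitting $(\partial_v-t\partial_z)^2\Delta_L^{-1} = I - \partial_{zz}\Delta_L^{-1}$ shows that this composition has \emph{bounded} symbol on $P_{\neq 0}$. Combined with a paraproduct decomposition of the product $((v')^2-1)\cdot(\partial_v-t\partial_z)^2 P_{\neq 0}\phi$: in the low-coefficient/high-$\phi$ piece, $(\partial_v-t\partial_z)^2\Delta_L^{-1}$ acts as a bounded multiplier and one commutes the slowly-varying coefficient out, so the term reduces to $\norm{(v')^2-1}_{\G^{c\lambda,\sigma-1}}\norm{P_{\neq 0}\phi}_{\G^{\lambda,\sigma-3}}$; in the high-coefficient/low-$\phi$ and remainder pieces, the output sits at the frequency of the coefficient, where the full Orr decay $\jap{t}^{-2}\jap{k,\eta}^{-2}$ of $\Delta_L^{-1}$ cancels the $\jap{t}^2\jap{k,\eta}^2$ loss from $(\partial_v-t\partial_z)^2$, and we invoke Lemma~\ref{lem:ProdAlg} with the Sobolev correction to handle the algebra. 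The $v''$ term is easier: $v''$ already carries the derivative $\partial_v v'$, so together with $(\partial_v-t\partial_z)$ it loses one derivative on $\phi$ plus the Gevrey cost on $v''$, which is $\lesssim \epsilon$ by \eqref{ineq:coefbds}, and is controlled analogously.

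Finally, combining all these estimates gives
\begin{align*}
\norm{P_{\neq 0}\phi}_{\G^{\lambda,\sigma-3}} \lesssim \frac{1}{\jap{t}^2}\norm{f}_{\G^{\lambda,\sigma-1}} + C\epsilon\, \norm{P_{\neq 0}\phi}_{\G^{\lambda,\sigma-3}},
\end{align*}
and for $\epsilon$ sufficiently small the last term is absorbed, completing the proof. I expect the paraproduct/commutator analysis in the $E_1$ estimate to be the main technical burden; the other steps are essentially linear-symbol bookkeeping combined with the bootstrap hypotheses.
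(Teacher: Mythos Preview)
Your overall strategy is right (write $\Delta_t$ as a perturbation of $\Delta_L$, use the Orr symbol decay, absorb the small coefficient terms), but you have made the argument harder than necessary by applying the $\Delta_L^{-1}$ symbol bound \emph{before} handling the perturbation terms. This is what forces you into the paraproduct/commutator analysis you flag as ``the main technical burden''.

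The route taken in \cite{BM13} (and mirrored in the proof of Lemma~\ref{lem:LossyEllip2} in this paper) is cleaner: close the perturbation argument at the level of $\norm{\Delta_L P_{\neq 0}\phi}_{\G^{\lambda,\sigma-1}}$ first, and only then apply the symbol bound. From
\[
\Delta_L P_{\neq 0}\phi = P_{\neq 0}f + \bigl(1-(v')^2\bigr)(\partial_v-t\partial_z)^2 P_{\neq 0}\phi - v''(\partial_v-t\partial_z)P_{\neq 0}\phi
\]
take the $\G^{\lambda,\sigma-1}$ norm directly. The algebra property \eqref{ineq:GAlg} and the coefficient bounds \eqref{ineq:coefbds} give
\[
\norm{\bigl(1-(v')^2\bigr)(\partial_v-t\partial_z)^2 P_{\neq 0}\phi}_{\G^{\lambda,\sigma-1}} \lesssim \epsilon\,\norm{(\partial_v-t\partial_z)^2 P_{\neq 0}\phi}_{\G^{\lambda,\sigma-1}} \leq \epsilon\,\norm{\Delta_L P_{\neq 0}\phi}_{\G^{\lambda,\sigma-1}},
\]
where the last step is the pointwise symbol inequality $(\eta-kt)^2 \leq k^2+(\eta-kt)^2$; the $v''$ term is handled the same way using $|\eta-kt| \leq k^2+(\eta-kt)^2$ (valid since $k\neq 0$). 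So for $\epsilon$ small one absorbs to get $\norm{\Delta_L P_{\neq 0}\phi}_{\G^{\lambda,\sigma-1}} \lesssim \norm{f}_{\G^{\lambda,\sigma-1}}$, and \emph{then} your symbol bound $\norm{P_{\neq 0}\phi}_{\G^{\lambda,\sigma-3}} \lesssim \jap{t}^{-2}\norm{\Delta_L P_{\neq 0}\phi}_{\G^{\lambda,\sigma-1}}$ finishes it. No paraproduct, no commutator, no need to split $(\partial_v-t\partial_z)^2 = \Delta_L - \partial_{zz}$.

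Your more elaborate scheme can presumably be pushed through, but the point is that by postponing the lossy $\Delta_L^{-1}$ bound you never have to compare $\norm{(\partial_v-t\partial_z)^2\phi}_{\G^{\lambda,\sigma-1}}$ back to $\norm{\phi}_{\G^{\lambda,\sigma-3}}$ across a coefficient; you compare it to $\norm{\Delta_L\phi}_{\G^{\lambda,\sigma-1}}$, which is trivial.
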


The following lemma quantifies the rapid decay of the $z$-dependent velocity field due 
to the enhanced dissipation, crucial to the proof of \eqref{ineq:energyLo} in \S\ref{sec:ED}. 
The lemma is `lossy' for two reasons. First, due to the presence of $A$ on the RHS of \eqref{ineq:LossyEllip2},
and second due to the hidden loss of $3\alpha$ derivatives on the coefficients of $\Delta_t$ induced by \eqref{ineq:Ddistri}. 
From \eqref{def:D} we see that for high frequencies $D$ simply appears as differentiation 
and Lemma \ref{lem:LossyEllip2} is reduced to Lemma \ref{lem:LossyElliptic}. 
For a given frequency, by the time $D$ begins to increase, it is already past the critical time and we do not need to pay regularity for decay in $\Delta_t^{-1}$. 

\begin{lemma}[Lossy elliptic estimate for $A^\nu$] \label{lem:LossyEllip2}
Under the bootstrap hypotheses for $\epsilon$ sufficiently small, for $\sigma \geq \beta + 3\alpha + 4$ there holds
\begin{align} 
\norm{A^\nu (\grad^\perp P_{\neq 0}\phi)}_2 \lesssim \frac{1}{\jap{t}^2}\left(\norm{A^\nu f}_2 + \norm{Af}_2\right). \label{ineq:LossyEllip2}
\end{align} 
\end{lemma}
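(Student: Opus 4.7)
The plan is to split the Biot--Savart law $\Delta_t\phi=f$ into its linear piece and a small perturbation from the coordinate system, then invert $\Delta_L$ and estimate frequency-by-frequency. Using the formula for $\Delta_t$, I would rewrite
\[
\Delta_L P_{\neq0}\phi \;=\; P_{\neq 0}f \;-\; P_{\neq 0}\bigl[((v')^2-1)(\partial_v-t\partial_z)^2\phi\bigr] \;-\; P_{\neq 0}\bigl[v''(\partial_v-t\partial_z)\phi\bigr],
\]
apply $A^\nu\grad^\perp\Delta_L^{-1}$, and isolate a \emph{main term} (containing $f$) from two \emph{coefficient error terms} (carrying the small quantities $(v')^2-1$ and $v''$).

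For the main term the goal is the pointwise symbol bound
\[
A^\nu(t,k,\eta)\,\frac{|(k,\eta)|}{k^2+(\eta-kt)^2} \;\lesssim\; \jap{t}^{-2}\bigl(A^\nu(t,k,\eta)+A(t,k,\eta)\bigr),\qquad k\neq 0.
\]
The universal estimate $\jap{t}^2\lesssim \jap{k,\eta}^2(k^2+(\eta-kt)^2)/k^2$, a reformulation of what drives Lemma~\ref{lem:LossyElliptic} (using $|kt|\le|\eta|+|\eta-kt|$ and $|k|\ge1$), gives the coarse bound $\jap{t}^2|(k,\eta)|/(k^2+(\eta-kt)^2)\lesssim \jap{k,\eta}^3$. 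I would then split on whether $\jap{D(t,\eta)}^\alpha\le\jap{k,\eta}^{\sigma-\beta-3}$. In that regime---which automatically holds whenever $t\le 2|\eta|$, since there $D\le \nu|\eta|^3/(3\alpha)\lesssim \jap{\eta}^{3\alpha}$---the hypothesis $\sigma\ge\beta+3\alpha+4$ absorbs $\jap{k,\eta}^{\beta+3}\jap{D}^\alpha$ into $\jap{k,\eta}^\sigma\le A$. In the complementary regime $t$ necessarily exceeds $2|\eta|$, so $|\eta-kt|\ge|kt|/2$, and a direct computation sharpens the symbol to $\jap{t}^2|(k,\eta)|/(k^2+(\eta-kt)^2)\lesssim |\eta|/k^2+1/|k|$; examining the two sub-cases ($|\eta|\lesssim |k|$, versus $\jap{D}^\alpha\gg\jap{k,\eta}^{\sigma-\beta-1}$, which forces $\jap{\eta}$ to be bounded) shows the quantity is controlled either by $A^\nu$ as an $O(1)$ multiplier, or by $A$ using the remaining gap together with $J\gtrsim 1$.

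For the two error terms I would combine Lemma~\ref{lem:AnuProd} with the inviscid lossy elliptic estimate of Lemma~\ref{lem:LossyElliptic}. The identity $\grad^\perp\Delta_L^{-1}(\partial_v-t\partial_z)^2=\grad^\perp-\grad^\perp\partial_{zz}\Delta_L^{-1}$ reduces the $(\partial_v-t\partial_z)^2$ error to terms of the same analytic order as the main term, but now sourced by a product $((v')^2-1)\cdot X$ with $X$ carrying only non-zero $z$-modes. Applying \eqref{ineq:Ddistri} with $q^1=(v')^2-1$ (respectively $q^1=v''$) paired against $X$, together with the smoothness bounds \eqref{ineq:coefbds}, the two errors are dominated by $\epsilon\jap{t}^{-2}\bigl(\norm{A^\nu f}_2+\norm{Af}_2\bigr)+\epsilon\jap{t}^{-2}\norm{A^\nu\grad^\perp P_{\neq 0}\phi}_2$, the last of which is reabsorbed into the left-hand side for $\epsilon$ small. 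The main obstacle is the symbol case analysis in the main term: one must show that whenever the enhanced-dissipation weight $\jap{D(t,\eta)}^\alpha$ exceeds the polynomial regularity gap between $A^\nu$ and $A$, the flow is automatically past the critical time $t_{k,\eta}$ for the frequency $\eta$, so the inviscid-damping symbol supplies the missing decay with no further regularity loss---exactly the physical remark made in the discussion preceding the lemma statement.
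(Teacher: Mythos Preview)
Your overall plan (write $\Delta_L P_{\neq0}\phi$ as $P_{\neq0}f$ minus coefficient errors, then handle the errors via the product lemma \eqref{ineq:Ddistri} together with \eqref{ineq:coefbds} and reabsorb) is the same as the paper's. The gap is in your main-term pointwise symbol bound: it is false. Take $k=1$, $|\eta|=N$ large, and $t$ so large that $\jap{\nu t^3}^\alpha\gg N^{\sigma-\beta}e^{\mu\sqrt{N}}$ (nothing forbids this once $\nu>0$; certainly then $t>2N$). In this regime $D(t,\eta)=\nu t^3/(24\alpha)$ and $A/A^\nu\approx N^{\sigma-\beta}e^{\mu\sqrt{N}}/\jap{\nu t^3}^\alpha\ll 1$, so the right-hand side of your claimed inequality is $\approx A^\nu/\jap{t}^2$; but since $k^2+(\eta-kt)^2\approx t^2$ and $|(k,\eta)|\approx N$, the left-hand side is $\approx N\cdot A^\nu/\jap{t}^2$. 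The ratio is $\approx N$, unbounded. This is exactly your problematic sub-case: once $t>2|\eta|$ one has $D=\nu t^3/(24\alpha)$, a function of $t$ alone, so the condition $\jap{D}^\alpha\gg\jap{k,\eta}^{\sigma-\beta-1}$ imposes \emph{no} upper bound on $|\eta|$, contrary to your claim that it ``forces $\jap{\eta}$ to be bounded''.

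The paper does not attempt a single pointwise bound across all frequencies. It splits sharply on $t\le 2|\eta|$ versus $t>2|\eta|$. In the first regime $\jap{D(t,\eta)}^\alpha\lesssim\jap{k,\eta}^{3\alpha}$ uniformly (since $D=\nu|\eta|^3/(3\alpha)\le|\eta|^3$ there), so $A^\nu\lesssim e^{\lambda|k,\eta|^s}\jap{k,\eta}^{\sigma-4}$ by the hypothesis $\sigma\ge\beta+3\alpha+4$, and the inviscid Lemma~\ref{lem:LossyElliptic} applied at regularity $\sigma-4$ delivers the $\jap{t}^{-2}\norm{Af}_2$ contribution directly---the three-derivative loss built into that lemma is what absorbs the $\grad^\perp$. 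In the second regime the paper uses $k^2+(\eta-kt)^2\gtrsim\jap{t}^2$ (valid for $t>2|\eta|$, $k\neq0$) to extract $\jap{t}^{-2}$ from the full $\Delta_L^{-1}$ and then closes by showing $\norm{A^\nu\Delta_L P_{\neq0}\phi}_2\lesssim\norm{A^\nu f}_2$ via the same perturbation/product-lemma loop you already sketched for the error terms. The point is that the $\jap{t}^{-2}$ decay and the control of the $A^\nu$ weight are obtained by \emph{different} mechanisms in the two frequency regimes, rather than through one symbol inequality.
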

\begin{proof}
We define the Fourier multipliers
\begin{align} 
\widehat{\left(\mathbf{1}_{t \leq 2\abs{\partial_v}} f\right)}_k(\eta) &= \mathbf{1}_{t \leq 2\abs{\eta}}\hat{f}_k(\eta), \\
\widehat{\left(\mathbf{1}_{t > 2\abs{\partial_v}} f\right)}_k(\eta) &= \mathbf{1}_{t > 2\abs{\eta}}\hat{f}_k(\eta), 
\end{align} 
and write 
\begin{align} 
\norm{A^\nu \grad^\perp P_{\neq 0}\phi }_2 \leq \norm{\mathbf{1}_{t \leq 2\abs{\partial_v}}A^\nu \grad^\perp P_{\neq 0}\phi}_2 + \norm{\mathbf{1}_{t > 2\abs{\partial_v}} A^\nu \grad^\perp P_{\neq 0}\phi}_2. \label{ineq:AnuLossyDecomp}
\end{align} 
Notice that if $t \leq 2\abs{\eta}$ there holds for $\sigma \geq \beta + 3\alpha + 4$,  
\begin{align*}
A^\nu_k(t,\eta) \approx e^{\lambda\abs{k,\eta}^s} \jap{k,\eta}^{\beta + 3\alpha} \lesssim e^{\lambda\abs{k,\eta}^s} \jap{k,\eta}^{\sigma - 4},
\end{align*}
and therefore, by Lemma \ref{lem:LossyElliptic},  
\begin{align} 
\norm{\mathbf{1}_{t \leq 2 \abs{\partial_v}} A^\nu \grad^\perp P_{\neq 0}\phi}_2 & \lesssim \norm{\grad^\perp P_{\neq 0} \phi}_{\G^{\lambda,\sigma- 4}} \lesssim \jap{t}^{-2}\norm{Af}_{2}. \label{ineq:AnuLossyHiFreq}
\end{align}
Next, focus on lower frequencies: 
\begin{align} 
\norm{\mathbf{1}_{t > 2\abs{\partial_v}}A^\nu P_{\neq 0}\phi}^2_{2} & = \sum_{k \neq 0}\int_\eta \mathbf{1}_{t > 2\abs{\eta}} e^{2\lambda \abs{k,\eta}^s}\jap{k,\eta}^{2\beta} \jap{D(\eta)}^\alpha  \abs{\hat{\phi}_k(\eta)}^2 d\eta \nonumber \\ 
& = \sum_{k \neq 0}\int_\eta \mathbf{1}_{t > 2\abs{\eta}} e^{2\lambda \abs{(k,\eta)}^s}\frac{\jap{k,\eta}^{2\beta} \jap{D(\eta)}^\alpha}{(k^2 + \abs{\eta - kt}^2)^2} (k^2 + \abs{\eta - kt}^2)^2 \abs{\hat{\phi}_k(\eta)}^2 d\eta \nonumber \\ 
& \lesssim \frac{1}{\jap{t}^4}\norm{\Delta_L P_{\neq 0}A^\nu \phi}^2_{2}. \label{ineq:DeltaLphi2}
\end{align}
As in the proof of Lemma \ref{lem:LossyElliptic} (see \cite{BM13}), we write $\Delta_t$ as a perturbation of $\Delta_L$ via 
\begin{align*}  
\Delta_LP_{\neq 0}\phi = P_{\neq 0}f + (1 - (v^{\prime})^2)(\partial_y - t\partial_z)^2P_{\neq 0}\phi - v^{\prime\prime}(\partial_y - t\partial_z)P_{\neq 0}\phi. 
\end{align*} 
Applying \eqref{ineq:Ddistri} implies
\begin{align*} 
\norm{\Delta_L A^\nu P_{\neq 0}\phi}_{2} \lesssim \norm{A^\nu f}_{2} + \norm{1 - (v^\prime)^2}_{\G^{\lambda,\beta + 3\alpha}}\norm{\Delta_L P_{\neq 0}A^\nu \phi}_{2} + \norm{v^{\prime\prime}}_{\G^{\lambda,\beta + 3\alpha}} \norm{(\partial_y - t\partial_z)P_{\neq 0}A^\nu \phi}_{2}. 
\end{align*}
Therefore, \eqref{ineq:coefbds} and $\sigma \geq \beta + 3\alpha + 1$ imply
\begin{align*} 
\norm{\Delta_L A^\nu P_{\neq 0}\phi}_{2} \lesssim \norm{A^\nu f}_{2} + \epsilon\norm{\Delta_L P_{\neq 0}A^\nu \phi}_{2}. 
\end{align*} 
 Together with \eqref{ineq:DeltaLphi2}, for $\epsilon$ is sufficiently small we get
\begin{align*} 
\norm{\mathbf{1}_{t > 2\abs{\partial_v}}A^\nu P_{\neq 0}\phi}_{2} & \lesssim \jap{t}^{-2} \norm{A^\nu f}_2, 
\end{align*} 
which, with \eqref{ineq:AnuLossyHiFreq} and \eqref{ineq:AnuLossyDecomp}, completes the proof of Lemma \ref{lem:LossyEllip2}. 
\end{proof}

\section{High norm vorticity estimate \eqref{ineq:energyHi}} \label{sec:HighNorm}
We are now ready to begin the proof of \eqref{ineq:energyHi}. 
Computing the evolution of $Af$ from \eqref{def:vortNice} gives, 
\begin{align} 
\frac{d}{dt}\frac{1}{2}\norm{Af}_2^2 = -CK_\lambda - CK_w - \int Af A(u\cdot \grad f)dv dz + \nu\int Af A \left(\tilde{\Delta_t} f\right) dv dz. \label{eq:AfEvo}
\end{align} 
Treating the third term, the Euler nonlinearity, comprises the majority of the work in \cite{BM13}. 
Here, we need only replace the role of $[\partial_t v]$ in \cite{BM13} with $g$ here and proceed in the same manner as 
used in \S5,\S6 and \S7 of \cite{BM13} to deduce (under the bootstrap hypotheses for $\epsilon$ sufficiently small) that
\begin{align} 
- \int Af A(u\cdot \grad f)dv dz &  \lesssim \epsilon CK_\lambda + \epsilon CK_w + \epsilon CK^{v,1}_{\lambda} + \epsilon CK^{v,1}_{w} \nonumber \\ & \quad + \epsilon^3\left(\sum_{i=1}^2 CCK_\lambda^i + CCK_w^i\right) + \frac{\epsilon^3}{\jap{t}^{1+s}}. \label{ineq:EulerHiNrm}
\end{align}
The new difficulty in deducing \eqref{ineq:energyHi} from \eqref{eq:AfEvo} is commuting $A$ and $\tilde{\Delta_t}$ in the last term. 
We follow an approach which is consistent with the proofs of the elliptic estimates in \S\ref{sec:Elliptic} and \cite{BM13}.
Following those arguments we write,  
\begin{align} 
\nu\int Af A(\tilde{\Delta_t} f)  & = \nu\int Af A(\Delta_L f) - \nu\int Af A\left[\left(1- (v^\prime)^2\right) (\partial_v - t\partial_z)^2 f\right] dv dz \nonumber \\  
& = -\nu\norm{\sqrt{-\Delta_L} A f}_2^2 - \nu\sum_{k \neq 0}\int Af_k A\left[\left(1- (v^\prime)^2\right) (\partial_v - t\partial_z)^2 f_k\right] dv dz \nonumber \\ & \quad - \nu\int Af_0 A\left[\left(1- (v^\prime)^2\right) \partial_v^2 f_0\right] dv dz \nonumber \\ 
& = -\nu\norm{\sqrt{-\Delta_L} A f}_2^2 + E^{\neq} + E^0. \label{eq:HiNrmDiss}
\end{align} 
Note that while the first term looks like a powerful dissipation term ($\sqrt{-\Delta_L}$ contains powers of $t$), since we are sending $\nu \rightarrow 0$ it can only really be useful for controlling the error terms $E^{\neq}$ and $E^0$.
Especially $E^{\neq}$ is dangerous as while the leading order dissipation degenerates near the critical times, we will see that $E^{\neq}$ can still be large.   
Since the operator $\tilde{\Delta_t}$ is very anisotropic between non-zero frequencies in $z$ and the zero frequencies, the two cases, $E^0$ and $E^{\neq}$, are treated separately by different methods in \S\ref{sec:DissErrNZHi} and \S\ref{sec:ZeroModeDiss} respectively. 

\subsection{Dissipation error term: non-zero frequencies} \label{sec:DissErrNZHi}
For future notational convenience, use the short hand 
\begin{align} 
G(t,v) & = \left(1-(v^\prime)^2\right)(t,v).  \label{def:Gshort}
\end{align} 
We begin the treatment of $E^{\neq}$ by decomposing with an inhomogeneous paraproduct in $v$ only: 
\begin{align*} 
E^{\neq} & =   \frac{\nu}{2\pi}\sum_{M \geq 8} \sum_{k \neq 0} \int A\overline{\hat{f}}_k(t,\eta) A_k(t,\eta)\hat{G}(\eta - \xi)_{M} \abs{\xi - tk}^2 \hat{f}_k(\xi)_{<M/8} d\xi d\eta \\ 
& \quad + \frac{\nu}{2\pi} \sum_{M \geq 8} \sum_{k \neq 0} \int A\overline{\hat{f}}_k(t,\eta) A_k(t,\eta)\hat{G}(\eta - \xi)_{<M/8} \abs{\xi - tk}^2 \hat{f}_k(\xi)_M d\xi d\eta \\ 
& \quad + \frac{\nu}{2\pi}\sum_{M \in \mathbb D} \sum_{M/8 \leq M^\prime \leq 8M} \sum_{k \neq 0} \int A\overline{\hat{f}}_k(t,\eta) A_k(t,\eta)\hat{G}(\eta - \xi)_{M} \abs{\xi - tk}^2 \hat{f}_k(\xi)_{M^\prime} d\xi d\eta \\ 
& = E^{\neq}_{HL} + E^{\neq}_{LH} + E^{\neq}_{\R}.  
\end{align*}  

The $E^{\neq}_{LH}$ term is the easier one, so we will treat this one first. 
The goal is to pass the $A_k(\eta)$ onto $\hat{f}_k(\xi)$ 
and split the $(\partial_v - t\partial_z)^{2}$ between the two factors of $f$.  
The latter will use the following triangle inequality, which holds specifically for $k \neq 0$,
\begin{align} 
\abs{\xi - kt} \leq \abs{\eta - \xi} + \abs{\eta - kt} \leq \jap{\eta - \xi}\left(\abs{k} + \abs{\eta - kt}\right). \label{ineq:triTriv}
\end{align}
This allows to split the derivatives by losing regularity on the coefficient, which is fine since it is in `low frequency' (we are essentially integrating by parts). 
Note that this will not be possible in treating $E^{\neq}_{HL}$. 
An important point is that \eqref{ineq:BasicJswap} (as opposed to \eqref{ineq:WFreqCompRes}) applies to transfer $A_k(\eta)$ to $A_k(\xi)$ - resonant vs non-resonant losses can only occur when comparing different frequencies in $z$. 
Since on the support of the integrand there holds 
\begin{subequations} \label{ineq:EneqFreqCon}
\begin{align} 
\abs{\abs{k,\eta} - \abs{k,\xi}} & \leq \abs{\xi - \eta} \leq \frac{3}{16}\abs{\xi} \leq \frac{3}{16}\abs{k,\xi}, \\
\frac{13}{16}\abs{k,\xi} &\leq \abs{k,\eta} \leq \frac{19}{16}\abs{k,\xi}, 
\end{align} 
\end{subequations}
which implies by \eqref{lem:scon} we have for some $c \in (0,1)$, 
\begin{align*}
\abs{E^{\neq}_{LH}} \lesssim \nu\sum_{M \geq 8} \sum_{k \neq 0} \int \abs{A\hat{f}_k(t,\eta)e^{c\lambda\abs{\eta-\xi}^s}\hat{G}(\eta - \xi)_{<M/8} \frac{J_k(\eta)}{J_k(\xi)} \abs{\xi - tk}^2 A\hat{f}_k(\xi)_M} d\xi d\eta. 
\end{align*} 
Applying \eqref{ineq:triTriv} together with \eqref{ineq:BasicJswap} followed by \eqref{ineq:IncExp} implies
\begin{align*} 
\abs{E^{\neq}_{LH}} \lesssim \nu\sum_{M \geq 8} \sum_{k \neq 0} \int \abs{ \sqrt{-\Delta_L}  A\hat{f}_k(t,\eta)e^{\lambda\abs{\eta-\xi}^s}\jap{\eta-\xi}\hat{G}(\eta - \xi)_{<M/8} \abs{\xi - tk} A\hat{f}_k(\xi)_M} d\xi d\eta,  
\end{align*} 
where we are slightly abusing notation by using $\sqrt{-\Delta_L} \hat{f}_k(\eta) = \left(\abs{k} + \abs{\eta-kt}\right) \hat{f}_k(\eta)$. 
Therefore, by \eqref{ineq:L2L2L1} ($\sigma > 1$), almost orthogonality \eqref{ineq:GeneralOrtho} and \eqref{ineq:coefbds} (recall the shorthand \eqref{def:Gshort}),
\begin{align} 
\abs{E^{\neq}_{LH}} & \lesssim \nu\sum_{M \geq 8} \sum_{k \neq 0} \norm{(1-(v^\prime)^2)_{<M/8}}_{\G^{\lambda,\sigma}} \norm{\sqrt{-\Delta_L}(Af_k)_{\sim M}}_2^2 \nonumber \\
& \lesssim \epsilon\nu \norm{\sqrt{-\Delta_L}Af}_2^2. \label{ineq:EneqLH}
\end{align} 
This term is then absorbed by the leading order dissipation term in \eqref{eq:HiNrmDiss} for $\epsilon$ sufficiently small.

Now, let us turn to the more delicate HL term. 
Since the paraproduct decomposition was with respect to $v$ but the norm depends on both $z$ and $v$, 
we will need to divide into separate contributions corresponding to when $k$ is large compared to $\eta$ and vice-versa. 
A similar issue arose, for example, in \cite[Proposition 2.5]{BM13}. 
Hence,  
\begin{align*} 
\abs{E^{\neq}_{HL}} & \lesssim \nu\sum_{M \geq 8} \int\left[\mathbf{1}_{\abs{k} \geq \frac{1}{16}\abs{\eta}} + \mathbf{1}_{\abs{k} < \frac{1}{16}\abs{\eta}}  \right] \abs{A\hat{f}_k(\eta)_{\sim M} A_k(t,\eta)\hat{G}(\eta - \xi)_{M} \abs{\xi - tk}^2\hat{f}_k(\xi)_{<M/8} }d\xi d\eta \\ 
& = E_{HL}^{\neq,z} + E_{HL}^{\neq,v}. 
\end{align*} 
In the `$z$' case,  we can assume that  the `derivatives' are still landing on $f_k(\xi)$, 
and hence we may treat this term in a manner similar to $E^{\neq}_{LH}$.
 On the support of the integrand, we claim that there is some $c \in (0,1)$ such that,
\begin{align} 
\abs{k,\eta}^s \leq \abs{k,\xi}^s + c\abs{\eta - \xi}^s. \label{ineq:lgainScon}
\end{align} 
To see \eqref{ineq:lgainScon}, one can consider separately the cases $\frac{1}{16}\abs{\eta} \leq \abs{k} \leq 16\abs{\eta}$ and $\abs{k} > 16\abs{\eta}$, applying \eqref{lem:strivial}  and \eqref{lem:scon} respectively.
Therefore, by \eqref{ineq:lgainScon} we have
\begin{align*}
\abs{E_{HL}^{\neq,z}} & \lesssim \nu\sum_{M \geq 8} \int \mathbf{1}_{\abs{k} \geq \frac{1}{16}\abs{\eta}} \abs{A\hat{f}_k(\eta)_{\sim M} e^{c\lambda\abs{\eta-\xi}} \hat{G}(\eta - \xi)_{M} J_k(\eta)  \abs{\xi - tk}^2 \jap{k,\eta}^\sigma e^{\lambda\abs{k,\xi}^s}\hat{f}_k(\xi)_{<M/8} }d\xi d\eta. 
\end{align*} 
Moreover, on the support of the integrand we have $\jap{k,\eta} \lesssim \jap{k} \leq \jap{k,\xi}$. 
Therefore, by \eqref{ineq:triTriv} and \eqref{ineq:BasicJswap} followed by \eqref{ineq:IncExp} ($c < 1$ and $s > 1/2$) we get
\begin{align*} 
\abs{E_{HL}^{\neq,z}} & \lesssim \nu\sum_{M \geq 8} \int \mathbf{1}_{\abs{k} \geq \frac{1}{16}\abs{\eta}} \abs{\sqrt{-\Delta_L}A\widehat{f}_k(\eta)_{\sim M} e^{\lambda\abs{\eta-\xi}^s} \jap{\eta-\xi} \hat{G}(\eta - \xi)_{M} \abs{\xi - tk} A\hat{f}_k(\xi)_{<M/8} }d\xi d\eta. 
\end{align*} 
By \eqref{ineq:L2L2L1} ($\sigma > 2$), Cauchy-Schwarz in $M$ and $k$, almost orthogonality \eqref{ineq:GeneralOrtho} and \eqref{ineq:coefbds} we get
\begin{align} 
\abs{E_{HL}^{\neq,z}} & \lesssim \nu\sum_{M \geq 8} \sum_{k \neq 0} \norm{(1-(v^\prime)^2)_{M}}_{\G^{\lambda,\sigma}} \norm{\sqrt{-\Delta_L}(Af_k)_{\sim M}}_2  \norm{\sqrt{-\Delta_L}(Af_k)_{<M/8}}_2 \nonumber \\
& \lesssim \epsilon\nu \norm{\sqrt{-\Delta_L}Af}_2^2 \label{ineq:EneqHLz}
\end{align} 
This term is then absorbed by the leading order dissipation term in \eqref{eq:HiNrmDiss} for $\epsilon$ sufficiently small.

Turn now to the more challenging $v$ case, $E_{HL}^{\neq,v}$, which corresponds to $\abs{k,\eta} \approx \abs{\eta-\xi} \gg \abs{k,\xi}$. 
The challenge here is that we cannot simply use \eqref{ineq:triTriv}, as this requires more regularity on the coefficients then we have to spend. 
Hence we will have to find another way of controlling this term and will find it most challenging near the critical times, as there the leading order dissipation could become weaker than the error term. 
Indeed, using that $\abs{\xi} \leq \frac{3}{16}\abs{\eta-\xi}$ on the support of the integrand, analogous to \eqref{ineq:EneqFreqCon} there holds on the support of the integrand: 
\begin{align}
 \abs{\abs{\eta - \xi}-\abs{k,\eta}} \leq \abs{k,\xi} \leq \frac{1}{16}\abs{\eta} + \abs{\xi} \leq \frac{1}{16}\abs{\eta-\xi} + \frac{17}{16}\abs{\xi} \leq \frac{67}{256}\abs{\eta-\xi}. \label{ineq:EneqHLvreqLoc}
\end{align}
Hence we may apply \eqref{lem:scon} to show there exists some $c \in (0,1)$ such that 
\begin{align*}
\abs{E_{HL}^{\neq,v}} & \lesssim \nu\sum_{M \geq 8} \int\mathbf{1}_{\abs{k} < \frac{1}{16}\abs{\eta}} \abs{A\hat{f}_k(\eta)_{\sim M} A\widehat{G}(\eta - \xi)_{M} \frac{J_k(\eta)}{J_0(\eta-\xi)} e^{c\lambda\abs{k,\xi}^s}\abs{\xi - tk}^2 \hat{f}_k(\xi)_{<M/8}}  d\xi d\eta. 
\end{align*} 
Near the resonant times, the treatment will be different, so divide further into resonant and non-resonant contributions: 
\begin{align*} 
\abs{E_{HL}^{\neq,v}} & \lesssim \nu\sum_{M \geq 8} \int \left[\chi^R + \chi^\ast \right]
 \abs{A\hat{f}_k(\eta)_{\sim M}  A\widehat{G}(\eta - \xi)_{M} \frac{J_k(\eta)}{J_0(\eta-\xi)} e^{c\lambda\abs{k,\xi}^s} \abs{\xi - tk}^2 \hat{f}_k(\xi)_{<M/8}} d\xi d\eta \\ 
& = E_{HL}^{\neq;v,R} + E_{HL}^{\neq;v,\ast},
\end{align*} 
where $\chi^R = \mathbf{1}_{t \in \mathbf{I}_{k,\eta-\xi} \cap \I_{k,\eta}}\mathbf{1}_{\abs{k} < \frac{1}{16}\abs{\eta}}$ and $\chi^\ast = (1-\mathbf{1}_{t \in \mathbf{I}_{k,\eta-\xi} \cap \I_{k,\eta}})\mathbf{1}_{\abs{k} < \frac{1}{16}\abs{\eta}}$. 

Turn first to the `R' contribution. 
Applying \eqref{ineq:WFreqCompRes} followed by \eqref{ineq:IncExp} implies (recall definitions \eqref{def:AR} and \eqref{def:wR}),  
\begin{align*} 
E_{HL}^{\neq;v,R} & \lesssim \nu \sum_{k \neq 0}\sum_{M \geq 8} \int \chi^R \abs{A \hat{f}_k(\eta)_{\sim M} A^R(\eta-\xi) \widehat{G}(\eta - \xi)_{M} \abs{\xi - tk}^2 e^{\lambda \abs{k,\xi}^s} \hat{f}_k(\xi)_{<M/8}} d\xi d\eta. 
\end{align*}  
Since we are near the resonant time, the leading dissipation term in \eqref{eq:HiNrmDiss} is very weak, and in particular, it cannot directly recover the $t^2$ present in the low frequency factor. 
In order to deal with this fundamental difficulty, we will use both the enhanced dissipation estimate \eqref{ineq:Boot_energyLo} as well as the $CCK$ control on $G$ expressed in \eqref{ineq:Boot_ARh}.  
Using 
\begin{align} 
\abs{k} + \abs{\xi - tk} & \lesssim \jap{t}\jap{k,\xi}, \label{ineq:TrivDeltaLBd}
\end{align} 
as well as \eqref{dtw}, \eqref{ineq:partialtw_endpt} and the definition of $\chi^R$ we have
\begin{align*} 
E_{HL}^{\neq;v,R}& \lesssim \nu \sum_{k \neq 0}\sum_{M \geq 8} \int \chi^R \left(\abs{k} + \abs{\eta - kt}\right)^{1/2} \abs{A\hat{f}_k(\eta)_{\sim M}} A^R(\eta-\xi) \left(\frac{1}{\sqrt{1 + \abs{\frac{\eta}{k} - t}}}\right) \\ &\quad \quad \times \widehat{G}(\eta - \xi)_{M}\left(\abs{k} + \abs{\xi - tk}\right)^{1/2}  \jap{t}^{3/2}\jap{k,\xi}^{3/2}e^{\lambda \abs{k,\xi}^s} \abs{\hat{f}_k(\xi)_{<M/8}} d\xi d\eta \\ 
& \lesssim \nu \sum_{k \neq 0}\sum_{M \geq 8} \int \chi^R \left(\abs{k} + \abs{\eta - kt}\right)^{1/2} \abs{A\hat{f}_k(\eta)_{\sim M} \left( \sqrt{\frac{\partial_t w(\eta-\xi)}{w(\eta - \xi)}} + \frac{\abs{\eta-\xi}^{s/2}}{\jap{t}^s}\right) A^R(\eta-\xi)} \\ &\quad \quad \times \widehat{G}(\eta - \xi)_{M}\left(\abs{k} + \abs{\xi - tk}\right)^{1/2}  \jap{t}^{3/2}\jap{k,\xi}^{5/2}e^{\lambda \abs{k,\xi}^s} \abs{\hat{f}_k(\xi)_{<M/8}} d\xi d\eta. 
\end{align*} 
Therefore, by \eqref{ineq:SobExp}, \eqref{ineq:L2L2L1} (using $\beta > 4$) and Cauchy-Schwarz,
\begin{align*} 
E_{HL}^{\neq;v,R} & \lesssim \nu t^{3/2} \sum_{k \neq 0} \sum_{M \geq 8} \norm{(-\Delta_L)^{1/4}A(f_k)_{\sim M}}_2 \norm{(-\Delta_L)^{1/4} f_k}_{\G^{\lambda,\beta}} \\ & \quad\quad \times \norm{\left(\sqrt{\frac{\partial_t w}{w}} + \frac{\abs{\partial_v}^{s/2}}{\jap{t}^s} \right)A^R \left(1- (v^\prime)^2\right)_M }_2 \\ 
& \lesssim \nu t^{3/2} \sum_{M \geq 8} \left( \sum_{k \neq 0} \norm{(-\Delta_L)^{1/4}A(f_k)_{\sim M}}^2_2\right)^{1/2} \left( \sum_{k \neq 0}\norm{(-\Delta_L)^{1/4} f_k}^2_{\G^{\lambda,\beta}} \right)^{1/2} \\ & \quad\quad \times \norm{\left(\sqrt{\frac{\partial_t w}{w}} + \frac{\abs{\partial_v}^{s/2}}{\jap{t}^s} \right)A^R \left(1- (v^\prime)^2\right)_M }_2 \\ 
& \lesssim \nu t^{3/2} \sum_{M \geq 8} \norm{Af_{\sim M}}_2^{1/2} \norm{f}_{\G^{\lambda,\beta}}^{1/2} \norm{(-\Delta_L)^{1/2}Af_{\sim M}}^{1/2}_2 \norm{(-\Delta_L)^{1/2}Af}^{1/2}_2 \\ 
& \quad\quad \times \norm{ \left(\sqrt{\frac{\partial_t w}{w}} + \frac{\abs{\partial_v}^{s/2}}{\jap{t}^s} \right) A^R \left(1- (v^\prime)^2\right)_M }_2. 
\end{align*}
Now we crucially apply the enhanced dissipation by using \eqref{ineq:AnuDecay} and \eqref{ineq:Boot_energyLo} to kill the extra powers of time, 
\begin{align*}  
E_{HL}^{\neq;v,R} & \lesssim  \frac{\nu t^{3/2}}{\jap{\nu t^3}^{\alpha/2}} \sum_{M \geq 8}\norm{Af_{\sim M}}_2^{1/2} \norm{A^\nu f}_2^{1/2} \norm{(-\Delta_L)^{1/2}Af_{\sim M}}^{1/2}_2 \norm{(-\Delta_L)^{1/2}Af}^{1/2}_2 \\ 
& \quad\quad \times \norm{ \left(\sqrt{\frac{\partial_t w}{w}} + \frac{\abs{\partial_v}^{s/2}}{\jap{t}^s} \right) A^R \left(1- (v^\prime)^2\right)_M }_2 \\ 
& \lesssim  \sqrt{\nu \epsilon} \sum_{M \geq 8}\norm{Af_{\sim M}}_2^{1/2} \norm{(-\Delta_L)^{1/2}Af_{\sim M}}^{1/2}_2 \norm{(-\Delta_L)^{1/2}Af}^{1/2}_2 \\ & \quad\quad \times \norm{ \left(\sqrt{\frac{\partial_t w}{w}} + \frac{\abs{\partial_v}^{s/2}}{\jap{t}^s} \right) A^R \left(1- (v^\prime)^2\right)_M }_2. 
\end{align*}
Therefore, by \eqref{ineq:Boot_energyLo}, \eqref{ineq:Boot_energyHi}, Cauchy-Schwarz in $M$ and almost orthogonality \eqref{ineq:GeneralOrtho},
\begin{align} 
E_{HL}^{\neq;v,R}  & \lesssim \nu \sum_{M \geq 8} \norm{Af_{\sim M}} \norm{(-\Delta_L)^{1/2}Af_{\sim M}}_2 \norm{(-\Delta_L)^{1/2}Af}_2 \nonumber \\ 
& \quad + \epsilon \sum_{M \geq 8}\norm{ \left(\sqrt{\frac{\partial_t w}{w}} + \frac{\abs{\partial_v}^{s/2}}{\jap{t}^s} \right) A^R \left(1- (v^\prime)^2\right)_M }_2^2  \nonumber \\ 
& \lesssim  \epsilon \nu \norm{(-\Delta_L)^{1/2}Af}_2^2 + \epsilon\norm{ \left(\sqrt{\frac{\partial_t w}{w}} + \frac{\abs{\partial_v}^{s/2}}{\jap{t}^s} \right) A^R \left(1- (v^\prime)^2\right)}_2^2  . \label{ineq:T1Hard}
\end{align} 
The first term is absorbed by the dissipation term in \eqref{eq:HiNrmDiss} and the latter term is controlled by the bootstrap control on the $CCK^1$ terms in \eqref{ineq:Boot_ARh}. 

The treatment of $E_{HL}^{\neq;v,\ast}$ is easier as the leading order dissipation is much stronger at this set of frequencies (although we still cannot apply \eqref{ineq:triTriv}).
By \eqref{ineq:BasicJswap} followed by \eqref{ineq:IncExp} we have
\begin{align*}
E_{HL}^{\neq;v,\ast} & \lesssim \nu\sum_{M \geq 8} \int A\abs{\hat{f}_k(\eta)_{\sim M}} \chi^\ast A_0(\eta-\xi)\abs{\hat{G}(\eta - \xi)_{M}} \abs{\xi - tk}^2 e^{\lambda \abs{k,\xi}^s} \abs{\hat{f}_k(\xi)_{<M/8}} d\xi d\eta. 
\end{align*}
We will introduce $\sqrt{-\Delta_L}$ on the first factor with the goal of directly using the dissipation: 
\begin{align*} 
E_{HL}^{\neq;v,\ast} & \lesssim \nu\sum_{M \geq 8} \int \sqrt{-\Delta_L} A\abs{\hat{f}_k(\eta)_{\sim M}} \chi^\ast A_0(\eta-\xi)\abs{\hat{G}(\eta - \xi)_{M}} \frac{\abs{\xi - tk}^2}{\sqrt{k^2 + \abs{\eta - kt}^2}} e^{\lambda \abs{k,\xi}^s} \abs{\hat{f}_k(\xi)_{<M/8}} d\xi d\eta.
\end{align*} 
By the definition of $\chi^\star$ and Lemma \ref{lem:wellsep} (using that $\eta \approx \eta-\xi$), on the support of the integrand we have one of two possibilities: either $\abs{\eta - kt} \gtrsim t$ or $\abs{\xi} \gtrsim t$ 
Therefore, using also \eqref{ineq:TrivDeltaLBd},  
\begin{align*} 
E_{HL}^{\neq;v,\ast} & \lesssim \nu\sum_{M \geq 8} \int \sqrt{-\Delta_L} A\abs{\hat{f}_k(\eta)_{\sim M}} \chi^\ast A_0(\eta-\xi)\abs{\hat{G}(\eta - \xi)_{M}} \jap{t}^{-1}\abs{\xi - tk}^2 \jap{\xi} e^{\lambda \abs{k,\xi}^s} \abs{\hat{f}_k(\xi)_{<M/8}} d\xi d\eta.
\end{align*} 
Therefore, by \eqref{ineq:L2L2L1} ($\beta > 3$), Cauchy-Schwarz (in $M$ and $k$) and \eqref{ineq:vp2m1bd} we have, 
\begin{align} 
E_{HL}^{\neq;v,\ast} &  \lesssim \nu\sum_{k \neq 0}\sum_{M \geq 8} \int \abs{\sqrt{-\Delta_L} A\hat{f}_k(\eta)_{\sim M}\chi^\ast A_0(\eta-\xi) \hat{G}(\eta - \xi)_{M}} \nonumber \\ 
& \quad\quad\quad \times \jap{k,\xi}^2 \abs{\xi - tk} e^{\lambda \abs{k,\xi}^s} \abs{\hat{f}_k(\xi)_{<M/8}} d\xi d\eta \nonumber \\ 
& \lesssim \nu\sum_{M \geq 8} \norm{A \left(1- (v^\prime)^2\right)_M}_2\norm{\sqrt{-\Delta_L}Af_{\sim M}}_2\norm{\sqrt{-\Delta_L}f_{<M/8}}_{\G^{\lambda,\beta}} \nonumber \\ 
& \lesssim \epsilon \nu\norm{\sqrt{-\Delta_L}Af}_2^2. \label{ineq:EHLneqAst}
\end{align}
Hence, for $\epsilon$ chosen sufficiently small, this is absorbed by the dissipation term in \eqref{eq:HiNrmDiss}. 

Finally, turn to the remainder term $E^{\neq}_\R$. 
Dividing into two contributions depending on the relative size of the $z$ vs $v$ frequencies, 
\begin{align*}
E^{\neq}_{\R} & \lesssim \nu \sum_{M \in \mathbb D} \sum_{M/8 \leq M^\prime \leq 8M} \sum_{k \neq 0} \int \left[ \mathbf{1}_{\abs{k} > 100\abs{\xi}} + \mathbf{1}_{\abs{k} \leq 100\abs{\xi}}\right] \abs{A \hat{f}_k(t,\eta) A_k(t,\eta)\widehat{G}(\eta - \xi)_{M} \abs{\xi - tk}^2 \hat{f}_k(\xi)_{M^\prime}} d\xi d\eta \\ 
& = E^{\neq;z}_{\R} + E^{\neq;v}_{\R}. 
\end{align*}  
Consider $E^{\neq;z}_{\R}$ first. On the support of the integrand,
\begin{align} 
\abs{\abs{k,\eta} - \abs{k,\xi}} \leq \abs{\eta - \xi} \leq \frac{3M^\prime}{2} \leq 12 M \leq 24\abs{\xi} \leq \frac{24}{100}\abs{k,\xi},\label{ineq:M1TR}
\end{align}
and hence inequality \eqref{lem:scon} implies there is some $c \in (0,1)$ such that
\begin{align*}
\abs{E^{\neq;z}_{\R}} & \lesssim \nu \sum_{M \in \mathbb D} \sum_{M \approx  M^\prime} \sum_{k \neq 0} \int \mathbf{1}_{\abs{k} > 100\abs{\xi}} \abs{A \hat{f}_k(t,\eta) e^{c\lambda\abs{\eta-\xi}^s}\widehat{G}(\eta - \xi)_{M} \abs{\xi - tk}^2 \frac{J_k(\eta)}{J_k(\xi)} A\hat{f}_k(\xi)_{M^\prime}} d\xi d\eta.  
\end{align*} 
Therefore, by \eqref{ineq:triTriv}, \eqref{ineq:BasicJswap}, \eqref{ineq:IncExp} and \eqref{ineq:SobExp} (using $c < 1$ and $s > 1/2$) we have
\begin{align*}
\abs{E^{\neq;z}_{\R}} & \lesssim \nu \sum_{M \in \mathbb D} \sum_{M \approx  M^\prime} \sum_{k \neq 0} \int \mathbf{1}_{\abs{k} > 100\abs{\xi}} \abs{\sqrt{-\Delta_L}A\widehat{f}_k(t,\eta) e^{\lambda\abs{\eta-\xi}^s}\widehat{G}(\eta - \xi)_{M} \abs{\xi - tk} A\hat{f}_k(\xi)_{M^\prime}} d\xi d\eta.  
\end{align*} 
Therefore, by \eqref{ineq:L2L2L1} followed by Cauchy-Schwarz in $k$ and $M$, almost orthogonality \eqref{ineq:GeneralOrtho} and \eqref{ineq:coefbds}, 
\begin{align} 
\abs{E^{\neq;z}_{\R}} & \lesssim \nu \sum_{M \in \mathbb D} \sum_{k \neq 0} \norm{\sqrt{-\Delta_L}A f_k}_2 \norm{\left(1 - (v^\prime)^2 \right)_{M}}_{\G^{\lambda,\sigma}} \norm{\sqrt{-\Delta_L}A (f_k)_{\sim M}}_2 \nonumber \\ 
& \lesssim \epsilon \nu \norm{\sqrt{-\Delta_L}A f }^2_2,  \label{ineq:EneqRz}
\end{align} 
which can then be absorbed by the leading dissipation term in \eqref{eq:HiNrmDiss} for $\epsilon$ sufficiently small.  

Finally turn to $E^{\neq;v}_{\R}$. 
On the support of the integrand in this case, 
\begin{align*} 
\abs{\eta - \xi} & \leq 24\abs{\xi} \leq 24\abs{k,\xi} \\ 
\abs{k,\xi} & \leq 101\abs{\xi} \leq 2424\abs{\eta - \xi}. 
\end{align*}
Therefore by \eqref{lem:strivial} there exists a $c \in (0,1)$ such that
\begin{align*} 
\abs{E^{\neq;v}_{\R}} & \lesssim \nu \sum_{M \in \mathbb D} \sum_{M \approx  M^\prime} \sum_{k \neq 0} \int \mathbf{1}_{\abs{k} \leq 100\abs{\xi}} \abs{A \hat{f}_k(\eta) e^{c\lambda\abs{\eta-\xi}^s}\jap{\xi - \eta}^{\sigma/2}\widehat{G}(\eta - \xi)_{M}} \\ & \quad\quad \times \abs{\xi - tk}^2 J_k(\eta) e^{c\lambda\abs{k,\xi}^s} \jap{k,\xi}^{\sigma/2} \abs{\hat{f}_k(\xi)_{M^\prime}} d\xi d\eta.   
\end{align*} 
By Lemma \ref{basic}, \eqref{ineq:IncExp}, \eqref{ineq:triTriv} and \eqref{ineq:SobExp}, 
\begin{align*} 
\abs{E^{\neq;v}_{\R}} & \lesssim \nu \sum_{M \in \mathbb D} \sum_{M \approx  M^\prime} \sum_{k \neq 0} \int \mathbf{1}_{\abs{k} \leq 100\abs{\xi}} \abs{\sqrt{-\Delta_L} A \widehat{f}_k(t,\eta) e^{\lambda\abs{\eta-\xi}^s} \widehat{G}(\eta - \xi)_{M}} \\ & \quad\quad \times \abs{\xi - tk} e^{\lambda\abs{k,\xi}^s} \abs{\hat{f}_k(\xi)_{M^\prime}} d\xi d\eta. 
\end{align*}  
Finally, by \eqref{ineq:L2L2L1}, Cauchy-Schwarz in $M$ and $k$, almost orthogonality \eqref{ineq:GeneralOrtho} and \eqref{ineq:coefbds}, 
\begin{align} 
\abs{E^{\neq;v}_{\R}} & \lesssim \nu \sum_{M \in \mathbb D} \sum_{k \neq 0} \norm{\sqrt{-\Delta_L}A f_k}_2 \norm{\left(1 - (v^\prime)^2 \right)_{M}}_{\G^{\lambda,\sigma}} \norm{\sqrt{-\Delta_L}A (f_k)_{\sim M}}_2 \nonumber \\ 
& \lesssim \epsilon \nu \norm{\sqrt{-\Delta_L}A f}^2_2,  \label{ineq:EneqRv}
\end{align} 
which can then be absorbed by the leading dissipation term in \eqref{eq:HiNrmDiss} for $\epsilon$ sufficiently small.  
This completes the treatment of $E^{\neq;v}_{\R}$ and of the entire non-zero frequency error term $E^{\neq}$. 

\subsection{Dissipation error term: zero frequencies} \label{sec:ZeroModeDiss}
For treating $E^0$ in \eqref{eq:HiNrmDiss}, the main challenge is dealing with the contributions of low frequencies, 
where the leading dissipation term in \eqref{eq:HiNrmDiss} will not directly control $E^0$. 
Physically, there is an effective transport of enstrophy due to the motion of the coordinate system inducing a variable dissipation coefficient. 
This low frequency effect is controlled by \eqref{ineq:Boot_hdecay}, which ensures that the coordinate system 
is relaxing sufficiently fast.
Using again the shorthand \eqref{def:Gshort}, begin by decomposing $E^0$ with a \emph{homogeneous} paraproduct: 
\begin{align*} 
E^0 & = \nu \sum_{M \in 2^\Integer} \int A f_0 A\left( G(t,v)_{M} \partial_{vv}(f_0)_{< M/8}\right) dv + \nu \sum_{M \in 2^\Integer} \int A f_0 A\left( G(t,v)_{<M/8} \partial_{vv}(f_0)_{M}\right) dv \\  & \quad  + \nu \sum_{M \in 2^\Integer} \sum_{M/8 \leq M^\prime \leq 8M} \int A f_0 A\left( G(t,v)_{M^\prime} \partial_{vv}(f_0)_{M}\right) dv \\ 
& = E^0_{HL} + E^0_{LH} + E^0_{\mathcal{R}}. 
\end{align*} 
Consider 
\begin{align*} 
\abs{E^0_{HL}} & \lesssim \nu \sum_{M \in 2^\Integer} \int \abs{A \hat{f_0}(\eta)_{\sim M} A_0(\eta) \widehat{G}(\eta-\xi)_{M}  \xi^2 \hat{f_0}(\xi)_{< M/8} } d\eta.  
\end{align*}
On the support of the integrand, 
\begin{subequations} \label{ineq:E0HLFreqLoc}
\begin{align} 
\abs{\abs{\eta} - \abs{\eta-\xi}} & \leq \abs{\xi} \leq \frac{3}{16}\abs{\eta-\xi}, \\
\frac{13}{16}\abs{\eta-\xi} &\leq \abs{\eta} \leq \frac{19}{16}\abs{\eta-\xi},  
\end{align} 
\end{subequations}
which implies that by \eqref{lem:scon} and \eqref{ineq:BasicJswap} followed by \eqref{ineq:IncExp} we have, 
\begin{align*} 
\abs{E^0_{HL}} & \lesssim \nu \sum_{M \in 2^\Integer} \int \abs{A \hat{f_0}(\eta)_{\sim M} A_0(\eta-\xi) \widehat{G}(\eta-\xi)_{M} \xi^2 \hat{f_0}(\xi)_{< M/8} e^{\lambda \abs{\xi}^s}} d\eta.  
\end{align*} 
Due to \eqref{ineq:E0HLFreqLoc}, on the support of the integrand there always holds $\abs{\xi} \lesssim \abs{\eta}$ and therefore by \eqref{ineq:L2L2L1}, Cauchy-Schwarz in $M$ and \eqref{ineq:vp2m1bd} (also almost orthogonality \eqref{ineq:GeneralOrtho} and $\sigma > 1$)
\begin{align} 
\abs{E^0_{HL}} & \lesssim \nu \sum_{M \in 2^\Integer} \int \abs{\eta A \hat{f_0}(\eta)_{\sim M} A_0(\eta-\xi) \widehat{G}(\eta-\xi)_{M}\xi \hat{f_0}(\xi)_{< M/8} e^{\lambda \abs{\xi}^s}} d\eta \nonumber \\ 
& \lesssim  \nu\sum_{M \in 2^\Integer} \norm{\partial_v A (f_0)_{\sim M}}_2\norm{\partial_v (f_0)_{<M/8}}_{\G^{\lambda,\sigma}} \norm{A\left(1 - (v^\prime)^2\right)_M}_2 \nonumber \\ 
& \lesssim  \nu \epsilon \norm{\partial_v A f_0}_2^2, \label{ineq:E0HL}
\end{align} 
which is then absorbed by the dissipation term in \eqref{eq:HiNrmDiss} for $\epsilon$ sufficiently small.   

The $E^0_{LH}$ term is treated similarly. 
The analogue of \eqref{ineq:E0HLFreqLoc} (with the role of $\xi$ and $\eta-\xi$ swapped) holds on the support of the 
integrand and hence from \eqref{lem:scon} and \eqref{ineq:BasicJswap} followed by \eqref{ineq:IncExp} ($s > 1/2$), we have, 
\begin{align*} 
\abs{E^0_{LH}} & \lesssim \nu \sum_{M \in 2^\Integer} \int \abs{A \hat{f_0}(\eta)_{\sim M} e^{\lambda\abs{\eta-\xi}^s} \widehat{G}(\eta-\xi)_{<M/8} \xi^2 A_0(\xi) \hat{f_0}(\xi)_{M} } d\eta \\ 
& \lesssim \nu \sum_{M \in 2^\Integer} \int \abs{\eta A \hat{f_0}(\eta)_{\sim M} e^{\lambda\abs{\eta-\xi}^s} \widehat{G}(\eta-\xi)_{<M/8} \xi A_0(\xi) \hat{f_0}(\xi)_{M} } d\eta. 
\end{align*}
Then by \eqref{ineq:L2L2L1}, almost orthogonality \eqref{ineq:GeneralOrtho}, Cauchy-Schwarz in $M$, $\sigma > 1$ and \eqref{ineq:vp2m1bd} we get,
\begin{align} 
\abs{E^0_{LH}} & \lesssim \nu\sum_{M \in 2^\Integer} \norm{\partial_v A (f_0)_{\sim M}}_2\norm{\partial_v A (f_0)_M}_2 \norm{\left(1 - (v^\prime)^2\right)_{<M/8}}_{\G^{\lambda,\sigma}} \nonumber \\ 
& \lesssim  \nu \epsilon \norm{\partial_v A f_0}_2^2, \label{ineq:E0LH}
\end{align} 
which is again absorbed by the dissipation term in \eqref{eq:HiNrmDiss} for $\epsilon$ sufficiently small.   

Finally, consider the remainder term and divide into separate cases based on the output frequency:  
\begin{align*} 
E^0_{\mathcal{R}} & = \nu \sum_{M \in 2^\Integer} \sum_{M/8 \leq M^\prime \leq 8M} \int \left(A f_0 \right)_{\leq 1} A\left(G_{M} (\partial_{vv}f_0)_{M^\prime} \right) dv  \\ 
& \quad + \sum_{M \in 2^\Integer} \sum_{M/8 \leq M^\prime \leq 8M} \int \left(A f_0 \right)_{> 1} A\left( G_{M} (\partial_{vv}f_0)_{M^\prime} \right) dv  \\ 
& = E^{0;L}_{\mathcal{R}} +  E_{\mathcal{R}}^{0;H}.   
\end{align*} 
The $E_{\mathcal{R}}^{0;H}$ term is relatively straightforward. Indeed, on the support of the integrand we may apply \eqref{lem:strivial} to deduce for some $c \in (0,1)$ that, 
\begin{align*} 
\abs{E_{\mathcal{R}}^{0;H}} & \lesssim \sum_{M \in 2^\Integer} \sum_{M/8 \leq M^\prime \leq 8M} \int_{\eta,\xi} \mathbf{1}_{\abs{\eta} > 1}\abs{A \widehat{f_0}(\eta) J_0(\eta)e^{\lambda\abs{\eta}^s} \jap{\eta-\xi}^{\sigma/2} \widehat{G}(\eta-\xi)_M  \jap{\xi}^{\sigma/2+1}  \widehat{\partial_v f_0}(\xi)_{M^\prime}} d\xi d\eta \\ 
& \lesssim \sum_{M \in 2^\Integer} \sum_{M/8 \leq M^\prime \leq 8M} \int_{\eta,\xi}  \mathbf{1}_{\abs{\eta} > 1}\abs{A \widehat{f_0}(\eta) J_0(\eta)e^{c \lambda\abs{\eta-\xi}^s} \jap{\eta-\xi}^{\sigma/2} \widehat{G}(\eta-\xi)_M} \\ & \quad\quad \times  \abs{\jap{\xi}^{\sigma/2+1} e^{c \lambda\abs{\xi}^s} \widehat{\partial_v f_0}(\xi)_{M^\prime}} d\xi d\eta. 
\end{align*} 
By Lemma \ref{basic} followed by \eqref{ineq:IncExp} and \eqref{ineq:SobExp} we have
\begin{align*} 
\abs{E_{\mathcal{R}}^{0;H}} & \lesssim \nu\sum_{M \in 2^\Integer} \sum_{M/8 \leq M^\prime \leq 8M} \int_{\eta,\xi}  \mathbf{1}_{\abs{\eta} > 1}\abs{A \widehat{f_0}(\eta) e^{\lambda\abs{\eta-\xi}^s} \widehat{G}(\eta-\xi)_M e^{\lambda\abs{\xi}^s} \widehat{\partial_v f_0}(\xi)_{M^\prime}} d\xi d\eta \\ 
& \lesssim \nu\sum_{M \in 2^\Integer} \sum_{M/8 \leq M^\prime \leq 8M} \int_{\eta,\xi}  \mathbf{1}_{\abs{\eta} > 1}\abs{\eta A \widehat{f_0}(\eta) e^{\lambda\abs{\eta-\xi}^s} \widehat{G}(\eta-\xi)_M e^{\lambda\abs{\xi}^s} \widehat{\partial_v f_0}(\xi)_{M^\prime}} d\xi d\eta, 
\end{align*}  
where in the last line we used $\abs{\eta} > 1$ on the support of the integrand. 
Therefore, by \eqref{ineq:L2L2L1} followed by Cauchy-Schwarz in $M$ and \eqref{ineq:coefbds} (recall \eqref{def:Gshort}), 
\begin{align} 
\abs{E_{\mathcal{R}}^{0;H}} & \lesssim \nu\sum_{M \in 2^\Integer} \norm{\partial_v A f_0}_2 \norm{\left(1-(v^\prime)^2\right)_M}_{\G^{\lambda,2}}\norm{\partial_v A (f_0)_{\sim M}}_{\G^{\lambda,0}} \nonumber \\ 
& \lesssim \epsilon \nu \norm{\partial_v Af_0}_2^2, \label{ineq:E0RH}
\end{align}
which is then absorbed by the dissipation in \eqref{eq:HiNrmDiss} for $\epsilon$ sufficiently small.  

Next we treat $E_{\R}^{0;L}$, which requires more care than $E_\R^{0;H}$. 
 By Cauchy-Schwarz and Bernstein's inequalities, 
\begin{align*} 
\abs{E_{\mathcal{R}}^{0;L}} & \leq \nu \sum_{M \in 2^\Integers} \sum_{M^\prime \approx M} \norm{P_{\leq 1}A f_0}_{2}\norm{P_{\leq 1} A\left(G_{M^\prime} (\partial_{vv}f_0)_{M}\right)}_{2} \\ 
 & \lesssim \nu \sum_{M \in 2^\Integers} \sum_{M^\prime \approx M} \norm{f_0}_{2}\norm{G_{M^\prime} (\partial_{vv}f_0)_{M}}_{2} \\ 
 & \lesssim \nu \sum_{M \in 2^\Integers} \sum_{M^\prime \approx M} \norm{f_0}_{2} \norm{\left(1-(v^\prime)^2\right)_{M^\prime}}_{\infty}\norm{(\partial_{vv}f_0)_{M}}_{2} \\ 
 & \lesssim \nu \sum_{M \in 2^\Integers} \norm{f_0}_{2} M^{3/2}  \norm{\left(1-(v^\prime)^2\right)_{\sim M}}_2 \norm{(\partial_{v}f_0)_{M}}_{2} \\
 & \lesssim \nu \sum_{M \in 2^\Integers : M \leq 1} \norm{f_0}_{2} M^{1/2}  \norm{\partial_v\left(1-(v^\prime)^2\right)_{\sim M}}_{2} \norm{(\partial_{v}f_0)_{M}}_{2} 
\\ & \quad + \nu \sum_{M \in 2^\Integers: M > 1} \norm{f_0}_{2} M^{1/2}  \norm{\partial_v\left(1-(v^\prime)^2\right)_{\sim M}}_{2} \norm{(\partial_{v}f_0)_{M}}_{2}. 
\end{align*}
The first (low frequency) term is summed by Cauchy-Schwarz in $M$ 
whereas the second term is summed by paying additional derivatives on the last factor to reduce the power of $M$. 
Therefore by \eqref{ineq:vppdecay} and \eqref{ineq:Boot_energyHi}, 
\begin{align} 
\abs{E_{\mathcal{R}}^{0;L}} & \lesssim \nu\norm{f_0}_2\norm{\partial_v\left(1 - (v^\prime)^2\right)}_2 \norm{\partial_v f_0}_2 + \nu\norm{f_0}_2\norm{\partial_v\left(1 - (v^\prime)^2\right)}_2 \norm{\partial_v A f_0}_2 \nonumber \\ 
& \lesssim \nu\norm{f_0}_2 \norm{\partial_v A f_0}_2^2 + \nu\norm{f_0}_2 \norm{\partial_v\left(1 - (v^\prime)^2\right)}_2^2 \nonumber \\ 
& \lesssim \epsilon \nu \norm{\partial_v A f_0}_2^2 + \epsilon^3 \frac{\nu}{\jap{\nu t}^{3/2}}. \label{ineq:ER0L}
\end{align}
For $\epsilon$ small, the first term is absorbed by the dissipation term in \eqref{eq:HiNrmDiss} while the
latter is integrable in time uniformly in $\nu$ and cubic in $\epsilon$.  

Putting together all the contributions from \eqref{eq:AfEvo}: the Euler nonlinear bound \eqref{ineq:EulerHiNrm}, the dissipation terms \eqref{eq:HiNrmDiss}, the non-zero frequency dissipation error terms \eqref{ineq:EneqLH}, \eqref{ineq:EneqHLz}, \eqref{ineq:T1Hard}, \eqref{ineq:EHLneqAst}, \eqref{ineq:EneqRz} and \eqref{ineq:EneqRv} with the zero frequency dissipation error terms \eqref{ineq:E0LH}, \eqref{ineq:E0HL}, \eqref{ineq:E0RH} and \eqref{ineq:ER0L} and integrating in time gives the bound \eqref{ineq:energyHi} for $\epsilon$ chosen sufficiently small.

\section{Enhanced dissipation estimate \eqref{ineq:energyLo}} \label{sec:ED}
Up to an adjustment of the constants in the bootstrap argument, it suffices to consider only $t$ such that $\nu t^3 \geq 1$ (say), as otherwise the decay estimate \eqref{ineq:energyLo} follows trivially from the inviscid energy estimate \eqref{ineq:energyHi}. 

Computing the time evolution of $\norm{A^\nu f}_2$ from \eqref{def:vortNice}, 
\begin{align} 
\frac{1}{2}\frac{d}{dt} \norm{A^\nu f}_2^2 & = -CK_\lambda^\nu + \alpha\int A^\nu f e^{\lambda(t)\abs{\grad}^s}\jap{\grad}^\beta\jap{D(t,\partial_v)}^{\alpha-1} \frac{D(t,\partial_v)}{\jap{D(t,\partial_v)}} \partial_t D(t,\partial_v) P_{\neq 0}f dv dz   \nonumber \\ 
& \quad - \int A^\nu f A^\nu \left( u\cdot \grad f\right) dv dz + \nu\int A^\nu f A^{\nu} \left(\tilde{\Delta_t} f\right) dv dz \nonumber \\ 
& \leq -CK_\lambda^\nu + \frac{1}{8}\nu t^2 \norm{\mathbf{1}_{t \geq 2 \abs{\partial_v}} A^\nu P_{\neq 0} f}_2^2 \nonumber    \\ 
& \quad - \int A^\nu f A^\nu \left( u\cdot \grad f\right) dv dz + \nu\int A^\nu f A^{\nu} \left(\tilde{\Delta_t} f\right) dv dz. \label{ineq:AnuEvo} 
\end{align} 
As in \eqref{eq:HiNrmDiss} above, we write the dissipation term as a perturbation of $\Delta_L$, 
\begin{align} 
\nu \int A^\nu f A^{\nu} \Delta_t f dv dz & = -\nu\norm{\sqrt{-\Delta_L}A^\nu f}_2^2 - \sum_{k \neq 0}\int A^\nu f_k A^\nu\left( (1-(v^\prime)^2) (\partial_{v} - t\partial_z)^2 f_k \right) dv dz \nonumber \\
& = -\nu\norm{\sqrt{-\Delta_L}A^\nu f}_2^2 + E^{\nu}. \label{ineq:LoNrmDiss}
\end{align} 
First, we need to cancel the growing term in \eqref{ineq:AnuEvo} that involves $\nu t^2$ with part of the leading order dissipation in \eqref{ineq:LoNrmDiss}.   
Indeed, we have
\begin{align*}
\frac{1}{8} \nu t^2 \norm{\mathbf{1}_{t \geq 2 \abs{\partial_v}}A^\nu P_{\neq 0}f}_2^2-\nu\norm{\sqrt{-\Delta_L}A^\nu P_{\neq 0} f}_2^2 & \\
& \hspace{-6cm}  = \nu \sum_{k \neq 0}\int \left(\frac{1}{8}t^2\mathbf{1}_{t \geq 2 \abs{\eta}} - \abs{k}^2 - \abs{\eta-kt}^2\right)   \abs{A^\nu \hat{f}_k(\eta)}^2 d\eta \\ 
& \hspace{-6cm} \leq -\frac{\nu}{8}\norm{\sqrt{-\Delta_L}A^\nu P_{\neq 0} f}_2^2, 
\end{align*} 
which implies from \eqref{ineq:AnuEvo},  
\begin{align} 
\frac{1}{2}\frac{d}{dt} \norm{A^\nu f}_2^2 & \leq -CK_\lambda^\nu - \int A^\nu f A^\nu \left( u\cdot \grad f\right) dv dz -\frac{\nu}{8}\norm{\sqrt{-\Delta_L}A^\nu f}_2^2 + E^{\nu}. \label{ineq:EDdiss} 
\end{align} 
There are two challenges here: the Euler nonlinearity (the second term) and the error from the dissipation (the final term).

\subsection{Euler nonlinearity}
We first divide into zero and non-zero frequency contributions, as they will be treated differently: 
\begin{align*} 
-\int A^\nu f A^\nu \left( u\cdot \grad f\right) dv dz & = -\int A^\nu f A^\nu\left(g \partial_v f\right) dv dz - \int A^\nu f A^\nu\left(v^\prime \grad^{\perp}P_{\neq 0} \phi \cdot \grad f\right) dv dz \\ 
& = \mathcal{E}_1 + \mathcal{E}_2.
\end{align*} 
The reason for this is the large disparity that $A^\nu$ imposes between the zero-in-$z$ mode and the non-zero-in-$z$ modes, 
which are mixed in $\mathcal{E}_2$ due to the $z$ dependence of the velocity field $\grad^\perp P_{\neq 0} \phi$. \
Indeed, the commutator trick that is used to recover part of the derivative in the treatment of the Euler nonlinearity in \cite{LevermoreOliver97,BM13} normally requires that the norm not vary drastically in $k$ (the discrete, $z$ wavenumber).
During the proof of \eqref{ineq:EulerHiNrm} in \S5 of \cite{BM13}, the disparity introduced by $J$ is recovered using that it only occurs near the critical times, and hence decay of the velocity field can be transferred back to regularity. 
This will not work here; instead we will make use of the high norm estimate \eqref{ineq:energyHi} to recover the derivative and take advantage of the rapid decay of the velocity field given by \eqref{ineq:LossyEllip2}. 

First consider the estimation of $\mathcal{E}_1$, which as various techniques in e.g. \cite{LevermoreOliver97,KukavicaVicol09,BM13}, begins with the commutator trick (note that $P_{= 0}g = g$ and $A^\nu P_{\neq 0} = A^{\nu}$): 
\begin{align} 
 - \int A^\nu f_k A^\nu \left( g\partial_v f \right) dv dz  = \frac{1}{2} \int \partial_v g \abs{A^\nu f}^2 dv dz + \int A^\nu f \left[g \partial_v A^\nu f - A^\nu \left( g \partial_v f\right) \right]dv dz. \label{eq:commg}
\end{align}
The first term in \eqref{eq:commg} is controlled by Sobolev embedding and the decay from \eqref{ineq:Boot_gc},
\begin{align} 
\frac{1}{2}\int \partial_v g \abs{A^\nu f}^2 dv dz \lesssim \norm{g}_{\infty} \norm{A^\nu f}_2^2 \lesssim  \frac{\epsilon}{\jap{t}^{2-K_D\epsilon/2}} \norm{A^\nu f}_2^2. \label{ineq:AnuLowerComm}
\end{align}
The latter term in \eqref{eq:commg} is expanded with a paraproduct (in both $z$ and $v$): 
\begin{align} 
\int A^\nu f \left[g \partial_v A^\nu f - A^\nu \left( g \partial_v f\right) \right]dv dz & = \sum_{N \geq 8}\int A^\nu f \left[g_{<N/8} \partial_v A^\nu f_N - A^\nu \left( g_{<N/8} \partial_v f_{N}\right) \right]dv dz  \nonumber \\ 
& \quad + \sum_{N \geq 8}\int A^\nu f \left[g_{N} \partial_v A^\nu f_{<N/8} - A^\nu \left( g_{N} \partial_v f_{<N/8}\right) \right]dv dz \nonumber  \\ 
& \quad + \sum_{N \in \mathbb D}\sum_{N/8 \leq N^\prime \leq 8N} \int A^\nu f \left[g_{N^\prime} \partial_v A^\nu f_{N} - A^\nu \left( g_{N^\prime} \partial_v f_{N}\right) \right]dv dz \nonumber \\ 
& =  \sum_{N \geq 8} T^0_N + \sum_{N \geq 8} R^0_N + \R^0.  \label{eq:Anu0EulerPara}
\end{align}
On the Fourier side, 
\begin{align*} 
T_N^0 & = -\frac{i}{2\pi}\sum_{k \neq 0}\int_{\eta,\xi}A^\nu \overline{\hat{f}}_k(\eta)\left[A^\nu_k(\eta) - A^\nu_k(\xi)\right]\widehat{g}(\eta-\xi)_{<N/8} \xi \hat{f}_k(\xi)_N d\eta d\xi, 
\end{align*}
and on the support of the integrand there holds 
\begin{subequations} \label{ineq:TN0}
\begin{align} 
\abs{\abs{k,\eta} - \abs{k,\xi}} & \leq \abs{\eta-\xi} \leq \frac{3}{16}\abs{k,\xi}, \\
\frac{13}{16}\abs{k,\xi} &\leq \abs{k,\eta} \leq \frac{19}{16}\abs{k,\xi}. 
\end{align} 
\end{subequations}
For the commutator, we write, 
\begin{align} 
A^\nu_k(\eta) - A^\nu_k(\xi) & = \jap{D(t,\eta)}^\alpha\left[e^{\lambda\abs{k,\eta}^s}\jap{k,\eta}^{\beta} - e^{\lambda\abs{k,\xi}^s}\jap{k,\xi}^{\beta}\right] \nonumber \\ & \quad + e^{\lambda\abs{k,\xi}^s}\jap{k,\xi}^{\beta}\left[\jap{D(t,\eta)}^\alpha - \jap{D(t,\xi)}^\alpha\right]. \label{eq:AnuComm}
\end{align}
Then, 
\begin{align*} 
T^0_N & = -i\sum_{k \neq 0}\int_{\eta,\xi}A^\nu \overline{\hat{f}}_k(\eta)\jap{D(\eta)}^{\alpha}\left[e^{\lambda\abs{k,\eta}^s}\jap{k,\eta}^{\beta} - e^{\lambda\abs{k,\xi}^s}\jap{k,\xi}^{\beta}\right]\widehat{g}(\eta-\xi)_{<N/8} \xi\hat{f}_k(\xi)_N d\eta d\xi \\ 
& \quad - i\sum_{k \neq 0}\int_{\eta,\xi}A^\nu \overline{\hat{f}}_k(\eta) e^{\lambda\abs{k,\xi}^s}\jap{k,\xi}^{\beta}\left[\jap{D(t,\eta)}^\alpha - \jap{D(t,\xi)}^\alpha\right]\widehat{g}(\eta-\xi)_{<N/8} \xi\hat{f}_k(\xi)_N d\eta d\xi \\ 
& = T_N^{0;1} + T_N^{0;2}. 
\end{align*} 
Consider the more standard $T_N^{0;1}$ first. 
We claim that by \eqref{ineq:TN0}, on the support of the integrand there holds, for some $c \in (0,1)$ (depending only on $s$ and our Littlewood-Paley conventions),
\begin{align} 
 \abs{e^{\lambda\abs{k,\eta}^s}\jap{k,\eta}^{\beta} - e^{\lambda\abs{k,\xi}^s}\jap{k,\xi}^{\beta}} \lesssim \frac{\abs{\eta-\xi}}{\jap{\xi}^{1-s}} e^{c\lambda\abs{\eta-\xi}^s} e^{\lambda\abs{k,\xi}^s}\jap{k,\xi}^{\beta}; \label{ineq:transtriv}
\end{align} 
see e.g. \S5 of \cite{BM13} or \cite{LevermoreOliver97} combined with \eqref{lem:scon}. 
Applying \eqref{ineq:transtriv}, \eqref{ineq:TN0} and \eqref{ineq:DRat} implies
\begin{align*}
\abs{T_N^{0;1}} &  \lesssim \sum_{k \neq 0}\int_{\eta,\xi}\abs{A^\nu \hat{f}_k(\eta) \jap{\eta-\xi}^{3\alpha + 1}e^{c\lambda\abs{\eta-\xi}^s}\widehat{g}(\eta-\xi)_{<N/8} \frac{\xi}{\jap{\xi}^{1-s}} A^\nu \hat{f}_k(\xi)_N} d\eta d\xi \\ 
& \lesssim \sum_{k \neq 0}\int_{\eta,\xi}\abs{ \abs{k,\eta}^{s/2} A^\nu \hat{f}_k(\eta) \jap{\eta-\xi}^{3\alpha + 1}e^{c\lambda\abs{\eta-\xi}^s}\widehat{g}(\eta-\xi)_{<N/8} \abs{k,\xi}^{s/2} A^\nu \hat{f}_k(\xi)_N} d\eta d\xi. 
\end{align*}  
Therefore, by $\sigma > 3\alpha + 8$ and \eqref{ineq:L2L2L1} followed by \eqref{ineq:Boot_gc}, we get 
\begin{align} 
T_N^{0;1} & \lesssim \norm{g_{<N/8}}_{\G^{\lambda(t),\sigma-6}} \norm{ \abs{\grad}^{s/2} A^\nu f_{\sim N}}_2^2 \lesssim \frac{\epsilon}{\jap{t}^{2-K_D\epsilon}}\norm{ \abs{\grad}^{s/2} A^\nu f_{\sim N}}_2^2. \label{ineq:TN01}
\end{align} 
After summing in $N$ and choosing $\epsilon$ small, this term is absorbed by $CK_\lambda^\nu$. 

For $T_N^{0;2}$ we crucially use that \eqref{ineq:Dcomm} applies; this is due to the fact that $g$ does not depend on $z$ and will not work to treat $\mathcal{E}_2$. Indeed, \eqref{ineq:Dcomm} implies
\begin{align*} 
T_N^{0;2} & \lesssim \sum_{k \neq 0}\int_{\eta,\xi} \abs{A^\nu \hat{f}_k(\eta) e^{\lambda\abs{k,\xi}^s}\jap{k,\xi}^{\beta}\jap{\eta-\xi}^{3\alpha} \widehat{g}(\eta-\xi)_{<N/8}\jap{D(t,\xi)}^{\alpha} \hat{f}_k(\xi)_N} d\eta d\xi.
\end{align*} 
By $\sigma > 3\alpha+8$ and \eqref{ineq:L2L2L1} followed by \eqref{ineq:Boot_gc} implies 
\begin{align} 
T_N^{0;2} & \lesssim \norm{g}_{\G^{\lambda(t),\sigma-6}}\norm{A^\nu f_{\sim N}}_2^2  \lesssim \frac{\epsilon}{\jap{t}^{2-K_D\epsilon}}\norm{A^\nu f_{\sim N}}_2^2 , \label{ineq:TN02}
\end{align} 
which is an integrable contribution. 

The `reaction' term $R_N^0$ is dealt with easily using Lemma \ref{lem:AnuProd} and the bootstrap controls on the higher norms.
First note that $A^\nu(g \partial_v f) =A^\nu(g \partial_v P_{\neq 0}f)$. 
Then, by Cauchy-Schwarz, \eqref{ineq:Ddistri}, $\sigma \geq \beta + 3\alpha + 7$, Cauchy-Schwarz in $N$ and \eqref{ineq:Boot_gc},
\begin{align}
\abs{\sum_{N \geq 8}\int A^\nu f A^\nu\left(g_{N} \partial_v P_{\neq 0} f_{<N/8}\right) dv dz} & \leq \sum_{N \geq 8} \norm{A^\nu f_{\sim N}}_2 \norm{A^\nu\left(g_{N} \partial_vP_{\neq 0}f_{<N/8}\right)}_2 \nonumber \\ 
& \lesssim \sum_{N \geq 8} \norm{A^\nu f_{\sim N}}_2 \norm{g_{N}}_{\G^{\lambda,\beta+ 3\alpha}} \norm{A^\nu \partial_vP_{\neq 0} f_{<N/8}}_2 \nonumber \\ 
& \lesssim \sum_{N \geq 8} \norm{A^\nu f_{\sim N}}_2 N \norm{g_{N}}_{\G^{\lambda,\beta+ 3\alpha}} \norm{A^\nu P_{\neq 0} f_{<N/8}}_2 \nonumber \\ 
& \lesssim \sum_{N \geq 8} \norm{A^\nu f_{\sim N}}_2 \norm{g_{N}}_{\G^{\lambda,\beta+ 3\alpha+1}} \norm{A^\nu P_{\neq 0} f_{<N/8}}_2 \nonumber \\ 
& \lesssim \norm{g}_{\G^{\lambda,\sigma-6}} \norm{A^\nu f}^2_2 \nonumber \\ 
& \lesssim \frac{\epsilon}{\jap{t}^{2-K_D\epsilon/2}} \norm{A^\nu f}_2^2, \label{ineq:RNneq_a}
\end{align}
which is an integrable contribution. 
Similarly, by Bernstein's inequalities, Cauchy-Schwarz in $N$ and \eqref{ineq:Boot_gc} we have
\begin{align}
\abs{\sum_{N \geq 8}\int A^\nu f g_{N} \partial_v A^\nu P_{\neq 0} f_{<N/8} dv dz} & \leq \sum_{N \geq 8} \norm{A^\nu f_{\sim N}}_2 \norm{g_N}_{\infty} \norm{\partial_v A^\nu P_{\neq 0} f_{<N/8}}_2 \nonumber \\ 
& \lesssim \sum_{N \geq 8} \norm{A^\nu f_{\sim N}}_2 N^{3/2} \norm{g_N}_{2} \norm{A^\nu f_{<N/8}}_2 \nonumber \\ 
& \lesssim \norm{g}_{H^{3/2}}\norm{A^\nu f}_2^2 \nonumber \\ 
& \lesssim \frac{\epsilon}{\jap{t}^{2-K_D\epsilon/2}} \norm{A^\nu f}_2^2, \label{ineq:RNneq_b}
\end{align}
which again is integrable.
The remainder terms $\R^0$ in \eqref{eq:Anu0EulerPara} are treated very similar to the reaction terms $R_N^0$ just completed; 
hence we omit the details and simply conclude 
\begin{align} 
\abs{\R^0} & \lesssim \frac{\epsilon}{\jap{t}^{2-K_D\epsilon/2}} \norm{A^\nu f}_2^2. \label{ineq:AnuRemg}
\end{align}

Next turn to $\mathcal{E}_2$.
If the zero mode in $z$ interacts with non-zero modes, possible now due to the $z$ dependence of the velocity field, then we can no longer apply \eqref{ineq:Dcomm} to gain regularity. 
Physically, the issue is that gradients in the large zero frequency can be converted to large gradients in non-zero frequencies by transport.
By Cauchy-Schwarz and two applications of \eqref{ineq:Ddistri}, 
\begin{align*} 
\mathcal{E}_2 & \leq \norm{A^\nu f}_2 \norm{A^\nu \left( v^\prime \grad^\perp P_{\neq 0} \phi\right)}_2 \norm{\grad f}_{\G^{\lambda,\beta + 3\alpha}} \\ 
& \lesssim \norm{A^\nu f}_2 \norm{f}_{\G^{\lambda,\beta + 3\alpha+1}}\left(\norm{A^\nu \grad^\perp P_{\neq 0} \phi}_2 + \norm{A^\nu\left( (v^\prime - 1) \grad^\perp P_{\neq 0} \phi \right) }_2 \right) \\ 
& \lesssim \norm{A^\nu f}_2 \norm{f}_{\G^{\lambda,\beta + 3\alpha+1}} \left(1 + \norm{v^\prime - 1}_{\G^{\lambda,\beta + 3\alpha}}\right)\norm{A^\nu \grad^\perp P_{\neq 0} \phi}_2.   
\end{align*} 
Then by $\sigma \geq \beta + 3\alpha + 1$, Lemma \ref{lem:LossyEllip2}, \eqref{ineq:Boot_energyHi}, \eqref{ineq:Boot_energyLo} and \eqref{ineq:Boot_ARh}, 
\begin{align} 
\mathcal{E}_2 & \lesssim \norm{A^\nu f}_2\norm{Af}_2 \left(1 + \norm{A(v^\prime - 1)}_2\right)\frac{\norm{Af}_2 + \norm{A^\nu f}_2}{\jap{t}^2}  \lesssim \frac{\epsilon^3}{\jap{t}^2}, \label{ineq:E2}
\end{align} 
an integrable, cubic in $\epsilon$ contribution and hence this completes the treatment of $\mathcal{E}_2$ 
and of the entire Euler nonlinearity. 

\subsection{Dissipation error term} \label{sec:NZModeDissLo}
Since at high frequencies $A^\nu$ defines a weaker norm than $A$, it will be possible to apply \eqref{ineq:triTriv} 
and the stronger control coming from \eqref{ineq:Boot_ARh}. 
Indeed, writing $E^{\nu}$ on the Fourier side and applying \eqref{ineq:triTriv} implies
\begin{align*}
\abs{E^{\nu}} & \lesssim \nu\sum_{k \neq 0}\int_{\eta,\xi} \abs{A^\nu \widehat{f}_{k}(\eta) A^\nu_k(\eta) \widehat{G}(\eta-\xi) \abs{\xi-tk}^2 \hat{f}_k(\xi)} d\eta d\xi \\ 
& \lesssim \nu\sum_{k \neq 0}\int_{\eta,\xi} \abs{\sqrt{-\Delta_L} A^\nu \widehat{f}_{k}(\eta) A^\nu_k(\eta) \jap{\eta-\xi} \widehat{G}(\eta-\xi) \abs{\xi-tk} \hat{f}_k(\xi)} d\eta d\xi. 
\end{align*}
By \eqref{lem:smoretrivial}, \eqref{ineq:DRat} and $\jap{\eta}^\beta \lesssim \jap{\eta-\xi}^\beta \jap{\xi}^\beta$ we have
\begin{align*} 
\abs{E^{\nu}} & \lesssim \nu\sum_{k \neq 0}\int_{\eta,\xi} \abs{\sqrt{-\Delta_L} A^\nu \widehat{f}_{k}(\eta) \jap{\eta-\xi}^{\beta+3\alpha+1} e^{\lambda\abs{\eta-\xi}^s} \widehat{G}(\eta-\xi) \abs{\xi-tk} A^\nu \hat{f}_k(\xi)} d\eta d\xi. 
\end{align*}
Therefore, by \eqref{ineq:L2L2L1}, $\sigma > \beta + 3\alpha + 2$ and $k \neq 0$ followed by \eqref{ineq:coefbds} we get
\begin{align} 
\abs{E^{\nu}} & \lesssim \nu\sum_{k \neq 0} \norm{1 - (v^\prime)^2}_{\G^{\lambda,\sigma}} \norm{\sqrt{-\Delta_L}A^\nu f_k}_2^2 \nonumber \\ 
& \lesssim \epsilon \nu \norm{\sqrt{-\Delta_L}A^\nu f}_2^2, \label{ineq:EnuNeq}
\end{align} 
which is absorbed by the leading order dissipation in \eqref{ineq:EDdiss} for $\epsilon$ small. 
 
Putting together the estimates on \eqref{ineq:EDdiss} from the Euler nonlinearity (\eqref{ineq:AnuLowerComm}, \eqref{ineq:TN01}, \eqref{ineq:TN02}, \eqref{ineq:RNneq_a}, \eqref{ineq:RNneq_b}, \eqref{ineq:AnuRemg} and \eqref{ineq:E2}) and from the dissipation error terms \eqref{ineq:EnuNeq}, we deduce \eqref{ineq:energyLo} after integrating in time and choosing $\epsilon$ sufficiently small.  

\section{Coordinate system higher regularity controls}\label{sec:CoordHiReg}
In \S8 of \cite{BM13}, there are three main estimates on the coordinate system that need to be made, corresponding here to \eqref{ineq:bhc}, \eqref{ineq:gc} and \eqref{ineq:ARh}. 
That \eqref{ineq:CK1} can be deduced from the proof of \eqref{ineq:bhc}, \eqref{ineq:gc} and \eqref{ineq:ARh} is shown in 
\S8 of \cite{BM13} and is not repeated here as there is little difference.   

The main new issue for the Navier-Stokes case is to confirm that the variable coefficients in $\tilde{\Delta_t}$ do not slow down or otherwise impede the decay estimates.
This could be possible, for example, if the diffusion coefficient oscillated too much relative to the strength of the damping terms, as gradients in the diffusion coefficient induce momentum transport.    
This effect is controlled by the gradient decay estimate in \eqref{ineq:hdecay}.  

\subsection{Proof of \eqref{ineq:gc}} \label{sec:gc}
Computing from \eqref{def:PDEg} implies 
\begin{align} 
\frac{d}{dt}\left(\jap{t}^{4-K_D\epsilon}\norm{g}_{\G^{\lambda(t),\sigma-6}}^2\right) & = (4-K_D\epsilon)t \jap{t}^{2-K_D\epsilon}\norm{g}_{\G^{\lambda(t),\sigma-6}}^2 \nonumber \\ & \quad\quad + \jap{t}^{4-K_D\epsilon}\frac{d}{dt}\norm{\frac{A}{\jap{\partial_v}^s}g}_{\G^{\lambda(t),\sigma-6}}^2.  \label{ineq:ddtpartt}
\end{align}
Denoting the multiplier $A^S(t,\partial_v) = e^{\lambda(t)\abs{\partial_v}^s}\jap{\partial_v}^{\sigma-6}$ (`S' for `simple'), the latter term gives
\begin{align} 
\jap{t}^{4-K_D\epsilon}\frac{d}{dt}\norm{g}_{\G^{\lambda(t),\sigma-6}}^2 & = 2\jap{t}^{4-K_D\epsilon} \dot\lambda(t)\norm{\abs{\partial_v}^{s/2}g}_{\G^{\lambda(t),\sigma-6}}^2 +  
  2\jap{t}^{4-K_D\epsilon}\int A^Sg A^S \partial_tg dv, \label{ineq:timederivpartialt}
\end{align} 
From \eqref{def:PDEg}, 
\begin{align} 
 2\jap{t}^{4-K_D\epsilon}\int A^S g A^S \partial_tg dv & = -\frac{4\jap{t}^{4-K_D\epsilon}}{t}\norm{g}_{\G^{\lambda(t),\sigma-6}}^2 \nonumber \\ 
& \quad - 2\jap{t}^{4-K_D\epsilon} \int A^S g A^S \left(g \partial_v g\right) dv 
 \nonumber \\ & \quad - \frac{2\jap{t}^{4-K_D\epsilon}}{t}\int A^Sg A^S\left(v^\prime < \grad^\perp P_{\neq 0}\phi\cdot \grad \tilde u > \right) dv \nonumber \\ 
& \quad + 2\nu \jap{t}^{4-K_D\epsilon}\int A^S g A^S\left (\tilde{\Delta_t} g\right) dv \nonumber \\ 
& = V_1 + V_2 + V_3 + V_D. \label{def:V1V2V3}
\end{align}
The first three terms are basically the same as in Euler, and are treated accordingly as in \cite{BM13}. 
Indeed, in \S8 of \cite{BM13} it is shown that the bootstrap hypotheses together with \eqref{ineq:GAlg} and \eqref{eq:barhgRelat}  imply 
\begin{align}
V_2 & \leq \frac{K_D\epsilon}{8} \jap{t}^{3-K_D\epsilon-s}  \norm{g}_{\G^{\lambda,\sigma-6}}^2, \label{ineq:V2}
\end{align} 
where we define $K_D$ to be the maximum of the constant appearing in this term and several other below. 

Treating $V_3$ is not hard due to the regularity gap of $6$ derivatives. 
Note that
\begin{align} 
\grad \tilde u  = 
-\begin{pmatrix} 
v^\prime (\partial_v - t\partial_z)\partial_z\phi \\ 
\partial_vv^\prime (\partial_v - t\partial_z)\phi + v^\prime(\partial_v - t\partial_z)\partial_v \phi 
\end{pmatrix}
, \label{def:gradtu}
\end{align}
and therefore by \eqref{ineq:GAlg}, \eqref{def:gradtu}, Lemma \ref{lem:LossyElliptic} and the bootstrap hypotheses, 
\begin{align} 
\norm{\grad P_{\neq 0} \tilde u(t)}_{\G^{\lambda(t),\sigma-5}} \lesssim \frac{\epsilon}{\jap{t}}. \label{ineq:gradulossy}
\end{align}
It is shown in \S8 of \cite{BM13} that \eqref{ineq:gradulossy}, together with \eqref{ineq:GAlg}, Lemma \ref{lem:LossyElliptic} and the bootstrap hypotheses, implies 
we have for some $C > 0$, 
\begin{align} 
V_3 & \lesssim \jap{t}^{-K_D\epsilon} \epsilon^2 \norm{g}_{\G^{\lambda,\sigma-6}} \leq \frac{K_D\epsilon \jap{t}^{4-K_D\epsilon}}{8t} \norm{g}_{\G^{\lambda,\sigma-6}}^2 + C\epsilon^3 t^{-3 -K_D\epsilon}. \label{ineq:V3}
\end{align}

Focus now on the term that is new for Navier-Stokes, $V_D$. 
As in many other estimates in this work, we write this as two contributions: 
\begin{align}
V_D & = -2 \nu \jap{t}^{4-K_D\epsilon} \norm{\partial_v A^S g}_2^2 -2 \nu \jap{t}^{4-K_D\epsilon} \int A^S g A^S\left(1 - (v^\prime)^2\right)\partial_{vv}g dv \nonumber \\ & = -2 \nu \jap{t}^{4-K_D\epsilon} \norm{\partial_v A^S g}_2^2 - V_{DE}, \label{def:VDE}
\end{align}
a leading order dissipation term and an error term which remains to be controlled. 
We use again the shorthand \eqref{def:Gshort} and decompose the error term with a homogeneous paraproduct 
\begin{align*} 
V_{DE} & = 2\nu \sum_{M \in 2^\Integers}\jap{t}^{4-K_D\epsilon} \int A^S g A^S\left(G_{M} \partial_{vv}g_{<M/8}\right) dv \\ 
& \quad + 2\nu \sum_{M \in 2^\Integers}\jap{t}^{4-K_D\epsilon} \int A^S g A^S\left(G_{<M/8} \partial_{vv}g_{M}\right) dv \\
& \quad + 2\nu \sum_{M \in 2^\Integers}\jap{t}^{4-K_D\epsilon} \sum_{M/8 \leq M^\prime \leq 8M} \int A^S g A^S\left(G_{M^\prime} \partial_{vv}g_{M}\right) dv \\ 
& = V_{DE,HL} + V_{DE,LH} + V_{DE,\mathcal{R}}. 
\end{align*} 
As in the zero frequency dissipation error terms in \S\ref{sec:ZeroModeDiss}, the remainder term is the most delicate due to the contributions of low frequencies. 

First we deal with $V_{DE,LH}$, which on the Fourier side is written as
\begin{align*} 
V_{DE,LH} & \lesssim \nu \sum_{M \in 2^\Integers}\jap{t}^{4-K_D\epsilon} \int_{\eta,\xi} \abs{A^S \hat{g}(\eta) A^S(\eta) \widehat{G}(\eta-\xi)_{<M/8} \abs{\xi}^2 \hat{g}(\xi)_M } d\eta d\xi. 
\end{align*} 
On the support of the integrand, \eqref{ineq:E0HLFreqLoc} holds with the role of $\xi$ and $\eta-\xi$ reversed.
Hence, by \eqref{lem:scon}, for some $c \in (0,1)$:  
\begin{align*} 
V_{DE,LH} & \lesssim \nu \sum_{M \in 2^\Integers}\jap{t}^{4-K_D\epsilon} \int_{\eta,\xi} \abs{\eta A^S \hat{g}(\eta) e^{c\lambda\abs{\eta-\xi}^s}\widehat{G}(\eta-\xi)_{<M/8} \xi A^S(\xi) \hat{g}(\xi)_M } d\eta d\xi. 
\end{align*} 
Therefore by \eqref{ineq:L2L2L1}, $\sigma > 8$ followed by \eqref{ineq:GeneralOrtho} and \eqref{ineq:vp2m1bd} we have
\begin{align} 
V_{DE,LH} & \lesssim \nu \jap{t}^{4-K_D\epsilon} \sum_{M \in 2^\Integers}\norm{\partial_v A^S g_{\sim M}}_2^2 \norm{A^S \left((v^\prime)^2 - 1\right)_{<M/8}}_2 \nonumber \\ 
& \lesssim \epsilon \nu \jap{t}^{4-K_D\epsilon} \norm{\partial_v A^S g}_2^2, \label{ineq:VDELH}
\end{align} 
which is absorbed by the leading order dissipation term in \eqref{def:VDE} for $\epsilon$ small.  

The treatment of $V_{DE,HL}$ is similar to $V_{DE,HL}$. Indeed,  we have
\begin{align*} 
V_{DE,HL} & \lesssim \nu \sum_{M \in 2^\Integers}\jap{t}^{4-K_D\epsilon} \int_{\eta,\xi} \abs{A^S \hat{g}(\eta) A^S(\eta) \hat{G}(\eta-\xi)_{M} \abs{\xi}^2 \hat{g}(\xi)_{<M/8} } d\eta d\xi, 
\end{align*} 
and \eqref{ineq:E0HLFreqLoc} holds on the support of the integrand. 
Therefore, on the support of the integrand there still holds $\abs{\xi} \lesssim \abs{\eta}$, and hence using \eqref{lem:scon} we have for some $c \in (0,1)$, 
\begin{align*} 
V_{DE,HL} & \lesssim \nu \sum_{M \in 2^\Integers}\jap{t}^{4-K_D\epsilon} \int_{\eta,\xi} \abs{\eta A^S \hat{g}(\eta) A^S(\eta-\xi)\widehat{G}(\eta-\xi)_{M} \xi e^{c\lambda\abs{\xi}^s}\hat{g}(\xi)_{<M/8} } d\eta d\xi. 
\end{align*} 
As in \eqref{ineq:VDELH}, it follows from \eqref{ineq:L2L2L1}, $\sigma > 2$, \eqref{ineq:LPOrthoProject} and \eqref{ineq:vp2m1bd} that
\begin{align} 
V_{DE,HL} & \lesssim \epsilon \nu \jap{t}^{4-K_D\epsilon} \norm{\partial_v A^S g}_2^2, \label{ineq:VDEHL}
\end{align}
which is absorbed by the leading order dissipation in \eqref{def:VDE} for $\epsilon$ small. 

Finally turn to the remainder, $V_{DE,\R}$. 
As in \S\ref{sec:ZeroModeDiss}, divide first into low and high frequencies 
\begin{align*} 
V_{DE,\mathcal{R}} & = \nu \jap{t}^{4-K_D\epsilon} \sum_{M \in 2^\Integers} \sum_{M^\prime \approx M} \int (A^S g)_{\leq 1} A^S\left(G_{M^\prime} \partial_{vv}g_{M}\right) dv \\ 
& \quad + \nu \jap{t}^{4-K_D\epsilon} \sum_{M \in 2^\Integers} \sum_{M^\prime \approx M} \int (A^S g)_{> 1} A^S\left(G_{M^\prime} \partial_{vv}g_{M}\right) dv \\ 
& = V_{DE,\mathcal{R}}^{L} + V_{DE,\mathcal{R}}^{H}. 
\end{align*} 
The high frequencies $V_{DE,\mathcal{R}}^{H}$, as in \S\ref{sec:ZeroModeDiss}, can be treated by adding a derivative on the first factor and absorbing by the leading order dissipation in \eqref{def:VDE}. 
We omit the details and simply conclude
\begin{align} 
V_{DE,\mathcal{R}}^{H} & \lesssim \epsilon \nu  \jap{t}^{4-K_D\epsilon} \norm{\partial_v A^S g}_{2}^2, \label{ineq:VDERH}
\end{align} 
which is absorbed by the leading order dissipation in \eqref{def:VDE} for $\epsilon$ small. 

To treat $V_{DE,\mathcal{R}}^L$, also similar to \S\ref{sec:ZeroModeDiss}, we use Cauchy-Schwarz followed by Bernstein's inequalities, 
\begin{align*} 
 V_{DE,\mathcal{R}}^{L} & \lesssim \nu \jap{t}^{4-K_D\epsilon} \sum_{M \in 2^\Integers} \sum_{M^\prime \approx M} \norm{P_{\leq 1} A^S g}_{2} \norm{P_{\leq 1} A^S\left(G_{M^\prime} \partial_{vv}g_{M}\right)}_{2} \\ 
& \lesssim \nu \jap{t}^{4-K_D\epsilon} \sum_{M \in 2^\Integers} \sum_{M^\prime \approx M} \norm{g_{<1}}_{2} \norm{G_{M^\prime} \partial_{vv}g_{M}}_{2} \\ 
& \lesssim \nu \jap{t}^{4-K_D\epsilon} \sum_{M \in 2^\Integers} \sum_{M^\prime \approx M} \norm{g_{<1}}_{2} \norm{G_{M^\prime}}_\infty \norm{\partial_{vv}g_{M}}_{2} \\ 
& \lesssim \nu \jap{t}^{4-K_D\epsilon} \sum_{M \in 2^\Integers} \sum_{M^\prime \approx M} M^{3/2} \norm{g}_{2} \norm{G_{M^\prime}}_2 \norm{\partial_{v}g_{M}}_{2} \\
& \lesssim \nu \jap{t}^{4-K_D\epsilon} \sum_{M \in 2^\Integers} \sum_{M^\prime \approx M} M^{1/2} \norm{g}_{2} \norm{\partial_v G_{M^\prime}}_2 \norm{\partial_{v}g_{M}}_{2} \\ 
& \lesssim \nu \jap{t}^{4-K_D\epsilon} \sum_{M \in 2^\Integers:M \leq 1} M^{1/2} \norm{g}_{2} \norm{\partial_v\left(1 - (v^\prime)^2\right)_{\sim M}}_2 \norm{\partial_{v}g_{M}}_{2} \\ 
& \quad + \nu \jap{t}^{4-K_D\epsilon} \sum_{M \in 2^\Integers:M > 1} M^{1/2} \norm{g}_{2} \norm{\partial_v\left(1 - (v^\prime)^2\right)_{\sim M}}_2 \norm{\partial_{v}g_{M}}_{2}. 
\end{align*} 
The first term is summed by Cauchy-Schwarz in $M$ (and \eqref{ineq:GeneralOrtho})
whereas the second term is summed by paying additional derivatives on the last factor to introduce a negative power of $M$ and then applying Cauchy-Schwarz.  
Therefore, since $\sigma > 7$, 
\begin{align*} 
V_{DE,\mathcal{R}}^{L} & \lesssim \nu\jap{t}^{4-K_D\epsilon} \norm{g}_{2} \norm{\partial_v\left(1 - (v^\prime)^2\right) }_2 \norm{\partial_{v}g}_{2} + \nu\jap{t}^{4-K_D\epsilon} \norm{g}_{2} \norm{\partial_v \left(1 - (v^\prime)^2\right)}_2 \norm{\partial_{v}g}_{\G^{\lambda,\sigma-6}} \\ 
& \lesssim \nu\jap{t}^{4-K_D\epsilon} \norm{g}_{2} \norm{\partial_v \left(1 - (v^\prime)^2\right)}_2 \norm{\partial_{v}g}_{\G^{\lambda,\sigma-6}} \\ 
& \lesssim \epsilon \nu\jap{t}^{4-K_D\epsilon}\norm{\partial_{v}g}_{\G^{\lambda,\sigma-6}}^2 + \epsilon^{-1} \nu\jap{t}^{4-K_D\epsilon} \norm{g}_2^2\norm{v^\prime \partial_v v^{\prime}}_2^2. 
\end{align*} 
Now we crucially use \eqref{ineq:vppdecay} (from \eqref{ineq:Boot_hdecay}) to deduce for some $C > 0$
\begin{align} 
V_{DE,\mathcal{R}}^{L} & \leq C \epsilon \nu\jap{t}^{4-K_D\epsilon}\norm{\partial_{v}g}_{\G^{\lambda,\sigma-6}}^2 + \frac{K_D}{8}\epsilon \jap{t}^{3-K_D\epsilon} \norm{g}_2^2. \label{ineq:VDERL}
\end{align}
The first term is absorbed by the leading order dissipation in \eqref{def:VDE} and the latter term is absorbed by the damping term $V_1$ in \eqref{def:V1V2V3} by choosing $\epsilon$ sufficiently small.  

By integrating \eqref{ineq:ddtpartt} using \eqref{ineq:timederivpartialt}, \eqref{def:V1V2V3}, \eqref{ineq:V2}, \eqref{ineq:V3} \eqref{def:VDE}, \eqref{ineq:VDELH}, \eqref{ineq:VDEHL}, \eqref{ineq:VDERH} and \eqref{ineq:VDERL}, 
the proof of \eqref{ineq:gc} is completed by choosing $\epsilon$ sufficiently small. 

\subsection{Proof of \eqref{ineq:bhc}} 
We extend the proof of the corresponding statement in \cite{BM13}. 
From \eqref{def:barh} we have
\begin{align}
\frac{d}{dt} \left( \jap{t}^{2+2s} \norm{\frac{A}{\jap{\partial_v}^s} \bar{h}}_2^2\right) & = -(2-2s)t \jap{t}^{2s}\norm{\frac{A}{\jap{\partial_v}^s} \bar{h}}_2^2 - CK^{v,2}_\lambda - CK^{v,2}_w \nonumber \\ & \quad - 2 \jap{t}^{2+2s} \int \frac{A}{\jap{\partial_v}^s} \bar{h} \frac{A}{\jap{\partial_v}^s}\left(g \partial_v\bar{h} \right) dv \nonumber \\ 
& \quad + 2 t^{-1}\jap{t}^{2+2s} \int \frac{A}{\jap{\partial_v}^s} \bar{h} \frac{A}{\jap{\partial_v}^s} <v^\prime \grad^\perp P_{\neq 0} \phi \cdot \grad f > dv  \nonumber \\ & \quad + 2\nu \jap{t}^{2+2s}  \int \frac{A}{\jap{\partial_v}^s}\bar{h} \frac{A}{\jap{\partial_v}^s}\left(\tilde{\Delta_t} \bar{h}\right) dv   \nonumber \\ 
& = -CK_L^{v,2} - CK^{v,2}_\lambda - CK^{v,2}_w + \mathcal{T}^h + F + \mathcal{D}^h. \label{ineq:barhenergy}
\end{align} 
The main nonlinear terms, $\mathcal{T}^h$ and $F$, are treated in \S8.2 of \cite{BM13}. 
Using the techniques therein implies:
\begin{align}
\abs{\mathcal{T}^h} & \lesssim \epsilon CK_\lambda^{v,2} + \epsilon CK_{\lambda}^{v,1} + \epsilon \jap{t}^{2s + K_D\epsilon/2}\norm{\frac{A}{\jap{\partial_v}^s} \bar{h} }^2_2.   \label{ineq:Th}
\end{align}
Controlling the `forcing' term $F$ is one of the key estimates made in \cite{BM13} (found in \S8.2): 
\begin{align} 
\abs{F} & \lesssim \epsilon CK_{\lambda}^{v,2} + \epsilon CK_w^{v,2} + \epsilon CK_\lambda + \epsilon CK_w \nonumber \\ & \quad + \epsilon^3 \sum_{i=1}^2 CCK_\lambda^i + CCK_w^i + \epsilon CK_L^{v,2} + \epsilon^3 \jap{t}^{2s-3}. \label{ineq:F}
\end{align} 
Hence we only need to focus on the dissipation error term $\mathcal{D}^h$, which we treat in a manner 
very similar to $V_{DE}$. 
Indeed, write 
\begin{align}
\mathcal{D}^h & = -2\nu \jap{t}^{2+2s} \norm{\partial_v\frac{A}{\jap{\partial_v}^s}\bar{h}}_2^2 - 2\nu \jap{t}^{2+2s} \int \frac{A}{\jap{\partial_v}^s}\bar{h} \frac{A}{\jap{\partial_v}^s}\left( (1-(v^\prime)^2) \partial_{vv}\bar{h}\right) dv \nonumber \\ 
& = -2\nu\jap{t}^{2+2s}  \norm{\partial_v \frac{A}{\jap{\partial_v}^s}\bar{h}}_2^2 + \mathcal{E}^h \label{eq:DhEh} 
\end{align} 
As usual, we use the shorthand \eqref{def:Gshort} and decompose the error term via homogeneous paraproduct:
\begin{align*} 
\mathcal{E}^h & = -2\nu\jap{t}^{2+2s} \sum_{M \in 2^\Integers} \int \frac{A}{\jap{\partial_v}^s} \bar{h} \frac{A}{\jap{\partial_v}^s} \left(G_{M} \partial_{vv}\bar{h}_{<M/8}\right) dv \\ 
& \quad - 2\nu\jap{t}^{2+2s} \sum_{M \in 2^\Integers}\int \frac{A}{\jap{\partial_v}^s} \bar{h} \frac{A}{\jap{\partial_v}^s} \left(G_{<M/8} \partial_{vv} \bar{h}_{M}\right) dv \\
& \quad - 2\nu \jap{t}^{2+2s}\sum_{M \in 2^\Integers}\sum_{M^\prime \approx M} \int  \frac{A}{\jap{\partial_v}^s} \bar{h}\frac{A}{\jap{\partial_v}^s} \left(G_{M^\prime} \partial_{vv}\bar{h}_{M}\right) dv \\ 
& = \mathcal{E}^h_{HL} + \mathcal{E}^h_{LH} + \mathcal{E}^h_{\mathcal{R}}. 
\end{align*} 
The treatments of $\mathcal{E}^h_{HL}$ and $\mathcal{E}^h_{LH}$ mirror that of $V_{DE,HL}$ and $V_{DE,LH}$ except using also \eqref{ineq:BasicJswap}. The argument is essentially the same so it is omitted for brevity and we conclude that 
\begin{align}
 \mathcal{E}^h_{HL} + \mathcal{E}^h_{LH} & \lesssim \epsilon \nu \jap{t}^{2+2s} \norm{\partial_v \frac{A}{\jap{\partial_v}^s}\bar{h}}_2^2,\label{ineq:EhHLLH}
\end{align}
which is absorbed by the leading order dissipation in \eqref{eq:DhEh} after choosing $\epsilon$ small.  

The treatment of $\mathcal{E}^h_{\mathcal{R}}$ is also very similar to the treatment of $V_{DE,\R}$. 
Indeed, we first divide into low and high frequencies: 
\begin{align*}
\mathcal{E}^h_{\mathcal{R}} & = -2\nu \jap{t}^{2+2s}\sum_{M \in 2^\Integers}\sum_{M^\prime \approx M} \int  \left(\frac{A}{\jap{\partial_v}^s} \bar{h}\right)_{\leq 1} \frac{A}{\jap{\partial_v}^s} \left(G_{M^\prime} \partial_{vv}\bar{h}_{M}\right) dv \\ 
& \quad - 2\nu \jap{t}^{2+2s}\sum_{M \in 2^\Integers}\sum_{M^\prime \approx M} \int \left(\frac{A}{\jap{\partial_v}^s} \bar{h}\right)_{> 1}\frac{A}{\jap{\partial_v}^s} \left(G_{M^\prime} \partial_{vv}\bar{h}_{M}\right) dv \\ 
& = \mathcal{E}_{\mathcal{R}}^{L} + \mathcal{E}_{\mathcal{R}}^{H}. 
\end{align*} 
As for $V_{DE}$, the high frequencies are treated without much effort as in \S\ref{sec:ZeroModeDiss} and absorbed by the leading order dissipation \eqref{eq:DhEh} after choosing $\epsilon$ small. Hence this contribution is omitted.
To treat the lower frequencies we use an argument similar to that used to treat $V_{DE,\mathcal{R}}^L$. Indeed by Cauchy-Schwarz and Bernstein's inequalities we have as above (skipping repetitive details):
\begin{align*} 
\abs{\mathcal{E}_{\mathcal{R}}^{L}} & \lesssim \nu \jap{t}^{2+2s} \sum_{M \in 2^\Integers:M \leq 1} M^{1/2} \norm{\bar{h}}_{2} \norm{\partial_v\left(1 - (v^\prime)^2\right)_{\sim M}}_2 \norm{\partial_{v} \bar{h} _{M}}_{2} \\ 
& \quad + \nu \jap{t}^{2+2s} \sum_{M \in 2^\Integers:M > 1} M^{1/2} \norm{\bar{h}}_{2} \norm{\partial_v\left(1 - (v^\prime)^2\right)_{\sim M}}_2 \norm{\partial_{v}\bar{h}_{M}}_{2}. 
\end{align*} 
The first (low frequency) term is summed by Cauchy-Schwarz in $M$ (and \eqref{ineq:GeneralOrtho})
whereas the second term is summed by paying additional derivatives on the last factor to introduce a negative power of $M$ and then applying Cauchy-Schwarz.  
As in \S\ref{sec:gc}, since $\sigma > s + 1/2$,
\begin{align*} 
\abs{\mathcal{E}_{\mathcal{R}}^{L}} & \lesssim \nu\jap{t}^{2 + 2s} \norm{\bar{h}}_{2} \norm{\partial_v\left(1 - (v^\prime)^2\right) }_2 \norm{\partial_{v}\bar{h}}_{2} + \nu\jap{t}^{2+2s} \norm{\bar{h}}_{2} \norm{\partial_v \left(1 - (v^\prime)^2\right)}_2 \norm{\partial_{v} \frac{A}{\jap{\partial_v}^{s}}\bar{h}}_{2} \\ 
& \lesssim \epsilon \nu\jap{t}^{2+2s}  \norm{\partial_{v}\frac{A}{\jap{\partial_v}^{s}}\bar{h}}_{2}^2 + \epsilon^{-1} \nu\jap{t}^{2+2s} \norm{\bar{h}}_2^2 \norm{\partial_v \left( 1 -  (v^{\prime})^2 \right)}_2^{2}. 
\end{align*} 
By \eqref{ineq:vppdecay} we have 
\begin{align} 
\abs{\mathcal{E}_{\mathcal{R}}^{L}} & \lesssim \epsilon \nu \jap{t}^{2+2s}\norm{\partial_{v}\frac{A}{\jap{\partial_v}^{s}}\bar{h}}_{2}^2 + \epsilon \jap{t}^{1+2s} \norm{\bar{h}}_2^2. \label{ineq:EhRL}
\end{align}
The first term is absorbed by the leading order dissipation in \eqref{eq:DhEh} and the latter term is absorbed by the damping term $CK_L^{v,2}$ in \eqref{ineq:barhenergy} by choosing $\epsilon$ sufficiently small.  

Combining \eqref{ineq:barhenergy} with \eqref{ineq:Th}, \eqref{ineq:F}, \eqref{ineq:EhHLLH} and  \eqref{ineq:EhRL} and integrating in time implies \eqref{ineq:bhc} after choosing $\epsilon$ sufficiently small. 

\subsection{Proof of \eqref{ineq:ARh}}
From \eqref{def:PDEh}  we have
\begin{align}
\frac{1}{2}\frac{d}{dt}\norm{A^R h}_2^2 & = -CK^{h}_\lambda - CK^h_w - \int A^Rh A^R\left(g\partial_v h \right) dv \nonumber \\ & \quad + \int A^R h A^R \bar{h} dv + \nu \int A^R h A^R \left( \tilde{\Delta_t} h \right) dv, \label{eq:henergy}
\end{align} 
where 
\begin{subequations} 
\begin{align}
CK_w^{h}(\tau) & = \norm{\sqrt{\frac{\partial_t w}{w}}A^R h(\tau)}_2^2 \\ 
CK_\lambda^{h}(\tau) & = (-\dot{\lambda}(\tau))\norm{\abs{\partial_v }^{s/2} A^R h(\tau)}_2^2.
\end{align} 
\end{subequations}
The nonlinear transport term is controlled in \S8 of \cite{BM13}; here we simply recall the result: 
\begin{align}
\abs{\int A^Rh A^R\left(g\partial_v h \right) dv} \lesssim \epsilon CK_\lambda^h + \epsilon CK_w^h + \epsilon CK_\lambda^{v,1} + \epsilon CK_w^{v,1} + \epsilon^3 \jap{t}^{-2 + K_D\epsilon/2}.  \label{ineq:ARTrans}
\end{align} 
Similarly, the linear driving term from $\bar{h}$ is treated in \cite{BM13}; the result is for some $C_{h} > 0$: 
\begin{align} 
\abs{\int A^R h A^R \bar{h} dv} \leq \frac{1}{4}CK_\lambda^h + \frac{1}{4}CK_w^h + C_h\left(CK_\lambda^{v,2} + CK_w^{v,2}\right). \label{ineq:ARdrive}
\end{align} 
The presence of the constant $C_h$ is the primary reason for the constant $K_v$ in the main bootstrap. 

The new term we need to treat in \eqref{eq:henergy} is the dissipation term, which we treat in the same manner as the zero mode dissipation in the proof of \eqref{ineq:energyHi} in \S\ref{sec:ZeroModeDiss}. 
As there, we write
\begin{align} 
\nu \int A^R h A^R \left( \tilde{\Delta_t} h \right) dv & = - \nu \norm{\partial_v A^R h}_2^2 + \nu\int A^R h A^R \left( \left(1 - (v^\prime)^2 \right) \partial_{vv} h \right) dv \nonumber \\ 
& = - \nu \norm{\partial_v A^R h}_2^2 + E^R. \label{ineq:ARhDiss}
\end{align} 
The treatment of $E^R$ is essentially the same as $E^0$ in \S\ref{sec:ZeroModeDiss} and is hence omitted for brevity. Indeed, the coefficients of $\tilde{\Delta_t}$ have resonant regularity from \eqref{ineq:coefbds} and hence there is little difference between $A^R$ and $A$ in the proof.  
Analogous to \S\ref{sec:ZeroModeDiss} we get for $\epsilon$ sufficiently small, 
\begin{align}
E^R & \lesssim \epsilon \nu \norm{\partial_v A^R h}_2^2 + \frac{\nu \epsilon}{\jap{\nu t}^{3/2}} \norm{A^R h}_2^2. \label{ineq:E}
\end{align} 

Putting together \eqref{eq:henergy}, \eqref{ineq:ARTrans}, \eqref{ineq:ARdrive}, \eqref{ineq:ARhDiss} and \eqref{ineq:E} and integrating in time completes the proof of \eqref{ineq:ARh} after choosing $\epsilon$ small and $K_v$ sufficiently large (independent of $\epsilon$ of course).  

\section{Low frequency estimates} \label{sec:LoFreq}
\subsection{Kinetic energy and energy dissipation control: \eqref{ineq:KE}}
Compute the time evolution from the momentum equation
\begin{align} 
\frac{1}{2}\frac{d}{dt}\norm{\tilde u_0}_2^2 & = -\int \tilde u_0  \left(g \partial_v \tilde u_0\right) dv - \int \tilde u_0 v^\prime <\grad^\perp P_{\neq 0}\phi \cdot \grad \tilde u > dv
 \nonumber \\ & \quad + \nu\int  \tilde u_0 (v^{\prime})^2 \partial_{vv}\tilde u_0 dv. \label{ineq:KEEvo}
\end{align} 
For the first term we use integration by parts following by Sobolev embedding and \eqref{ineq:Boot_gc}, 
\begin{align}
-\int \tilde u_0  \left(g \partial_v \tilde u_0\right) dv & = \frac{1}{2}\int \partial_vg \abs{\tilde u_0}^2 dv  \lesssim \norm{\partial_v g}_\infty \norm{\tilde u_0}_2^2 \lesssim \frac{\epsilon}{\jap{t}^{2-K_D\epsilon/2}}\norm{\tilde u_0}_2^2. \label{ineq:KEgv}
\end{align} 
For the forcing from the non-zero frequencies, we use H\"older's inequality followed by \eqref{ineq:Boot_vprime}, Gagliardo-Nirenberg-Sobolev and Cauchy-Schwarz in $k$: 
\begin{align*} 
\abs{\int \tilde u_0 v^\prime <\grad^\perp P_{\neq 0} \phi \cdot \grad \tilde u > dv} & = \sum_{k \neq 0}  \int \tilde u_0  v^\prime \grad^\perp \phi_k \cdot \grad \tilde u_{-k}  dv \\ 
& \leq \norm{\tilde u_0}_2 \left(1 + \norm{v^\prime - 1}_\infty \right) \sum_{k \neq 0} \norm{\grad^\perp \phi_k}_4 \norm{\grad \tilde u_{-k}}_4 \\ 
& \lesssim \norm{\tilde u_0}_2  \norm{\grad^\perp P_{\neq 0} \phi}_{H^{2}} \norm{\grad P_{\neq 0} \tilde u}_{H^2}. 
\end{align*} 
Therefore, by Lemma \ref{lem:LossyElliptic}, \eqref{ineq:Boot_energyHi} and \eqref{ineq:gradulossy} we have
\begin{align} 
\abs{\int \tilde u_0 v^\prime <\grad^\perp P_{\neq 0} \phi \cdot \grad \tilde u > dv} & \lesssim \frac{\epsilon^2}{\jap{t}^3}\norm{\tilde u_0}_2 \lesssim \frac{\epsilon}{\jap{t}^3}\norm{\tilde u_0}_2^2 +\frac{\epsilon^3}{\jap{t}^3}. \label{ineq:ForcingTerm}
\end{align} 
Finally we deal with the dissipation term \eqref{ineq:KEEvo}: 
\begin{align} 
\nu\int  \tilde u_0 (v^{\prime})^2 \partial_{vv}\tilde u_0 dv = -\nu\norm{v^\prime \partial_v \tilde u_0}_2^2 - 2\nu\int \tilde u_0 v^\prime \partial_v v^\prime \partial_v \tilde u_0 dv. \label{ineq:u0Diss}
\end{align} 
Note that by \eqref{ineq:Boot_vprime}, 
\begin{align*} 
\norm{\partial_v \tilde u_0}_2^2 \approx \norm{v^\prime \partial_v \tilde u_0}_2^2.  
\end{align*}
To deal with the error term in \eqref{ineq:u0Diss}, we employ the following Gagliardo-Nirenberg-Sobolev inequality for functions $X=X(v)$,
\begin{align}
\norm{X}_\infty \lesssim \norm{X}_2^{1/2} \norm{\partial_v X}_2^{1/2}.  \label{ineq:1DGNS}
\end{align}
Applying \eqref{ineq:1DGNS} along with \eqref{ineq:hdecay} to the error term in \eqref{ineq:u0Diss} implies,
\begin{align} 
\abs{2\nu\int \tilde u_0 v^\prime \partial_v v^{\prime} \partial_v \tilde u_0 dv} & \leq 2\nu \norm{\tilde u_0}_\infty \norm{\partial_v v^{\prime}}_2 \norm{v^\prime \partial_v \tilde u_0}_2 \nonumber \\ 
& \lesssim \nu \norm{\tilde u_0}_2^{1/2} \norm{\partial_v v^{\prime}}_2 \norm{v^\prime \partial_v \tilde u_0}^{3/2}_2 \nonumber \\ 
& \lesssim \nu \epsilon^{-3} \norm{\tilde u_0}_2^{2} \norm{\partial_v v^{\prime}}_2^4 + \epsilon \nu \norm{v^\prime \partial_v \tilde u_0}^{2}_2 \nonumber \\ 
& \lesssim \frac{\nu \epsilon}{\jap{\nu t}^{4}} \norm{\tilde u_0}_2^{2}  +  \epsilon \nu\norm{v^\prime \partial_v \tilde u_0}^{2}_2. \label{ineq:KEdissErr}
\end{align} 
The second term is absorbed by the dissipation in \eqref{ineq:u0Diss} and the first term gives an integrable contribution. 
Hence, integrating \eqref{ineq:KEEvo} with \eqref{ineq:KEdissErr}, \eqref{ineq:u0Diss}, \eqref{ineq:ForcingTerm} and \eqref{ineq:KEgv} implies \eqref{ineq:KE}. 

\subsection{Decay estimate \eqref{ineq:hdecay}} \label{sec:hdecay}
First, \eqref{ineq:hdecay} follows from \eqref{ineq:Boot_energyHi} for $t \leq \nu^{-1}$, so hence assume 
$t > \nu^{-1}$ for the rest of the section.  
The $L^2$ decay estimate is obtained via an energy estimate and the application of a suitable Gagliardo-Nirenberg-Sobolev inequality. 
The $L^2$ gradient decay estimate is then obtained via iterating from the $L^2$ estimate as in standard Moser iteration methods.
The iteration procedure could be carried out for higher derivatives and higher $L^p$ norms, but this is not necessary for our purposes. 

Begin with the $L^2$ norm by computing the time evolution from \eqref{def:PDEh},  
\begin{align} 
\frac{1}{2}\frac{d}{dt}\norm{h}_2^2 = -\int h g \partial_v h dv + \int h \bar{h} dv + \nu\int h (v^\prime)^2 \partial_{vv} h dv. \label{ineq:hlowEvo}
\end{align} 
The first term is controlled via integration by parts, Sobolev embedding and \eqref{ineq:Boot_gc},
\begin{align}
-\int h g \partial_v h dv  = \frac{1}{2}\int h^2 \partial_v g dv & \leq \frac{1}{2}\norm{\partial_v g}_\infty \norm{h}_2^2 \lesssim \epsilon \jap{t}^{-2+K_D\epsilon/2}\norm{h}_2^2. \label{ineq:hlow1}
\end{align}
By Cauchy-Schwarz, \eqref{eq:barhgRelat},  \eqref{ineq:Boot_gc} and \eqref{ineq:Boot_vprime} we have, 
\begin{align} 
\int h \bar{h} dv \leq \norm{h}_2 \norm{v^\prime \partial_v g}_2 &  \lesssim \frac{\epsilon}{\jap{t}^{2 - K_D\epsilon/2}}\norm{h}_2 \nonumber \\ 
& \lesssim \frac{1}{\jap{t}^{3/2 - K_D\epsilon}}\norm{h}_2^2 + \frac{\epsilon^2}{\jap{t}^{5/2}}; \label{ineq:hlow2}
\end{align}
the choice of $5/2$ is not sharp or significant. 

Turn next to the dissipation term. By integration by parts,
\begin{align} 
\nu\int h (v^\prime)^2 \partial_{vv} h dv = -\nu\norm{v^\prime \partial_v h}_2^2 - 2\nu \int h v^\prime \partial_v v^{\prime} \partial_{v}h dv. \label{eq:hlowdiss}
\end{align}  
However, since $\partial_v v^\prime = \partial_v h$ we have by Sobolev embedding, \eqref{ineq:Boot_vprime} and \eqref{ineq:Boot_ARh},
\begin{align}
\abs{2\nu \int h v^\prime \partial_v v^{\prime} \partial_{v}h dv} & \leq 2\nu \norm{h}_\infty \norm{(v^\prime)^{-1}}_\infty \norm{v^\prime \partial_v h}_2^2 \nonumber \\ 
& \lesssim \nu\norm{h}_{H^{2}}\norm{v^\prime \partial_v h}_2^2 \nonumber \\ 
& \lesssim \epsilon \nu \norm{v^\prime \partial_v h}_2^2. \label{ineq:hlowDE}
\end{align}  
Therefore, for $\epsilon$ chosen sufficiently small, this term can be absorbed by the leading order dissipation in \eqref{eq:hlowdiss}.  

Combining \eqref{ineq:hlowEvo}, \eqref{ineq:hlow1}, \eqref{ineq:hlow2}, \eqref{eq:hlowdiss} and \eqref{ineq:hlowDE} with \eqref{ineq:Boot_vprime} yields the differential inequality,  
\begin{align} 
\frac{1}{2}\frac{d}{dt}\norm{h}_2^2 \lesssim -\nu \norm{\partial_v h}_2^2 + \frac{1}{\jap{t}^{3/2 - K_D\epsilon}}\norm{h}_2^2 + \frac{\epsilon^2}{\jap{t}^{5/2}}.\label{ineq:hdecMainEst}
\end{align}
We next show that \eqref{ineq:hdecMainEst} implies $\norm{h}_2 \lesssim \epsilon\jap{\nu t}^{-1/4}$. 
First, by the Gagliardo-Nirenberg-Sobolev inequality and the $L^1$ control \eqref{ineq:L1h} we have
\begin{align*} 
\norm{h}_2 \lesssim  \norm{h}_1^{2/3} \norm{\partial_v h}_2^{1/3} \lesssim \epsilon^{2/3}\norm{\partial_v h}_2^{1/3}.
\end{align*}
Hence, denoting $X(t) = \norm{h(t)}_2^2$, this implies with \eqref{ineq:hdecMainEst} that for $t \geq \nu^{-1}$ we have the following (\eqref{ineq:hdecay} is immediate from \eqref{ineq:ARh} for earlier times), 
\begin{align} 
\frac{d}{dt}\left( (\nu t)^{1/2} X(t) \right) \lesssim \frac{\sqrt{\nu}}{\sqrt{t}}X(t) - \nu(\nu t)^{1/2}\epsilon^{-4}X^3(t) + \frac{1}{\jap{t}^{3/2-K_D\epsilon}}(\nu t)^{1/2} X(t) + \frac{\epsilon^2 \nu^{1/2} }{\jap{t}^{2}}. \label{ineq:Xdinq}
\end{align} 
Next use the $O(1)$ integrating factor
\begin{align*} 
Y(t) = X(t)\exp\left[ -\int_{\nu^{-1}}^t \jap{\tau}^{K_D\epsilon - 3/2} d\tau \right]; 
\end{align*}
note that $X(t) \approx Y(t)$. 
This collapses \eqref{ineq:Xdinq} to
\begin{align}  
\frac{d}{dt}\left( (\nu t)^{1/2}Y(t)\right) \lesssim \frac{\sqrt{\nu}}{\sqrt{t}}Y(t) -\nu(\nu t)^{1/2}\epsilon^{-4}Y^3(t) + \frac{\epsilon^2 \nu^{1/2} }{\jap{t}^{2}}. \label{ineq:Yinq}
\end{align} 
For $K_h^\prime$ chosen sufficiently large (depending on the constants in \eqref{ineq:Yinq}), the following is a supersolution of the differential equality corresponding to \eqref{ineq:Yinq}:
\begin{align*} 
Y_{S}(t) = \frac{K_h^\prime \epsilon^2}{\jap{\nu t}^{1/2}} + \frac{K_h^\prime}{\jap{\nu t}^{1/2}} \int_{\nu^{-1}}^t \frac{\epsilon^2 \nu^{1/2} }{\jap{\tau}^{2}} d\tau.
\end{align*} 
Moreover, for $K_h^\prime$ large, we have $Y(\nu^{-1}) \leq Y_S(\nu^{-1})$ and hence by comparison we have
$Y(t) \leq Y_S(t)$ for all $t \geq \nu^{-1}$, from which the $L^2$ estimate in \eqref{ineq:hdecay} follows after
possibly adjusting $K_v$.

Turn next to the $\partial_v h$ bound stated in \eqref{ineq:hdecay}, for which we will bootstrap off the $L^2$ decay estimate.
Taking a derivative of \eqref{def:PDEh} gives
\begin{align*} 
\partial_t \partial_v h + \partial_v g \partial_v h + g \partial_{vv} h = \partial_v \bar{h} + \nu \partial_v \left((v^\prime)^2 \partial_{vv}h\right).    
\end{align*}  
Hence calculating the evolution of the $L^2$ norm and applying integration by parts: 
\begin{align} 
\frac{1}{2}\frac{d}{dt}\norm{\partial_v h}_2^2 & = -\int \partial_vg\abs{\partial_v h}^2 dv 
  - \int \partial_v h g \partial_{vv} h dv \nonumber \\
& \quad + \int \partial_v h \partial_v \bar{h} dv - \nu \norm{v^\prime \partial_{vv}h}_2^2  \nonumber \\ 
& =  -\frac{1}{2}\int \abs{\partial_vh}^2 \partial_v g dv + \int \partial_v h \partial_v \bar{h} dv - \nu\norm{v^\prime \partial_{vv}h}_2^2.  \label{ineq:ghdec_Evo}
\end{align}
The first term is treated with Sobolev embedding and \eqref{ineq:Boot_gc},
\begin{align} 
-\frac{1}{2}\int \abs{\partial_vh}^2 \partial_v g dv \lesssim \epsilon \jap{t}^{-2+K_D\epsilon/2} \norm{\partial_v h}_2^2. \label{ineq:ghdec_gt}
\end{align} 
The second term in \eqref{ineq:ghdec_Evo} is controlled by \eqref{ineq:Boot_gc} and \eqref{ineq:Boot_vprime} via \eqref{eq:barhgRelat} and Sobolev embedding:  
\begin{align}
\abs{\int \partial_v h \partial_v \bar{h} dv} & \leq \norm{\partial_v h}_2 \left(\norm{v^\prime}_\infty\norm{\partial_{vv} g}_2 + \norm{\partial_v h}_2\norm{\partial_{v} g}_\infty\right) \nonumber  \\ 
& \lesssim \epsilon \jap{t}^{-2+K_D\epsilon/2}\norm{\partial_v h}_2 + \epsilon \jap{t}^{-2+K_D\epsilon/2} \norm{\partial_v h}_2^2 \nonumber \\
& \lesssim \jap{t}^{-\frac{5}{4}+K_D\epsilon}\norm{\partial_v h}^2_2 + \epsilon^2 \jap{t}^{-11/4}. \label{ineq:ghdec_barh}
\end{align} 
We note that the $11/4$ is not sharp or significant, however, it must be chosen larger than $5/2$ to get the optimal decay (and cannot be as large as $3$).
Putting together \eqref{ineq:ghdec_Evo}, \eqref{ineq:ghdec_gt} and \eqref{ineq:ghdec_barh} with the bootstrap control on $v^\prime$, we get
\begin{align*}
\frac{d}{dt}\norm{\partial_v h}_2^2 & \lesssim -\nu\norm{\partial_{vv}h}_2^2 + \jap{t}^{-\frac{5}{4}+K_D\epsilon}\norm{\partial_v h}^2_2 + \epsilon^2 \jap{t}^{-11/4}. 
\end{align*}
By interpolation and the $L^2$ decay estimate in \eqref{ineq:Boot_hdecay} we have
\begin{align*} 
\norm{\partial_v h}_2 \leq \norm{h}_2^{1/2}\norm{\partial_{vv}h}_2^{1/2} \lesssim \jap{\nu t}^{-1/8}\epsilon^{1/2} \norm{\partial_{vv}h}_2^{1/2}, 
\end{align*}
which therefore implies, 
\begin{align*} 
\frac{d}{dt}\norm{\partial_v h}_2^2 & \lesssim -\nu \epsilon^{-2} \jap{\nu t}^{1/2}\norm{\partial_v h}^4_2  + \jap{t}^{-\frac{5}{4}+K_D\epsilon}\norm{\partial_v h}^2_2 + \epsilon^2 \jap{t}^{-11/4}. 
\end{align*}
Following the proof of the $L^2$ decay estimate, we define 
 $Y(t) =\norm{\partial_v h}_2^2\exp\left[-\int_{\nu^{-1}}^t \jap{\tau}^{-5/4 + K_D\epsilon} d\tau\right]$ and for $t \geq \nu^{-1}$ we have
\begin{align*}
\frac{d}{dt}\left( (\nu t)^{3/2}Y(t) \right) \lesssim \frac{(\nu t)^{3/2}}{t}Y(t) -\nu(\nu t)^{2}\epsilon^{-2}Y^2(t) + \frac{\epsilon^2 \nu^{3/2} }{\jap{t}^{5/4}}.
\end{align*} 
By comparing against the supersolution (for $K^{\prime\prime}_h$ sufficiently large), 
\begin{align*} 
Y_S(t) = \frac{K_h^{\prime\prime} \epsilon^2}{ (\nu t)^{3/2} } + \frac{K_h^{\prime\prime} \epsilon^2}{ (\nu t)^{3/2} } \int_{\nu^{-1}}^t\frac{ \nu^{3/2} }{\jap{\tau}^{5/4}} d\tau, 
\end{align*}
and choosing $K_h^{\prime\prime}$ large enough to ensure $Y(\nu^{-1}) < Y_S(\nu^{-1})$ we deduce $Y(t) \lesssim Y_S(t)$ for $t \geq \nu^{-1}$, which implies the $\norm{\partial_v h}_2$ decay estimate in \eqref{ineq:hdecay} (after possibly adjusting $K_v$ further).

\subsection{Zero frequency \texorpdfstring{$L^2$}{L 2} decay}
Here we prove \eqref{ineq:LoDecay}, which is not necessary for the  proof of the other statements in  Proposition \ref{prop:BootProp}, but is included in the statement of Theorem \ref{thm:Main} and gives the 
relaxation to the Couette flow.  
Begin by computing the evolution of the $L^2$ norm of the zero frequency, 
\begin{align} 
\frac{1}{2}\frac{d}{dt}\norm{f_0}_2^2 = -\int f_0 g \partial_v f_0 dv - \int f_0  v^\prime <\grad^\perp P_{\neq 0} \phi \cdot \grad f > dv + \nu\int f_0 (v^\prime)^2 \partial_{vv}{f} dv. \label{ineq:f0dvdt}
\end{align} 
The first term is controlled via integration by parts, Sobolev embedding and \eqref{ineq:Boot_gc}, 
\begin{align}
-\int f_0 g \partial_v f_0 dv & = \frac{1}{2}\int \abs{f_0}^2 \partial_v g dv \lesssim \epsilon \jap{t}^{-2+K_D\epsilon/2}\norm{f_0}_2^2. \label{ineq:f0gdv}
\end{align}
Using the same proof as in \eqref{ineq:ForcingTerm}, it follows from Lemma \ref{lem:LossyElliptic}, \eqref{ineq:Boot_vprime}  and \eqref{ineq:Boot_energyHi} that
\begin{align} 
-\int f_0  v^\prime <\grad^\perp P_{\neq 0} \phi \cdot \grad f > dv & \lesssim \frac{\epsilon^2}{\jap{t}^2}\norm{f_0}_2 \lesssim \frac{\epsilon}{\jap{t}^{3/2}}\norm{f_0}_2^2 + \frac{\epsilon^3}{\jap{t}^{5/2}}. \label{ineq:forcingf0}
\end{align} 
Turn next to the dissipation term: 
\begin{align} 
\nu\int f_0 (v^\prime)^2 \partial_{vv}{f} dv = -\nu\norm{v^\prime \partial_v f_0}_2^2 - 2\nu \int v^\prime \partial_v v^{\prime} f_0 \partial_{v}{f} dv.  \label{ineq:f0Diss}
\end{align}  
By H\"older's inequality, the Gagliardo-Nirenberg-Sobolev inequality \eqref{ineq:1DGNS}, and \eqref{ineq:vppdecay} we have 
\begin{align} 
\nu\int f_0 v^{\prime\prime} \partial_v f_0 dv & \leq \nu\norm{f_0}_\infty \norm{v^{\prime\prime}}_2 \norm{\partial_v f_0}_2 \nonumber \\ 
& \lesssim \nu\norm{f_0}_2^{1/2} \norm{v^{\prime\prime}}_2 \norm{\partial_v f_0}^{3/2}_2 \nonumber \\ 
& \lesssim \epsilon^{-3} \nu \norm{v^{\prime\prime}}_2^4 \norm{f_0}_2^{2} + \epsilon \nu \norm{\partial_v f_0}^{2}_2 \nonumber \\ 
& \lesssim \epsilon \nu \jap{\nu t}^{-3} \norm{f_0}_2^{2} +\epsilon \nu\norm{\partial_v f_0}^{2}_2. \label{ineq:f0DissError}
\end{align} 
Choosing $\epsilon$ sufficiently small and putting \eqref{ineq:f0gdv},  \eqref{ineq:forcingf0}, \eqref{ineq:f0Diss}, \eqref{ineq:f0DissError} together with \eqref{ineq:f0dvdt} gives the differential inequality 
\begin{align} 
\frac{1}{2}\frac{d}{dt}\norm{f_0}_2^2 \lesssim -\nu \norm{\partial_v f_0}_2^2 + \left(\frac{\epsilon}{\jap{t}^{3/2}} + \frac{\epsilon \nu}{\jap{\nu t}^{3}} \right)\norm{f_0}_2^2 + \frac{\epsilon^3}{\jap{t}^{5/2}}, \label{ineq:f0diff}
\end{align}
(the $5/2$ is not sharp or significant).
By the Gagliardo-Nirenberg-Sobolev inequality together with \eqref{ineq:fLp} and \eqref{ineq:Boot_vprime} we get, 
\begin{align} 
\norm{f_0}_2 \lesssim  \norm{f_0}_1^{2/3} \norm{\partial_v f_0}_2^{1/3} \lesssim \epsilon^{2/3}\norm{\partial_v f_0}_2^{1/3}.\label{ineq:Gagbdf0}
\end{align}
As in the proof of \eqref{ineq:hdecay} in \S\ref{sec:hdecay}, 
we use the integrating factor 
\begin{align*} 
Y(t) = \norm{f_0(t)}_2^2\exp\left[ -\int_{\nu^{-1}}^t \left(\frac{\epsilon}{\jap{t}^{3/2}} + \frac{\epsilon \nu}{\jap{\nu t}^{3}} \right) d\tau \right], 
\end{align*}
together with \eqref{ineq:Gagbdf0}, to reduce \eqref{ineq:f0diff} to the differential inequality, 
\begin{align} 
\frac{d}{dt}\left( (\nu t)^{1/2}Y(t)\right) \lesssim \frac{\sqrt{\nu}}{\sqrt{t}}Y(t) -\nu(\nu t)^{1/2}\epsilon^{-4}Y^3(t) + \frac{\epsilon^3 \nu^{1/2} }{\jap{t}^{2}}. \label{ineq:f0Y}
\end{align} 
For $K_f$ chosen sufficiently large (depending on the constants in \eqref{ineq:f0Y}), the following is a supersolution of the differential equality corresponding to \eqref{ineq:f0Y}:
\begin{align*} 
Y_{S}(t) = \frac{K_f\epsilon^2}{\jap{\nu t}^{1/2}} + \frac{K_f}{\jap{\nu t}^{1/2}} \int_{\nu^{-1}}^t \frac{\epsilon^3 \nu^{1/2} }{\jap{\tau}^{2}} d\tau.
\end{align*} 
For $K_f$ large, we have $Y(\nu^{-1}) \leq Y_S(\nu^{-1})$ and hence by comparison it follows that
$Y(t) \leq Y_S(t)$ for all $t \geq \nu^{-1}$, from which we deduce the $L^2$ decay stated in \eqref{ineq:LoDecay}.

\section*{Acknowledgments}
The authors would like to thank the following people for references and suggestions: Margaret Beck, Steve Childress, Peter Constantin, Pierre Germain, Yan Guo and Gene Wayne. The work of JB was in part supported by NSF Postdoctoral Fellowship in Mathematical Sciences DMS-1103765 and NSF grant DMS-1413177, the work of NM was in part supported by the NSF grant DMS-1211806, while the work of VV was in part supported by the NSF grant DMS-1348193.

\appendix
\section{Appendix}
\subsection{Littlewood-Paley decomposition and paraproducts} \label{Apx:LPProduct}
In this section we fix conventions and notation regarding Fourier analysis, Littlewood-Paley and paraproduct decompositions. 
See e.g. \cite{Bony81,BCD11}  for more details. 

For $f(z,v)$ in the Schwartz space, we define the Fourier transform $\hat{f}_k(\eta)$ where $(k,\eta) \in \Integer \times \Real$, 
\begin{align*} 
\hat{f}_k(\eta) = \frac{1}{2\pi}\int_{\Torus \times \Real} e^{-i z k - iv\eta} f(z,v) dz dv. 
\end{align*}
Similarly we have the Fourier inversion formula, 
\begin{align*} 
f(z,v) = \frac{1}{2\pi}\sum_{k \in \Integer} \int_{\Real} e^{i z k + iv\eta} \hat{f}_k(\eta) d\eta. 
\end{align*} 
As usual the Fourier transform and its inverse are extended to $L^2$ via duality. 
We also need to apply the Fourier transform to functions of $v$ alone, for which we use analogous conventions.
With these conventions note, 
\begin{align*} 
\int f(z,v) \overline{g}(z,v) dzdv & = \sum_{k}\int \hat{f}_k(\eta) \overline{g}_{k}(\eta) d\eta \\ 
\widehat{fg} & = \frac{1}{2\pi}\hat{f} \ast \hat{g} \\ 
(\widehat{\grad f})_k(\eta) & = (ik,i\eta)\hat{f}_k(\eta). 
\end{align*}

This work makes heavy use of the Littlewood-Paley dyadic decomposition.  
Here we fix conventions and review the basic properties of this classical theory, see e.g. \cite{BCD11} for more details. 
First we define the Littlewood-Paley decomposition only in the $v$ variable. 
Let $\psi \in C_0^\infty(\Real;\Real)$ be such that $\psi(\xi) = 1$ for $\abs{\xi} \leq 1/2$ and $\psi(\xi) = 0$ for $\abs{\xi} \geq 3/4$ and define $\rho(\xi) = \psi(\xi/2) - \psi(\xi)$, supported in the range $\xi \in (1/2,3/2)$. 
Then we have the partition of unity 
\begin{align*} 
1 = \psi(\xi) + \sum_{M \in 2^\Naturals} \rho(M^{-1}\xi), 
\end{align*}  
where we mean that the sum runs over the dyadic numbers $M = 1,2,4,8,...,2^{j},...$
 and we define the cut-off $\rho_M(\xi) = \rho(M^{-1}\xi)$, each supported in $M/2 \leq \abs{\xi} \leq 3M/2$. 
For $f \in L^2(\Real)$ we define
\begin{align*} 
f_{M} & = \rho_M(\abs{\partial_v})f, \\ 
f_{\frac{1}{2}} & =  \psi(\abs{\partial_v})f, \\ 
f_{< M} & = f_{\frac{1}{2}} + \sum_{K \in 2^{\Naturals}: K < M} f_K, 
\end{align*}
which defines the decomposition  
\begin{align*} 
f = f_{\frac{1}{2}} + \sum_{M \in 2^\Naturals} f_M.  
\end{align*}
Normally one would use $f_0$ rather than the slightly inconsistent $f_{\frac{1}{2}}$, however $f_0$ is reserved for the much more commonly used projection onto the zero mode only in $z$ (or $x$).  
Recall the definition of $\mathbb D$ from \S\ref{sec:Notation}. There holds the almost orthogonality and the approximate projection property 
\begin{subequations} \label{ineq:LPOrthoProject}
\begin{align} 
\norm{f}^2_2 & \approx \sum_{M \in \mathbb D} \norm{f_M}_2^2 \\
 \norm{f_M}_2 & \approx  \norm{(f_{M})_M}_2. 
\end{align}
\end{subequations}
The following is also clear:
\begin{align*} 
\norm{\abs{\partial_v}f_M}_2 \approx M \norm{f_M}_2. 
\end{align*}
We make use of the notation 
\begin{align*} 
f_{\sim M} = \sum_{K \in \mathbb{D}: \frac{1}{C}M \leq K \leq CM} f_{K}, 
\end{align*}
for some constant $C$ which is independent of $M$.
Generally the exact value of $C$ which is being used is not important; what is important is that it is finite and independent of $M$. 
Similar to \eqref{ineq:LPOrthoProject} but more generally, if $f = \sum_{k} D_k$ for any $D_k$ with $\frac{1}{C}2^{k} \subset \textup{supp}\, D_k \subset C2^{k}$ it follows that 
\begin{align} 
\norm{f}^2_2 \approx_C \sum_{k} \norm{D_k}_2^2. \label{ineq:GeneralOrtho}
\end{align}
During much of the proof we are also working with Littlewood-Paley decompositions defined in the $(z,v)$ variables, 
with the notation conventions being analogous. 
Our convention is to use $N$ to denote Littlewood-Paley projections in $(z,v)$ and $M$ to denote projections only in the $v$ direction. 

We have opted to use the compact notation above, rather than the commonly used alternatives
\begin{align*} 
\Delta_{j}f  = f_{2^j}, \quad\quad S_jf  = f_{<2^j},
\end{align*}
in order to reduce the number of characters in long formulas. 
The last unusual notation we use is 
\begin{align*} 
P_{\neq 0}f = f - <f>, 
\end{align*}
which denotes projection onto the non-zero modes in $z$.

Another key Fourier analysis tool employed in this work is the paraproduct decomposition, introduced by Bony \cite{Bony81} (see also \cite{BCD11}). 
Given suitable functions $f,g$ we may define the \emph{inhomogeneous} paraproduct decomposition (in either $(z,v)$ or just $v$), 
\begin{align} 
fg & = T_fg + T_gf + \mathcal{R}(f,g) \label{def:lameppd}\\ 
& = \sum_{N \geq 8} f_{<N/8}g_N + \sum_{N \geq 8} g_{<N/8}f_N + \sum_{N \in \mathbb{D}}\sum_{N/8 \leq N^\prime \leq 8N} g_{N^\prime}f_{N}, \nonumber
\end{align}
where all the sums are understood to run over $\mathbb D$, or the \emph{homogeneous} paraproduct
\begin{align*} 
fg & =  \sum_{N \in 2^\Integer} f_{<N/8}g_N + \sum_{N \in 2^\Integer} g_{<N/8}f_N + \sum_{N \in 2^\Integer}\sum_{N/8 \leq N^\prime \leq 8N} g_{N^\prime}f_{N}.  
\end{align*}
The choice of which one we employ depends on the role that low frequencies are playing in the proof. 
In our work we do not employ the notation in \eqref{def:lameppd} since at most steps we are working in non-standard regularity spaces and are usually applying multipliers which do not satisfy any version of $AT_f g \approx T_f Ag$. 
Hence, we normally have to prove most everything by hand and only rely on standard para-differential calculus as a guide.

\subsection{Elementary inequalities and Gevrey spaces} \label{apx:Gev}

In the sequel we show some basic inequalities which are extremely useful for working in this scale of spaces.  
The first three are versions of Young's inequality (applied on the frequency-side here).
\begin{lemma}
Let $f(\xi),g(\xi) \in L_\xi^2(\Real^d)$, $\jap{\xi}^\sigma h(\xi) \in L_\xi^2(\Real^d)$ and $\jap{\xi}^\sigma b(\xi) \in L_\xi^2(\Real^d)$  for $\sigma > d/2$, 
Then we have 
\begin{align} 
\norm{f \ast h}_2 & \lesssim_{\sigma, d} \norm{f}_2\norm{\jap{\cdot}^\sigma h}_2, \label{ineq:L2L1}  \\
\int \abs{f(\xi) (g \ast h)(\xi)} d\xi & \lesssim_{\sigma,d} \norm{f}_2\norm{g}_2\norm{\jap{\cdot}^\sigma h}_2 \label{ineq:L2L2L1} \\ 
\int \abs{f(\xi) (g \ast h \ast b) (\xi)} d\xi & \lesssim_{\sigma,d} \norm{f}_2\norm{g}_2\norm{\jap{\cdot}^\sigma h}_2\norm{\jap{\cdot}^\sigma b}_2. \label{ineq:L2L2L1L1}  
\end{align}
\end{lemma}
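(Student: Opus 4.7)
The plan is to derive all three inequalities from the classical Young's convolution inequality combined with a single weighted $L^1$ bound. The key observation is that the hypothesis $\sigma > d/2$ is exactly what is needed to make $\jap{\cdot}^{-\sigma} \in L^2(\Real^d)$, since
\[
\int_{\Real^d} \jap{\xi}^{-2\sigma} d\xi < \infty \quad \iff \quad \sigma > d/2.
\]
This observation, combined with Cauchy-Schwarz, yields the auxiliary estimate
\[
\norm{h}_1 = \int \abs{h(\xi)} \jap{\xi}^\sigma \jap{\xi}^{-\sigma} d\xi \leq \norm{\jap{\cdot}^{-\sigma}}_2 \norm{\jap{\cdot}^\sigma h}_2 \lesssim_{\sigma,d} \norm{\jap{\cdot}^\sigma h}_2,
\]
and the same estimate for $b$ in place of $h$.

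With this reduction in hand, I would prove \eqref{ineq:L2L1} by applying the standard Young's inequality $\norm{f \ast h}_2 \leq \norm{f}_2 \norm{h}_1$ and then inserting the bound above. For \eqref{ineq:L2L2L1}, I would first apply Cauchy-Schwarz in $\xi$ to obtain
\[
\int \abs{f(\xi)(g \ast h)(\xi)} d\xi \leq \norm{f}_2 \norm{g \ast h}_2,
\]
then invoke \eqref{ineq:L2L1} (or equivalently Young's inequality directly) to bound $\norm{g \ast h}_2 \leq \norm{g}_2 \norm{h}_1 \lesssim \norm{g}_2 \norm{\jap{\cdot}^\sigma h}_2$. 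For \eqref{ineq:L2L2L1L1}, I would proceed identically: Cauchy-Schwarz in $\xi$ to extract $\norm{f}_2$, then apply Young's inequality twice, $\norm{g \ast h \ast b}_2 \leq \norm{g}_2 \norm{h}_1 \norm{b}_1$, and finally replace $\norm{h}_1$ and $\norm{b}_1$ by the weighted $L^2$ norms via the auxiliary estimate.

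There is no significant obstacle here; the entire content of the lemma is the embedding $L^2_\sigma \hookrightarrow L^1$ for $\sigma > d/2$ applied to the convolution kernels $h$ (and $b$). The only mild care needed is to check that the implicit constants depend only on $\sigma$ and $d$ through the single quantity $\norm{\jap{\cdot}^{-\sigma}}_{L^2(\Real^d)}$, which is manifest from the argument above.
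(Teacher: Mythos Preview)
Your proposal is correct and matches the paper's own description: the paper does not give a detailed proof but simply labels these as ``versions of Young's inequality (applied on the frequency-side here),'' which is precisely your approach of combining Young's inequality with the Cauchy--Schwarz embedding $\norm{h}_1 \lesssim_{\sigma,d} \norm{\jap{\cdot}^\sigma h}_2$ for $\sigma > d/2$.
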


The next set of inequalities show that one can often gain on the index of regularity when comparing frequencies which are not too far apart (provided $0 < s < 1$).
They are crucial for doing paradifferential calculus in Gevrey regularity. 

\begin{lemma}
Let $0 < s < 1$ and $x \geq y \geq 0$ (without loss of generality).  
\begin{itemize} 
\item[(i)] If $x + y > 0$,
\begin{align} 
\abs{x^s - y^s} \lesssim_s \frac{1}{x^{1-s} + y^{1-s}}\abs{x-y}. \label{ineq:TrivDiff}
\end{align}
\item[(ii)] If $\abs{x-y} \leq x/K$ for some $K > 1$ then 
\begin{align} 
\abs{x^s - y^s} \leq \frac{s}{(K-1)^{1-s}}\abs{x-y}^s. \label{lem:scon}
\end{align} 
Note $\frac{s}{(K-1)^{1-s}} < 1$ as soon as $s^{\frac{1}{1-s}} + 1 < K$. 
\item[(iii)] In general, 
\begin{align} 
\abs{x + y}^s \leq \left(\frac{x}{x+y}\right)^{1-s}\left(x^s + y^s\right). \label{lem:smoretrivial}
\end{align}  
In particular, if $\frac{1}{K}y \leq x \leq Ky$ for some $K < \infty$ then 
\begin{align} 
\abs{x + y}^s \leq \left(\frac{K}{1 + K}\right)^{1-s}\left(x^s + y^s\right). \label{lem:strivial}
\end{align} 
\end{itemize}
\end{lemma}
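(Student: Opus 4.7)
The three statements are elementary inequalities about the concave function $t\mapsto t^s$ on $[0,\infty)$ with $0<s<1$, so the plan is short and largely mechanical.

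For part (i), the plan is to split into two regimes depending on how comparable $x$ and $y$ are. When $y \geq x/2$, the mean value theorem gives $x^s - y^s = s\xi^{s-1}(x-y)$ for some $\xi \in [y,x]$, hence $|x^s-y^s| \leq s(x-y)/y^{1-s} \lesssim_s (x-y)/x^{1-s}$, which is comparable to $|x-y|/(x^{1-s}+y^{1-s})$ since $x^{1-s}+y^{1-s} \approx x^{1-s}$ in this regime. When $y < x/2$, we have $x-y \geq x/2$, so $|x-y| \approx x$, and then $|x^s - y^s| \leq x^s = x/x^{1-s} \lesssim |x-y|/(x^{1-s}+y^{1-s})$.

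For part (ii), assume $x\geq y$. The hypothesis $|x-y|\leq x/K$ forces $y\geq x(K-1)/K$. By the mean value theorem $|x^s-y^s|\leq s(x-y)/y^{1-s} \leq s(x-y)\bigl(K/[(K-1)x]\bigr)^{1-s}$. Writing $(x-y) = (x-y)^s(x-y)^{1-s}$ and using $(x-y)^{1-s}\leq (x/K)^{1-s}$ collapses the $x$-factors and yields exactly $|x^s-y^s|\leq s(K-1)^{-(1-s)}|x-y|^s$.

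For part (iii), WLOG $y\leq x$. Dividing both sides by $x^s$ and setting $t=y/x\in[0,1]$ reduces (\ref{lem:smoretrivial}) to the one-variable inequality $(1+t)^s \leq (1+t)^{-(1-s)}(1+t^s)$, equivalently $1+t \leq 1+t^s$, which is just $t\leq t^s$ on $[0,1]$. The refinement (\ref{lem:strivial}) then follows by noting that when $y\leq x\leq Ky$ we have $x/(x+y) = 1/(1+y/x) \leq 1/(1+1/K) = K/(K+1)$, and substituting into (\ref{lem:smoretrivial}).

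There is no real obstacle: each inequality is a one- or two-line computation once the right normalization or MVT invocation is chosen. The only care needed is (a) choosing the correct case split for (i) so that $y^{1-s}$ in the MVT bound is controlled by $x^{1-s}$, and (b) for (ii), explicitly tracking the constant $s/(K-1)^{1-s}$ so that the threshold $K > s^{1/(1-s)}+1$ for it to be less than $1$ can be read off.
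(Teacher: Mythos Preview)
Your proof is correct in all three parts. The paper states this lemma without proof, treating the inequalities as elementary; your mean-value-theorem argument for (i) and (ii) and the one-variable reduction for (iii) are exactly the standard computations one would supply, with constants tracked carefully enough to read off the threshold $K > s^{1/(1-s)}+1$ in (ii).
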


Using \eqref{ineq:L2L2L1}, \eqref{lem:scon} and \eqref{lem:strivial} together with a paraproduct expansion, the following product lemma is relatively straightforward. 
For contrast, the lemma holds when $s = 1$ only for $c = 1$. 
\begin{lemma}[Product lemma] \label{lem:GevProdAlg}
For all $0<s<1$, $\sigma \geq 0$ and $\sigma_0 > 1$, there exists $c = c(s,\sigma,\sigma_0) \in (0,1)$ such that the following holds for all $f,g \in \mathcal{G}^{\lambda,\sigma;s}$:
\begin{subequations}
\begin{align} 
\norm{fg}_{\G^{\lambda,\sigma}} & \lesssim \norm{f}_{\G^{c\lambda,\sigma_0}} \norm{g}_{\G^{\lambda,\sigma}} + \norm{g}_{\G^{c\lambda,\sigma_0}} \norm{f}_{\lambda,\sigma}, \label{ineq:GProduct}
\end{align}
\end{subequations}
in particular, $\mathcal{G}^{\lambda,\sigma;s}$ has the algebra property:
\begin{align} 
\norm{f g}_{\G^{\lambda,\sigma}} & \lesssim \norm{f}_{\G^{\lambda,\sigma}} \norm{g}_{\G^{\lambda,\sigma}} \label{ineq:GAlg} 
\end{align} 
\end{lemma}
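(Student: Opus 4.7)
The plan is to decompose the product via the inhomogeneous paraproduct $fg = T_f g + T_g f + \mathcal{R}(f,g)$ from \eqref{def:lameppd} and bound each piece separately on the Fourier side against the multiplier $A_k(\eta) := e^{\lambda\abs{k,\eta}^s}\jap{k,\eta}^\sigma$. For each of the three pieces my goal is to show that the ``high-frequency'' factor can carry the full $\G^{\lambda,\sigma}$ weight while the ``low-frequency'' factor requires only a strictly weaker $\G^{c\lambda,\sigma_0}$ weight with $c = c(s,\sigma,\sigma_0) \in (0,1)$ and Sobolev corrector $\sigma_0 > 1$ large enough to run the Young-type convolution bound \eqref{ineq:L2L1}.

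For $T_f g = \sum_{N \geq 8} f_{<N/8} g_N$ the paraproduct frequency localization forces $\abs{k,\eta}$ to be comparable to the $g$-frequency $\abs{l,\xi}$, so immediately $\jap{k,\eta}^\sigma \lesssim \jap{l,\xi}^\sigma$ and the Sobolev weight moves to the $g$ factor. For the Gevrey exponential the key input is \eqref{lem:scon}: since on support $\abs{k-l,\eta-\xi} \leq \frac{3}{16}\abs{l,\xi}$, there exists $c \in (0,1)$ depending only on $s$ such that $\abs{k,\eta}^s \leq \abs{l,\xi}^s + c\abs{k-l,\eta-\xi}^s$, whence $e^{\lambda\abs{k,\eta}^s} \lesssim e^{\lambda\abs{l,\xi}^s} e^{c\lambda\abs{k-l,\eta-\xi}^s}$. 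Substituting this into the Fourier convolution, applying \eqref{ineq:L2L1} with the extra $\jap{\cdot}^{\sigma_0}$ absorbed by the $f$ factor, and summing in $N$ via almost-orthogonality \eqref{ineq:GeneralOrtho}, yields
\[
\norm{A T_f g}_2 \lesssim \norm{f}_{\G^{c\lambda,\sigma_0}} \norm{A g}_2.
\]
The term $T_g f$ is handled symmetrically.

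For the remainder $\mathcal{R}(f,g) = \sum_N \sum_{N/8 \leq N' \leq 8N} f_{N'} g_N$, the two frequencies $\abs{l,\xi}$ and $\abs{k-l,\eta-\xi}$ are comparable (both $\approx N$), so \eqref{lem:scon} no longer applies. Instead I would use the triangle inequality $\abs{k,\eta} \leq \abs{l,\xi} + \abs{k-l,\eta-\xi}$ together with \eqref{lem:strivial} (which exploits the boundedness of the ratio between the two summands) to obtain $\abs{k,\eta}^s \leq c(\abs{l,\xi}^s + \abs{k-l,\eta-\xi}^s)$ for some $c < 1$, and hence $e^{\lambda\abs{k,\eta}^s} \lesssim e^{c\lambda\abs{l,\xi}^s} e^{c\lambda\abs{k-l,\eta-\xi}^s}$. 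Placing the Sobolev weight $\jap{k,\eta}^\sigma \lesssim \jap{l,\xi}^\sigma$ on the $g$ factor and the auxiliary $\jap{\cdot}^{\sigma_0}$ on the $f$ factor, then applying \eqref{ineq:L2L1} and Cauchy--Schwarz in the dyadic parameter $N$, gives $\norm{A \mathcal{R}(f,g)}_2 \lesssim \norm{f}_{\G^{c\lambda,\sigma_0}} \norm{g}_{\G^{c\lambda,\sigma}} \leq \norm{f}_{\G^{c\lambda,\sigma_0}} \norm{g}_{\G^{\lambda,\sigma}}$, which fits into the right-hand side of \eqref{ineq:GProduct}. Summing the three contributions proves \eqref{ineq:GProduct}. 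The algebra estimate \eqref{ineq:GAlg} is then immediate: for any $c < 1$ the exponential slack swallows any polynomial weight, so $\norm{h}_{\G^{c\lambda,\sigma_0}} \lesssim \norm{h}_{\G^{\lambda,0}} \leq \norm{h}_{\G^{\lambda,\sigma}}$ whenever $\sigma \geq 0$.

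Main obstacle: there is no substantive difficulty---the authors themselves call the lemma ``relatively straightforward''---and the entire argument reduces to bookkeeping over Littlewood--Paley pieces. The one point requiring genuine care is that in each region of the paraproduct decomposition one must simultaneously gain a strict factor $c < 1$ in the Gevrey exponent of one factor \emph{and} retain enough Sobolev slack to execute \eqref{ineq:L2L1}; the inequalities \eqref{lem:scon} and \eqref{lem:strivial} are precisely calibrated to the two qualitatively different frequency regimes (one dominant frequency versus two comparable ones), and this is what drives the split into the three paraproduct pieces.
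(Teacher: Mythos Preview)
Your proposal is correct and follows precisely the approach the paper indicates: paraproduct decomposition, \eqref{lem:scon} on the $T_f g$ and $T_g f$ pieces to gain $c<1$ on the low-frequency factor, \eqref{lem:strivial} on the remainder where the two input frequencies are comparable, and the Young-type bound \eqref{ineq:L2L1}/\eqref{ineq:L2L2L1} with $\sigma_0>1$ to close the convolution estimates. The paper itself gives no further detail beyond naming exactly these ingredients, so there is nothing to compare.
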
 

Gevrey and Sobolev regularities can be related with the following two inequalities. 
\begin{itemize}
\item[(i)] For all $x \geq 0$, $\alpha > \beta \geq 0$, $C,\delta > 0$, 
\begin{align} 
e^{Cx^{\beta}} \leq e^{C\left(\frac{C}{\delta}\right)^{\frac{\beta}{\alpha - \beta}}} e^{\delta x^{\alpha}};  \label{ineq:IncExp}
\end{align}
\item[(ii)] For all $x \geq 0$, $\alpha,\sigma,\delta > 0$, 
\begin{align} 
e^{-\delta x^{\alpha}} \lesssim \frac{1}{\delta^{\frac{\sigma}{\alpha}} \jap{x}^{\sigma}}. \label{ineq:SobExp}
\end{align}
\end{itemize}
Together these inequalities show that for $\alpha > \beta \geq 0$, $\norm{f}_{\mathcal{G}^{\beta;C,\sigma}} \lesssim_{\alpha,\beta,C,\delta,\sigma} \norm{f}_{\mathcal{G}^{\alpha;\delta,0}}$. 

\bibliographystyle{plain} 
\bibliography{eulereqns_vlad}

\end{document}